\definecolor{refkey}{gray}{.75}
\definecolor{labelkey}{gray}{.5}
\colorlet{DarkGreen}{green!50!black}
\colorlet{DarkGray}{gray!60!black}
\numberwithin{equation}{section}
\renewcommand{\epsilon}{\varepsilon}
\newcommand{\given}{\;\big|\;}
\newcommand{\one}{\mathbf{1}}
 \definecolor{refkey}{gray}{.5}
 \definecolor{labelkey}{gray}{.5}
\definecolor{light}{gray}{.9}
\newtheorem{theorem}{Theorem}[section]
\newtheorem*{theorem*}{Theorem}
\newtheorem{lemma}[theorem]{Lemma}
\newtheorem{claim}[theorem]{Claim}
\newtheorem{proposition}[theorem]{Proposition}
\newtheorem{fact}[theorem]{Fact}
\newtheorem{corollary}[theorem]{Corollary}
\theoremstyle{definition}{

\newtheorem{definition}[theorem]{Definition}

\newtheorem*{definition*}{Definition}

\newtheorem{remark}[theorem]{Remark}
\newtheorem*{remark*}{Remark}

}
\newcommand{\E}{\mathbb E}
\renewcommand{\P}{\mathbb P}
\newcommand{\cA}{\ensuremath{\mathcal A}}
\newcommand{\cC}{\ensuremath{\mathcal C}}
\newcommand{\cD}{\ensuremath{\mathcal D}}
\newcommand{\cE}{\ensuremath{\mathcal E}}
\newcommand{\cF}{\ensuremath{\mathcal F}}
\newcommand{\cN}{\ensuremath{\mathcal N}}
\newcommand{\cP}{\ensuremath{\mathcal P}}
\newcommand{\cQ}{\ensuremath{\mathcal Q}}
\newcommand{\cR}{\ensuremath{\mathcal R}}
\newcommand{\cT}{\ensuremath{\mathcal T}}
\newcommand{\llb }{\llbracket}
\newcommand{\rrb }{\rrbracket}
\newcommand{\bh}{{\ensuremath{\vec{h}}}}
\newcommand{\bg}{{\ensuremath{\vec{g}}}}
\newcommand{\bm}{{\ensuremath{\vec{m}}}}
\newcommand{\bn}{{\ensuremath{\vec{n}}}}
\newcommand{\bS}{{\ensuremath{S}}}
\newcommand{\bG}{{\ensuremath{G}}}
\newcommand{\bH}{{\ensuremath{H}}}
\newcommand{\bA}{{\ensuremath{A}}}
\newcommand{\bSigma}{{\ensuremath{\Sigma}}}
 \renewcommand{\epsilon}{\varepsilon}
\DeclareMathOperator{\var}{Var}
\DeclareMathOperator{\cov}{Cov}
\DeclareMathOperator{\ber}{Ber}
\DeclareMathOperator{\Harm}{Harm}
\DeclareMathOperator{\erf}{erf}
\newcommand{\tv}{{\textsc{tv}}}
\newcommand{\superimpose}[2]{%
  {\ooalign{$#1\@firstoftwo#2$\cr\hfil$#1\@secondoftwo#2$\hfil\cr}}}
\newcommand{\sbullet}{%
  \hbox{\fontfamily{lmr}\fontsize{.4\dimexpr(\f@size pt)}{0}\selectfont\textbullet}}
\begin{document}

\title{Cutoff for the Glauber dynamics of the lattice free field}

\author{Shirshendu Ganguly}
\address{S.\ Ganguly\hfill\break
Department of Statistics \\ UC Berkeley }
\email{sganguly@berkeley.edu}
\author{Reza Gheissari}
\address{R.\ Gheissari\hfill\break
Departments of Statistics and EECS \\ UC Berkeley }
\email{gheissari@berkeley.edu}

\begin{abstract}
    The Gaussian Free Field (GFF) is a canonical random surface  in probability theory generalizing Brownian motion to higher dimensions. In two dimensions, it is critical in several senses, and is expected to be the universal scaling limit of a host of random surface models in statistical physics.
    It also arises naturally as the stationary solution to the stochastic heat equation with additive noise. Focusing on the dynamical aspects of the corresponding universality class, we study the mixing time, i.e., the rate of convergence to stationarity, for the canonical prelimiting object, namely the discrete Gaussian free field (DGFF), evolving along the (heat-bath) Glauber dynamics. 
    While there have been significant breakthroughs made in the study of cutoff for Glauber dynamics of random curves, analogous sharp mixing bounds for random surface evolutions have remained elusive.
    In this direction, we establish that on a box of side-length $n$ in $\mathbb Z^2$, when started out of equilibrium, the Glauber dynamics for the DGFF exhibit cutoff at time $\frac{2}{\pi^2}n^2 \log n$.  
\end{abstract}

\maketitle

\vspace{-.75cm}
{
  \hypersetup{linkcolor=black}
\setcounter{tocdepth}{1}
\tableofcontents
}

\vspace{-1cm}
\section{Introduction}
Stochastic interfaces and naturally associated dynamics are mathematical models for various natural phenomena. Classical examples include ballistic deposition, crystal growth, and the evolution of boundaries separating thermodynamic phases. Often, the fluctuation theory of the interface is expected to be governed by one of a few canonical stochastic partial differential equations (SPDEs). Key details of the models determine the relevant SPDE, and in turn the relevant universality class, and in particular the Gaussian  or non-Gaussian nature of the fluctuations.  

This paper is concerned with the dynamical evolution of the Gaussian Free Field (GFF), perhaps the most canonical model of a random surface. Just as Brownian motion (simply the GFF in 1D) is the universal scaling limit of diffusive random curves, the GFF in dimension $d = 2$ is expected to arise as the universal scaling limit of a large class of random surfaces in lattice statistical mechanics.
Before discussing examples of the latter and their associated dynamics that have been the object of intense investigation over the years, let us mention that there are various other perspectives to which the GFF is central including in conformal field theory, and in Coulomb gas theory and the study of log-correlated fields: see the surveys~\cite{Sheffield-GFF-notes,Biskup-notes,Berestycki-notes} for more on these angles. 

A canonical dynamical evolution of the GFF is best described by a \emph{height function} $h(x,t):\Lambda \times \mathbb R_+ \to \mathbb R$ for some domain $\Lambda \subset \mathbb R^d$, that is evolving stochastically with a local smoothening force; this is encoded by the stochastic heat equation with additive noise (SHE),
\begin{equation}\label{eq:she}
\partial_t h(x,t)=\Delta_x h(x,t)+\eta(x,t)\,,
\end{equation}
where the (spatial) Laplacian $\Delta_x$ is the local smoothening operator and $\eta$ is a space-time white noise. The stochastic heat equation is a widely studied SPDE, and it is a classical fact that the GFF is its stationary solution (see e.g. \cite{hairer-SPDE-notes}).

In two spatial dimensions, the SHE and its stationary solution, the GFF, form the central member of what is sometimes referred to as the Edwards--Wilkinson (EW) universality class of surface growth, named after a physical model for granular surface aggregation introduced in~\cite{Edwards-Wilkinson}.
The EW universality class is expected to encompass both the equilibrium and off-equilibrium fluctuations of a host of random surface models arising in statistical physics; 
by way of example, this class is expected to include the 
Ginzburg--Landau $\nabla \varphi$ (GL) interface model, the height functions of the dimer and six-vertex models, the solid-on-solid model at high temperatures, and the interface between phases in the 3D Ising model in its roughening regime.
In the case of the dimer model and the GL model the equilibrium fluctuations are known to converge to the GFF (e.g.,~\cite{Kenyon-dominoes-free-field} and~\cite{Miller-Ginzburg-landau} respectively), and in some of the other cases, it is at least known that the variance of the fluctuations is of the same order as that of the GFF~\cite{Frohlich-Spencer,log-variance-square-ice,DC-delocalization-six-vertex}.

In light of the above, it is of much interest to study dynamical aspects of this universality class. We will focus on the central lattice model identifying this class: the 2D \emph{discrete Gaussian free field} (DGFF). This is a discretization in space of the GFF, where $h(x,t)$ lives on $\Lambda_n\times \mathbb R_+$, where $\Lambda_n$ is a box of side-length $n$ in the integer lattice $\mathbb Z^2$. 
While there are various examples of natural discrete dynamic evolutions mimicking~\eqref{eq:she} with the Laplacian replaced  by the discrete Laplacian, and $\eta$ living on $\Lambda_n \times \mathbb R_+$, we will consider the heat-bath Glauber dynamics, also known as the Gibbs sampler, for the DGFF. This is the canonical local update Markov process where at any time, the height at a site of $\Lambda_n$ is updated according to~\eqref{eq:she}, i.e., taking value given by the average of its neighboring values plus an independent standard Gaussian. 

Of particular interest is the analysis of this dynamics when the surface is initialized far from equilibrium, say at an atypically high height of $n$. Under a macroscopic rescaling, the random surface evolution becomes non-random admitting a hydrodynamic limit to a non-linear parabolic PDE:
such results have been shown for certain dynamics of the form of~\eqref{eq:she} in~\cite{Funaki-Spohn,Nishikawa} for the GL model (of which the DGFF is a special case),  in~\cite{Laslier-Toninelli,Toninelli-ICM-review} for the height function of the dimer model, and in~\cite{LST-zero-temperature-2d-ising} for the zero-temperature Ising interface in $\mathbb Z^2$.
However, one cannot hope to extract information about convergence of the random process to equilibrium from the deterministic large-scale behavior alone. This finer resolution is the focus of this paper. 

\begin{figure}[t]
\begin{subfigure}[b]{.31\textwidth}
\begin{tikzpicture}[scale = 1]
\node at (0,0) {\includegraphics[width = 1.55in]{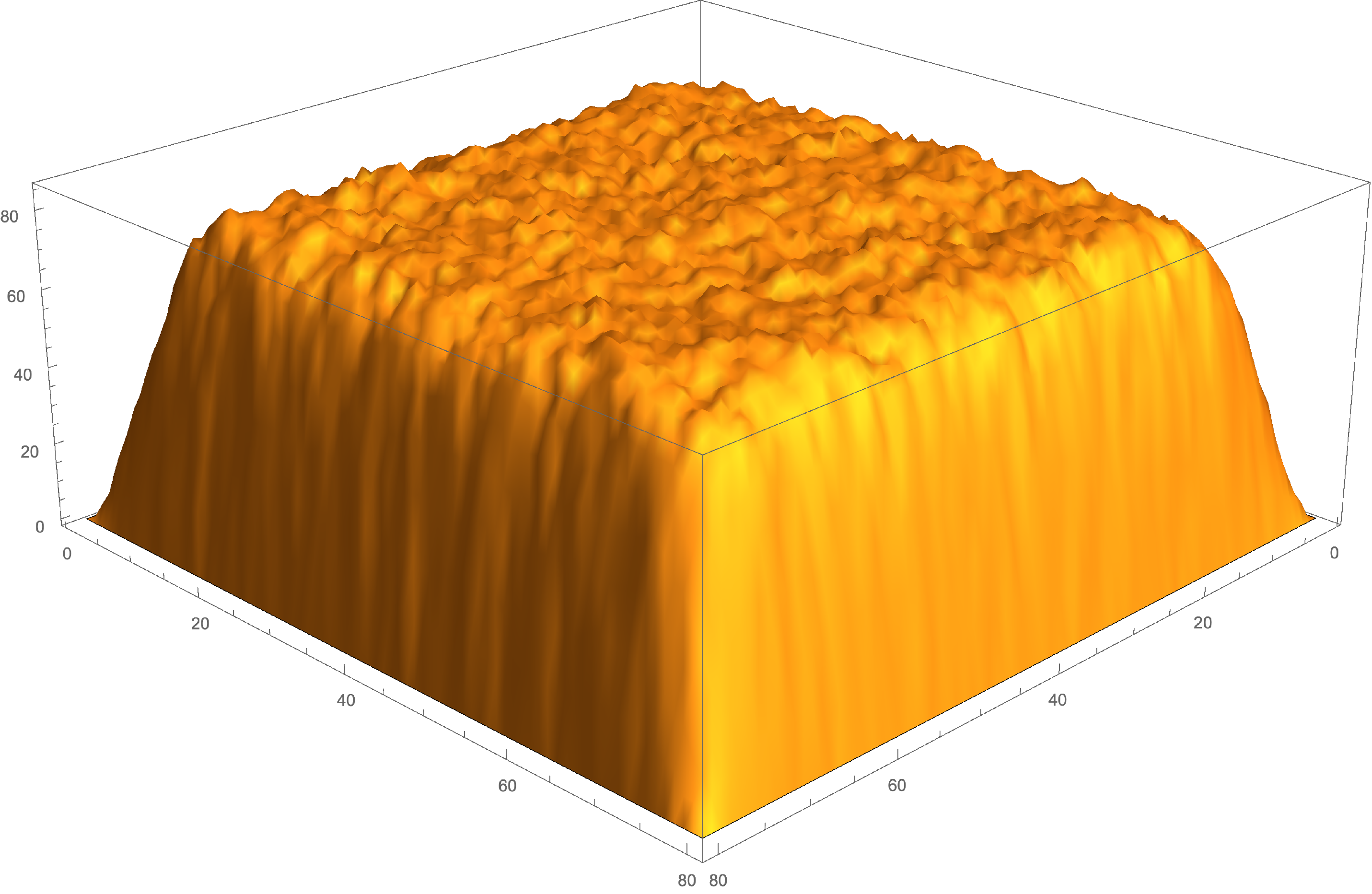}};
\node at (0,-3.5) {\includegraphics[width = 1.55in]{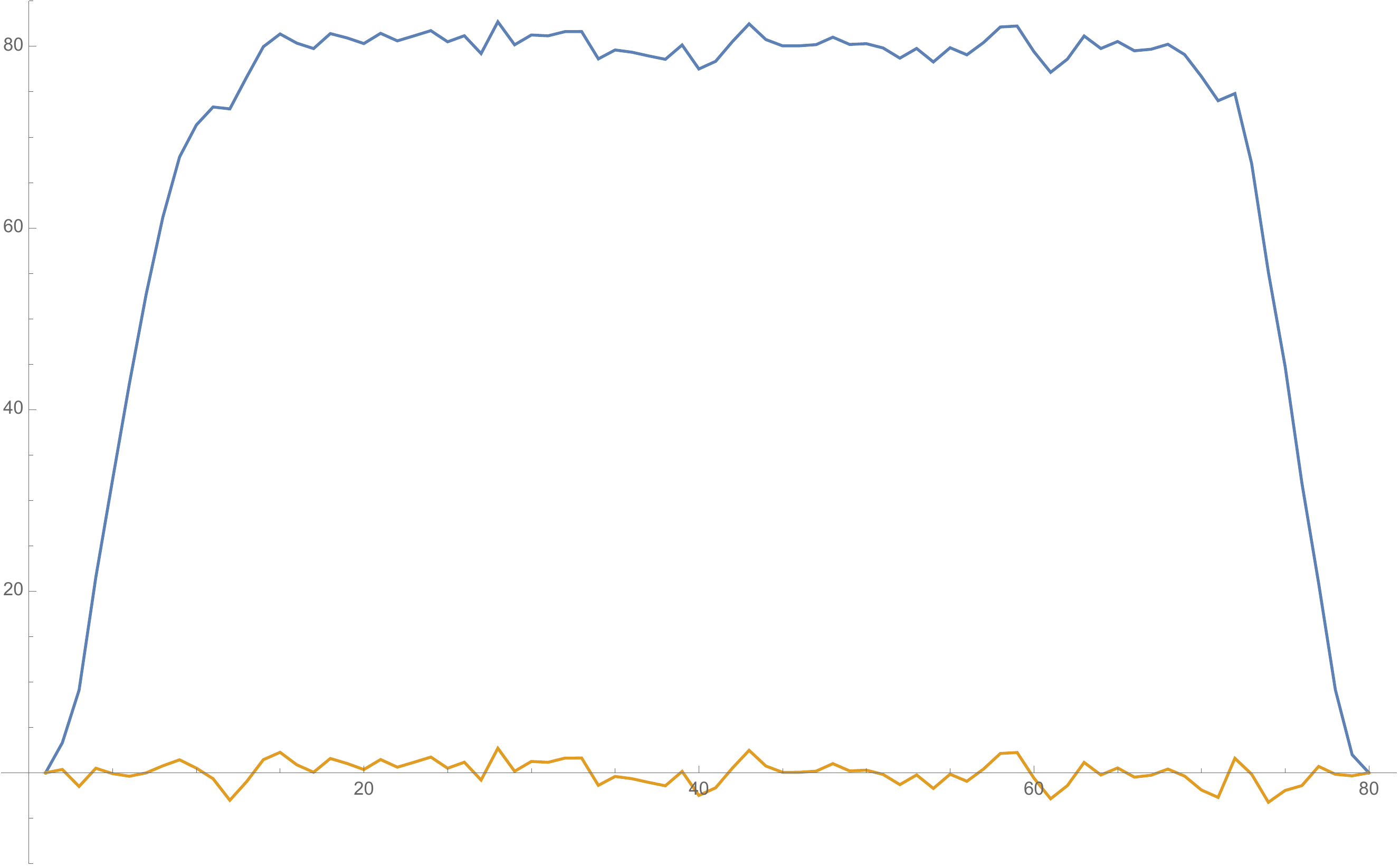}};
\end{tikzpicture}
\subcaption{$t=20$}
\end{subfigure}
\begin{subfigure}[b]{.31\textwidth}
\begin{tikzpicture}[scale = 1]
\node at (0,0) {\includegraphics[width = 1.55in]{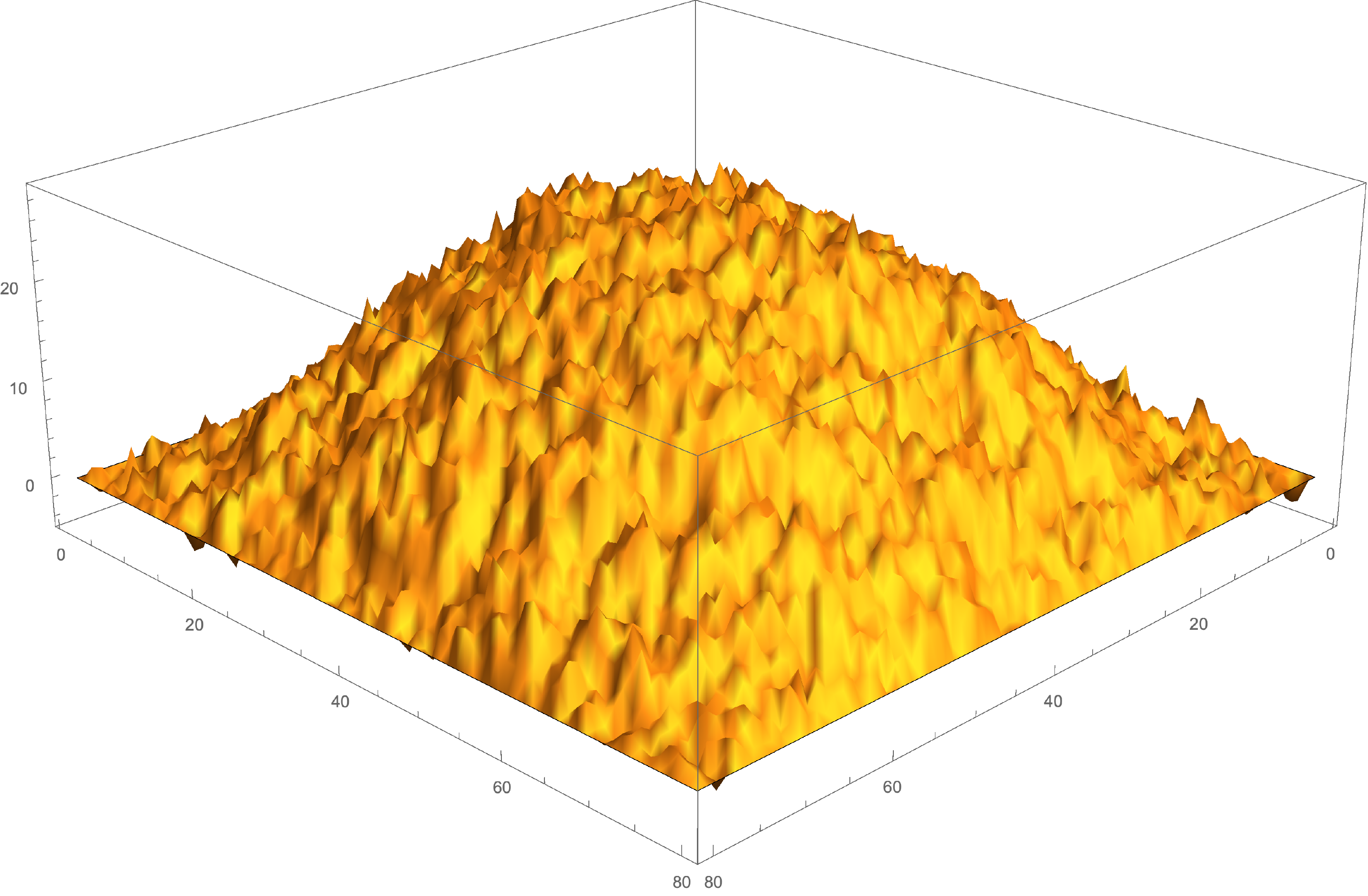}};
\node at (0,-3.5) {\includegraphics[width = 1.55in]{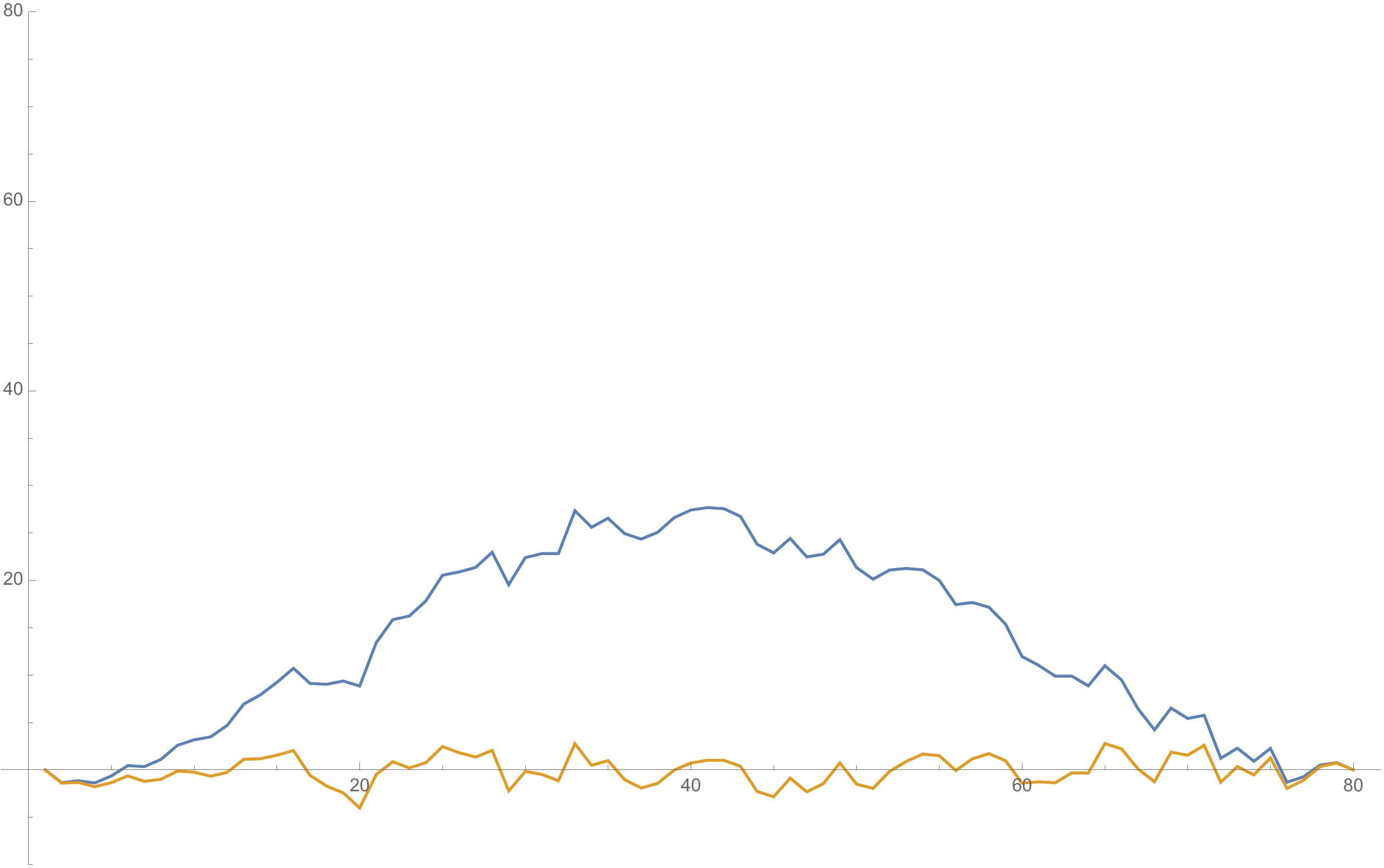}};
\end{tikzpicture}
\subcaption{$t=1000$}
\end{subfigure}
\begin{subfigure}[b]{.31\textwidth}
\begin{tikzpicture}[scale = 1]
\node at (0,0) {\includegraphics[width = 1.55in]{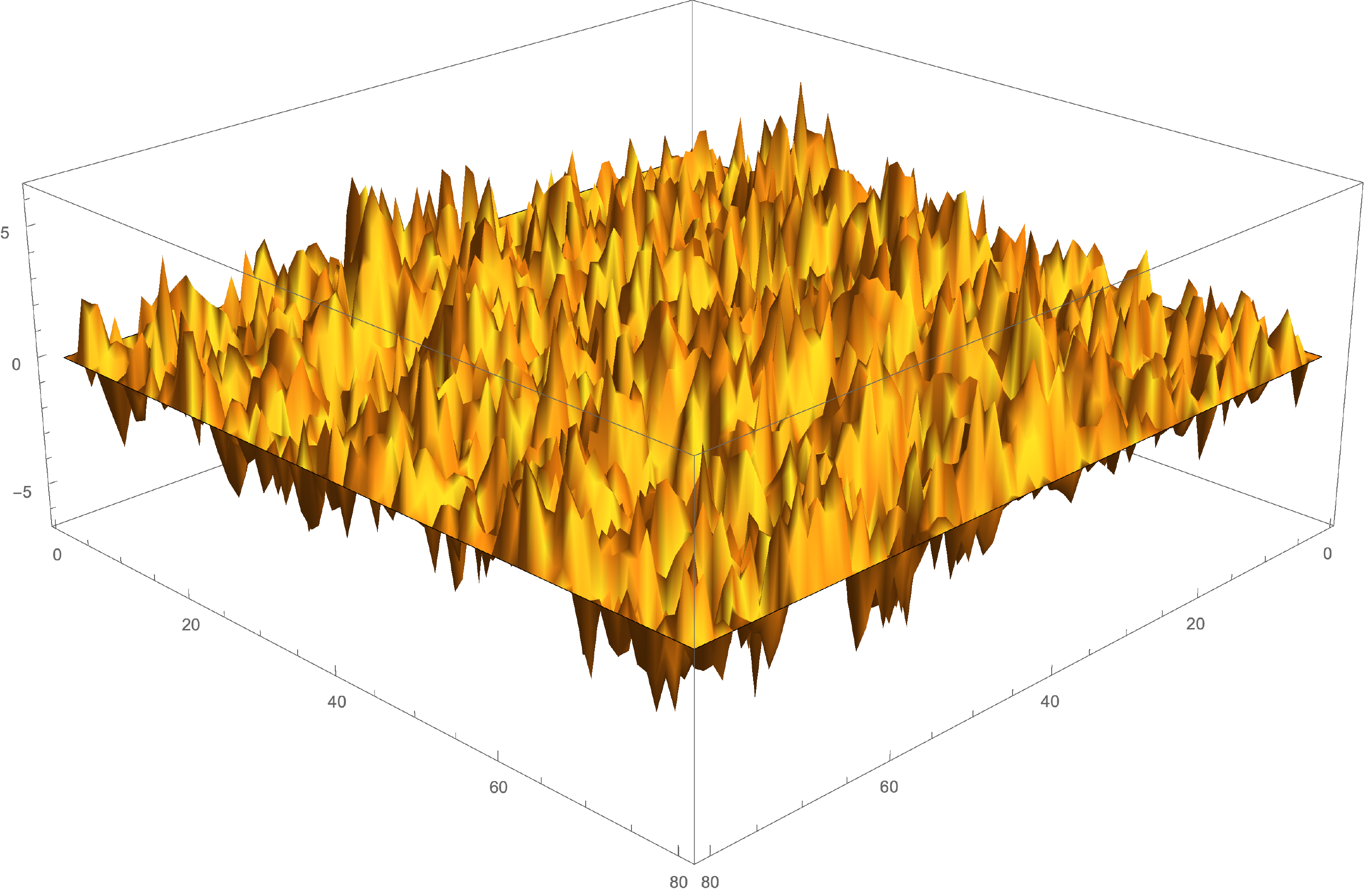}};
\node at (0,-3.5) {\includegraphics[width = 1.55in]{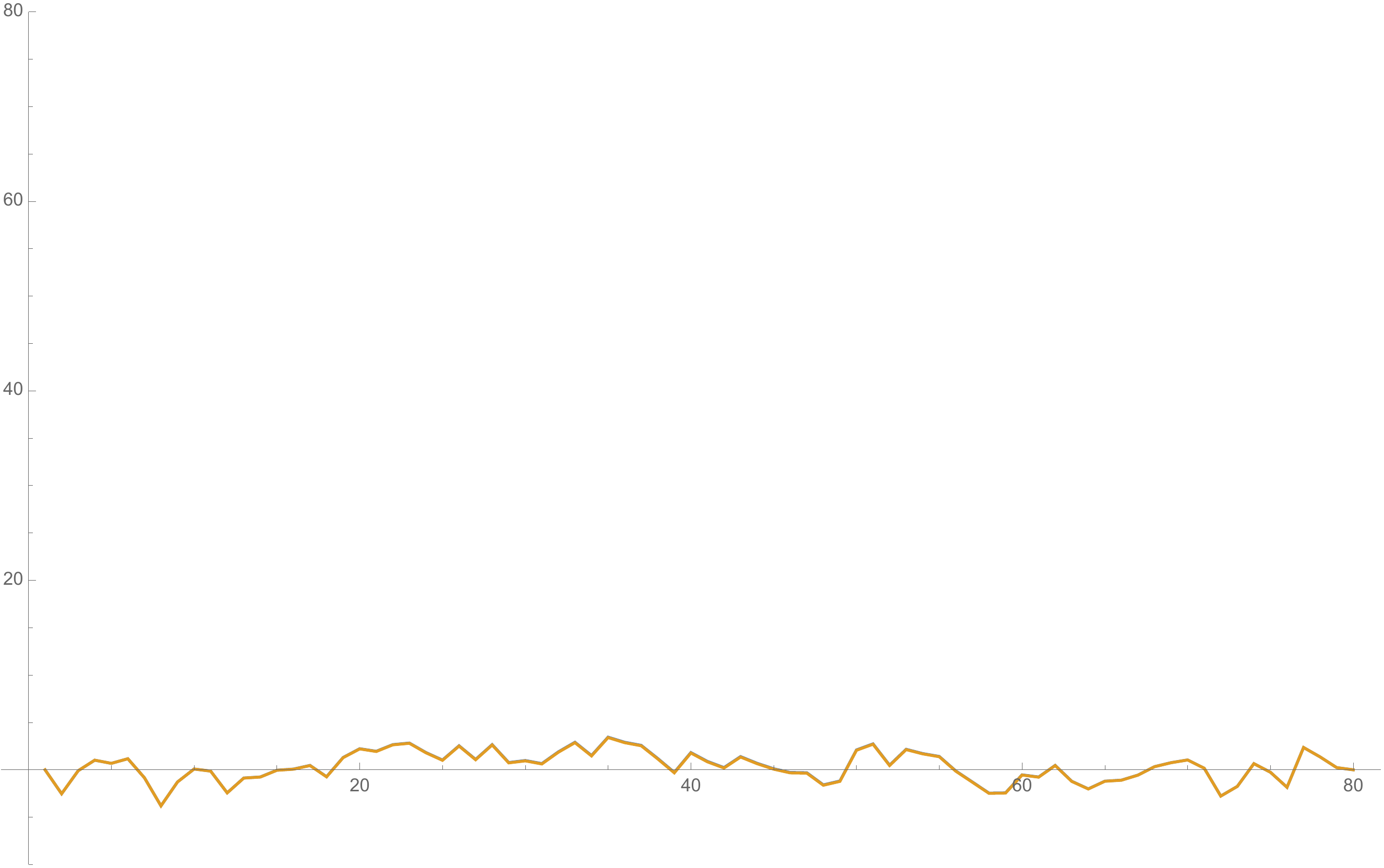}};
\end{tikzpicture}
\subcaption{$t=5000$}
\end{subfigure}
    \caption{Top: three snapshots of the DGFF Glauber dynamics on an $n\times n$ box with $n=80$, initialized from the constant surface at height $n$; after (continuous) time $1000$, it globally resembles a multiple of the top eigenvector of the discrete Laplacian, and evolves according to the mean-curvature flow. By time $5000$, its mean is zero, and its fluctuations appear equilibrated. Bottom: Cross-sections of the above surfaces along the $x=y$ diagonal (blue), together with the same cross-sections of a stationary DGFF evolution (orange). When $t=5000$, the two cross-sections are fully coupled, in agreement with the claim that the surface has equilibrated.}
    \label{fig:DGFF-Glauber-evolution}
\end{figure}

Setting up the right framework to study such questions naturally brings us into the world of mixing times of Markov processes. This field has been the subject of intense investigation over the last several decades, and is too large to do justice to in this introduction, so we simply mention that much attention has been paid to mixing times of Glauber dynamics for lattice statistical mechanics models, including some of the aforementioned models belonging to the EW universality class. 

Often in high-dimensional Markov chains, a remarkable phenomenon, known as \emph{cutoff}, occurs where the total-variation distance of the system to equilibrium dramatically drops from $1$ to $0$ in a window of time that is smaller order than the mixing time itself. First identified in seminal works for the random walk on the hypercube and the symmetric group (see~\cite{Aldous,Aldous-Diaconis,Diaconis-PNAS}), establishing cutoff and its location has become the ``best possible" result in the analysis of fast mixing Markov chains. We refer to the texts~\cite{Aldous-Fill,LP} for a detailed account of the literature on the cutoff phenomenon.

In our context,~\cite{Wilson} developed a general framework for showing that the mixing time for Glauber dynamics of random curves of length $n$ (i.e., $d=1$) following~\eqref{eq:she} is of the order of $n^2 \log n$. Subsequently,~\cite{Lacoin-adjacent-transposition} established cutoff at $\frac{1}{\pi^2} n^2\log n$ for the key example of the height function associated to the simple symmetric exclusion process on a line segment of length $n$. This has since been extended to a general theory of cutoff for dynamics of random curves following~\eqref{eq:she} in~\cite{CLL-nabla-phi}. When $d\ge 2$, however, the approaches followed in these papers face significant barriers, and sharp mixing time results for random surface evolutions are very limited. 

This state of affairs motivates the present work, 
where we consider the 2D DGFF and establish cutoff for its Glauber dynamics. To the best of our knowledge, this is the first cutoff result for a random surface Glauber dynamics in any dimension $d> 1$. Postponing further discussion, let us now formalize our main result. 

\subsection{Main result}
Let $\Lambda_n = \llb 1,n-1\rrb^2 = \{1,...,n-1\}^2$ be the $2$-dimensional lattice cube of side-length $n$ with nearest neighbor edges, and let $\partial \Lambda_n = \{v\in \mathbb Z^2 \setminus \Lambda_n: d(v,\Lambda_n) = 1\}$ be its (outer) boundary. Let $\overline \Lambda_n = \Lambda_n \cup \partial \Lambda_n = \llb 0,n\rrb^2$ and write $x\sim y$ if $d(x,y) =1$. 

For a fixed boundary condition $\eta: {\partial \Lambda_n} \to \mathbb R$, the DGFF on $\Lambda_n$ is the Gaussian process $\bh=(h(x))_{x\in \overline \Lambda_n}$ with $h(x)=\eta(x)$ for all $x\in \partial \Lambda_n,$ whose density $\pi_n$ is proportional to 
\[\exp\Big(-{\frac 1{8}}\sum_{x,y\in \overline \Lambda_n: x\sim y} (h(x)-h(y))^2\Big)\,.\]
For convenience take $\eta$ to be identically zero (to see that this does not lead to any loss of generality cf.\ Remark~\ref{rem:bc}), and view any $\bh$ taking value zero on $\partial \Lambda_n$ simply as a function on~$\Lambda_n$. 

The continuous-time Glauber dynamics for the DGFF, denoted $\bh(t) = (h(x,t))_{x\in \Lambda_n}$ is defined as follows. Assign each $x\in \Lambda_n$ a rate-1 Poisson clock; if the clock at $x$ rings at time $t$, leave $h$ unchanged everywhere except at $x$ where it updates to a unit variance Gaussian whose mean is equal to the average of $h(y,t)$ for $y\in \overline \Lambda_n: y\sim x$ (see Definition~\ref{def:surface-RW-coupling} for a formal definition). 

Next, we formally introduce the notions of mixing time and cutoff. 
Recall the total variation distance $\|\mu - \nu\|_\tv$ between probability measures $\mu$ and $\nu$ defined on the same space. Let $P_t \delta_{\bh(0)}$ be the law of $\bh(t)$, when initialized from some $\bh(0)$ taking value zero on $\partial \Lambda_n$. Note that in order for the mixing time (the time it takes for the total variation distance to $\pi_n$ to be less than $1/4$) to be finite, we need to restrict our attention to initializations of finite height; in analogy with interfaces from dimer and Ising models, a natural such choice is $\bh(0)$ such that ${\|\bh(0)\|}_\infty \le n$.

Define the (worst case) total-variation distance of the DGFF Glauber dynamics at time $t$ to be $$d_{\tv}^{(n)}(t):=\max_{\bh(0): \|\bh(0)\|_\infty \le n} \| P_t \delta_{\bh(0)} - \pi_n \|_\tv\,.$$

\begin{definition}\label{def:cutoff}
We say the Glauber dynamics for the DGFF on $\Lambda_n$ with initialization $\bh(0)$ such that $\|\bh(0)\|_\infty \le n$ undergoes cutoff at time $t_n$ with window $s_n = o(t_n)$ if 
\begin{align*}
    \lim_{\lambda\to\infty} \liminf_{n\to\infty}  d_{\tv}^{(n)}(t_n - \lambda s_n) = 1\,, \quad \mbox{and}\quad \lim_{\lambda \to\infty} \limsup_{n\to\infty} d_\tv^{(n)} (t_n + \lambda s_n) = 0\,.
\end{align*}
\end{definition}

With the above preparation, we now state our main result.

\begin{theorem}\label{thm:main-revised}
The Glauber dynamics for the DGFF on $\Lambda_n$ with initializations $\bh(0): \|\bh(0)\|_\infty \le n$ and zero boundary conditions, exhibits cutoff with window $O(n^2 \log \log n)$ at
\begin{align}\label{eq:t-star}
    t_\star &  :=
    \frac{2}{\pi^2} n^2 \log n\,.
\end{align}
\end{theorem}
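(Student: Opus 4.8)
The plan is to exploit the affine structure of the heat-bath dynamics. In the graphical representation---a rate-$1$ Poisson clock at every $x\in\Lambda_n$ and an i.i.d.\ family of standard Gaussians labelling its rings---run two copies $\bh(t),\bh'(t)$ from the same clocks and Gaussians; then $\bh(t)-\bh'(t)$ evolves with \emph{no noise}, each ring of $x$ resetting its value to the average of its neighbours. Writing $\Delta f(x)=\sum_{y\sim x}(f(y)-f(x))$ for the Dirichlet Laplacian on $\Lambda_n$, this says $\bh(t)=A_\omega\bh(0)+g_\omega$, where, conditionally on the clock/Gaussian data $\omega$, the matrix $A_\omega$ is a product of one-site averaging operators with $\E_\omega A_\omega=P_t:=e^{t\Delta/4}$ and $g_\omega$ is a centred Gaussian with covariance $C_\omega$. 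Thus $P_t\delta_{\bh(0)}=\int\cN(A_\omega\bh(0),C_\omega)\,d\P(\omega)$ is a \emph{mixture} of Gaussians (not itself Gaussian, already visibly so on a single site), while $\pi_n=\cN(0,\Sigma_\pi)$ with $\Sigma_\pi=4(-\Delta)^{-1}$. Here $-\Delta$ has eigenfunctions $\phi_{j,k}(x)=\sin(\pi jx_1/n)\sin(\pi kx_2/n)$, eigenvalues $\lambda_{j,k}=4\sin^2(\tfrac{\pi j}{2n})+4\sin^2(\tfrac{\pi k}{2n})$, and spectral gap $\lambda_1:=\lambda_{1,1}=\tfrac{2\pi^2}{n^2}(1+o(1))$; write $\phi_1:=\phi_{1,1}$. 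Were the marginal exactly $\cN(P_t\bh(0),\Sigma_\pi)$, the two-Gaussian formula would give $\|P_t\delta_{\bh(0)}-\pi_n\|_\tv=\erf\!\big(\tfrac1{2\sqrt2}\|\Sigma_\pi^{-1/2}P_t\bh(0)\|\big)$ with $\|\Sigma_\pi^{-1/2}P_t\bh(0)\|^2=\tfrac14\sum_{j,k}\lambda_{j,k}e^{-t\lambda_{j,k}/2}\langle\bh(0),\phi_{j,k}\rangle^2/\|\phi_{j,k}\|^2$; since $\|\bh(0)\|_\infty\le n$ forces each summand's initial factor to be $O(n^4)$, this is $O(n^2e^{-t\lambda_1/2})$ with the top mode dominating, hence of order $1$ precisely at $t_\star=\tfrac2{\pi^2}n^2\log n$, and $\asymp n^2e^{-t\lambda_1/2}$ when $\bh(0)\equiv n$ on $\Lambda_n$ (so that $\langle\bh(0),\phi_1\rangle\asymp n^3$). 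So the cutoff location, and a window of order $1/\lambda_1\asymp n^2$, would already follow; the extra $\log\log n$ is the cost of the marginal being only \emph{approximately} of this form.

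For the upper bound, fix $t=t_\star+\lambda s_n$ with $s_n=n^2\log\log n$, condition on $\omega$, bound $\|\cN(A_\omega\bh(0),C_\omega)-\pi_n\|_\tv$ via the Gaussian KL/TV comparison, and average over $\omega$; this reduces the claim to two statements holding with probability $1-o(1)$ in $\omega$: (i) covariance equilibration, $\|\Sigma_\pi^{-1/2}C_\omega\Sigma_\pi^{-1/2}-I\|_{\mathrm{HS}}=o(1)$; and (ii) control of the random mean, $\cE(A_\omega\bh(0))=o(1)$, where $\cE(f)=\langle f,(-\Delta)f\rangle=4\|\Sigma_\pi^{-1/2}f\|^2$ is the Dirichlet energy. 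Part (i) is comparatively soft: from $\E_\omega C_\omega=\Sigma_\pi-\E_\omega[A_\omega\Sigma_\pi A_\omega^{\mathsf T}]$, self-adjointness of the averaged flow, $\sum_{j,k}e^{-ct\lambda_{j,k}}=o(1)$ for $t\gg n^2$, and a concentration estimate for the Poissonian update schedule, it holds well before $t_\star$. Part (ii) is the crux, and is where a naive argument loses a multiplicative constant: the generator identity $\cL\cE(\bh)=-\tfrac14\|\Delta\bh\|^2\le-\tfrac{\lambda_1}{4}\cE(\bh)$ gives only $\E_\omega\cE(A_\omega\bh(0))\le e^{-t\lambda_1/4}\cE(\bh(0))=e^{-t\lambda_1/4}O(n^4)$, which is $o(1)$ only for $t\gtrsim 4t_\star$. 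To recover the sharp constant one must separate the coherent part $P_t\bh(0)$---whose energy decays at the faster rate $e^{-t\lambda_1/2}$ and is $o(1)$ already once $t-t_\star\gg n^2$---from the genuine fluctuation, and bound $\E_\omega\cE(A_\omega\bh(0)-P_t\bh(0))$ sharply. I would do this through the martingale $M_t:=e^{-t\Delta/4}A_{\omega_t}\bh(0)$ (which has $\E M_t=\bh(0)$): estimating its predictable quadratic variation mode by mode should show that in each low-frequency direction still carrying signal near $t_\star$ the fluctuation stays of strictly smaller order than the signal, and that $\E_\omega\cE(A_\omega\bh(0)-P_t\bh(0))$ falls below any fixed $\epsilon$ once $t\ge t_\star+C_\epsilon n^2\log\log n$; doing this uniformly over the $\Theta(n^2)$ modes, via a dyadic partition of the frequency range, is what forces the $\log\log n$ window. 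I expect this mode-resolved second-moment analysis of the random operator $A_\omega$ against the deterministic heat flow $P_t$ to be the main obstacle: soft bounds only place the mixing time near $4t_\star$, so extracting the exact constant $\tfrac2{\pi^2}$ genuinely requires it.

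For the lower bound, take the same worst-case $\bh(0)$ and the statistic $W_t:=\langle\bh(t),\phi_1\rangle$. Under $\pi_n$, $W\sim\cN(0,4\|\phi_1\|^2/\lambda_1)$ with standard deviation $\asymp n^2$; under $P_t\delta_{\bh(0)}$, $\E[W_t]=e^{-t\lambda_1/4}\langle\bh(0),\phi_1\rangle\asymp n^3e^{-t\lambda_1/4}$ exactly (as $\E_\omega A_\omega=P_t$), and $\var(W_t)=\E_\omega\langle\phi_1,C_\omega\phi_1\rangle+\var_\omega(\langle A_\omega\bh(0),\phi_1\rangle)$, whose first term is $\le\var_{\pi_n}(W)=O(n^4)$ and whose second term is $O(n^4)$ by the same fluctuation estimates used for (ii). At $t=t_\star-\lambda s_n$ one has $\E[W_t]\asymp n^2(\log n)^{\pi^2\lambda/2}$, which dwarfs every relevant standard deviation, so Chebyshev on $W_t$ separates the two laws and $d_\tv^{(n)}(t_\star-\lambda s_n)\to1$. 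Combined with the upper bound this is cutoff at $t_\star=\tfrac2{\pi^2}n^2\log n$ with window $O(n^2\log\log n)$; one should also record the harmless reduction to zero boundary data and check the legitimacy of the Gaussian KL/TV comparisons in light of (i).
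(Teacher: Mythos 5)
Your lower bound is essentially the paper's: project onto the top eigenvector $\varphi_\one$ and exploit the Gaussian (conditionally on the clocks) law of that coordinate. The paper starts from a deterministic shift of $\pi_n$ rather than $\bh(0)\equiv n$ so that the conditional covariance of $\langle\varphi_\one,\bh(t)\rangle$ is \emph{exactly} $\lambda_\one^{-1}$, which yields the full cutoff profile; your Chebyshev version gives the cutoff lower bound but not the profile, and is otherwise fine.

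Your upper bound, however, proceeds along precisely the route the paper names and sets aside as an open difficulty in Remark~\ref{rem:tv-Gaussians}: condition on $\omega=\cP$, view $\bh^\cP(t)\sim\cN(m^\cP_t,C^\cP_t)$, use the two-Gaussian TV identity, and reduce to showing that the Dirichlet energy of the conditional mean, $(m^\cP_t)^{\mathsf T}(-\Delta)m^\cP_t$, is $o(1)$ for typical $\cP$ at $t\ge t_\star+O(n^2\log\log n)$. You identify your item~(ii) — bounding $\E_\omega\cE\bigl(A_\omega\bh(0)-P_t\bh(0)\bigr)$ via a martingale/frequency decomposition — as ``the main obstacle,'' but this is not a technicality that dyadic bookkeeping over modes can absorb; it is exactly the step the authors could not carry out. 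The issue is that $m^\cP_t(x)$ is a survival probability of the BTRW from $x$ in the fixed field $\cP$, and neighboring sites $x\sim y$ launch walks whose jump counts differ by $\Theta(\sqrt t)\approx n\sqrt{\log n}$. A naive decoupling of jump count from trajectory makes $|m^\cP_t(x)-m^\cP_t(y)|$ of relative size $\lambda_\one\sqrt t\approx n^{-1}\sqrt{\log n}$ at $t_\star$; multiplying back by $\|\bh(0)\|_\infty=n$ and summing over $\Theta(n^2)$ edges gives Dirichlet energy of order $\log n$, not $o(1)$. The oscillations could in principle align with high Laplacian modes, and the correlations that would suppress this are not available without a genuinely new argument. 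Moreover, your step~(i) is also not quite as soft as stated: $\Sigma_\pi-C^\cP_t=\bH^\cP_t\,G\,(\bH^\cP_t)^{\mathsf T}$ (Proposition~\ref{prop:monotonicity-of-mean-variance}), whose Hilbert–Schmidt-against-$\Sigma_\pi^{-1}$ norm is again controlled by the same oscillatory heat-kernel quantities, so (i) and (ii) are coupled.

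The paper's actual upper bound avoids the Gaussian-TV formula entirely. It uses the BTRW representation only for mean/regularity estimates (Proposition~\ref{prop:mean-of-h}, Lemma~\ref{lem:max-bound}) and then proves coalescence of a \emph{two-stage monotone coupling} (Definition~\ref{def:two-stage-coupling}): the identity coupling for time $t_\star+O(n^2\log\log n)$ to drive the volume supermartingale $V_t=\sum_x\bigl(h^\bn(x,t)-h^\bg(x,t)\bigr)$ down to $n^2(\log n)^{-5}$, followed by a sticky coupling together with a multi-scale quadratic-variation argument (Proposition~\ref{prop:angle-bracket-lower-bound}, Lemma~\ref{lem:two-norm-to-one-norm}) to kill $V_t$ in a further $O(n^2\log\log n)$. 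The key new device there, needed because $d=2$, is the ``zero-one'' nature of sticky-coupling discrepancies (Lemma~\ref{lem:domination-of-bernoullis}), which lets one compare the angle bracket to $V_t/\log n$ rather than the fatal $V_t^2/n^2$. So: different approach, and the piece you flag as the obstacle is indeed the reason the paper does not go your way.
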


\begin{remark}\label{rem:general-initializations}
    We chose the total-variation to be maximized over $\|\bh(0)\|_\infty \le n$ for concreteness, but if we chose to instead maximize over $\|\bh(0)\|_\infty \le a_n$ for any sequence $a_n\ge 0$, then we would find that the mixing time maximized over such initializations is $t_\star(a_n) + O(n^2 \log \log n)$, where  
    \begin{align}\label{eq:t-star-an}
        t_\star(a_n) := \frac{2}{\pi^2} n^2 \log a_n\,.
    \end{align}
    This implies the cutoff phenomenon if $\frac{\log a_n}{\log \log n} \to \infty$.  
    
    If one focuses on the special case of the flat initialization $\bh(0)\equiv 0$, then we deduce that its mixing time is in fact $O(n^2 \log \log n)$; this specific initialization would be expected to mix in time $O(n^2)$ with no cutoff, but even establishing a quantitative $o(n^2 \log n)$ bound on its mixing time is challenging in $d=1$.   
\end{remark}

For the total-variation lower bounds in the above results, we can go further and obtain sharp lower bounds for the cutoff profile by leveraging the Gaussian nature of the process. This cutoff profile is expected to be sharp, but we are unable to get a matching upper bound due to serious technical obstructions: see Section~\ref{subsec:proof-ideas} and Remark~\ref{rem:tv-Gaussians} in particular for more.  

In the below, let $\erf(x)$ be the error function, i.e., the probability that a mean-zero Gaussian with variance $1/2$ falls in the range $[-x,x]$. 

\begin{theorem}\label{thm:main-lower-bound}
    The Glauber dynamics for the DGFF on $\Lambda_n$ with initializations $\bh(0)$ having $\|\bh(0)\|_\infty\le a_n$ satisfying $\frac{a_n}{\log n} \to\infty$ and zero boundary conditions satisfies 
    \begin{align*}
        \liminf_{n\to\infty} d_\tv^{(n)}(t_\star(a_n) + sn^2) \ge \erf\Big( \frac{2}{\pi} e^{ - \pi^2 s/2}\Big)\,.
    \end{align*}
\end{theorem}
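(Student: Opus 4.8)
The plan is to exploit the Gaussianity of the Glauber dynamics: the DGFF heat-bath update is an affine-linear operation in the current configuration plus independent Gaussian noise, so $\bh(t)$ started from any deterministic $\bh(0)$ is an (inhomogeneous) Gaussian vector whose mean evolves by the (random-in-time but ultimately averaging out to) discrete heat semigroup, while its covariance converges to that of $\pi_n$. Concretely, the key observation is that projecting onto the top eigenvector $\phi_1$ of the discrete Laplacian on $\Lambda_n$ turns the whole surface dynamics into a one-dimensional Ornstein--Uhlenbeck-type process: writing $M_1(t) = \langle \bh(t), \phi_1\rangle$, the mean of $M_1(t)$ contracts like $e^{-\gamma_1 t}$ where $\gamma_1$ is the spectral gap of (minus) the discrete Laplacian, and $\gamma_1 = (1-\cos(\pi/n)) \sim \frac{\pi^2}{2n^2}$ for the continuous-time Glauber dynamics as normalized here. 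This is exactly the $\frac{2}{\pi^2}n^2\log a_n$ scale: choosing $\bh(0)$ to be (a suitable rescaling of) $\phi_1$ with $\|\bh(0)\|_\infty \le a_n$ gives $|\E M_1(0)|$ of order $a_n \cdot \|\phi_1\|_2$, and $|\E M_1(t_\star(a_n) + sn^2)|$ of order $e^{-\gamma_1 sn^2} \asymp e^{-\pi^2 s/2}$ times an $O(1)$ constant once the $\log a_n$ has been used up.

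\emph{Step 1: reduce to a single Gaussian coordinate.} I would first justify that for any initialization $\bh(0)$, the random variable $M_1(t)$ is Gaussian with an explicitly computable mean $m_1(t)$ and a variance $v_1(t)$ that is monotone in $t$ and converges to the stationary value $v_1(\infty) = \var_{\pi_n}(\langle \bh,\phi_1\rangle)$. The point is that the projection onto a single Laplacian eigenvector decouples: the drift of $M_1(t)$ depends only on $M_1(t)$ itself (with coefficient $-\gamma_1$) plus the projected white noise, so $(M_1(t))_{t\ge 0}$ is genuinely a 1D OU process and all of this is exact, not asymptotic. Then $\|P_t\delta_{\bh(0)} - \pi_n\|_\tv \ge \|\mathcal N(m_1(t), v_1(t)) - \mathcal N(0, v_1(\infty))\|_\tv$ by the data-processing (contraction) inequality for total variation under the map $\bh \mapsto \langle \bh, \phi_1\rangle$.

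\emph{Step 2: estimate the one-dimensional TV distance.} With $v_1(t) \le v_1(\infty) = \Theta(1)$ (the stationary variance of the top mode is order one in these units) and $v_1(t) \to v_1(\infty)$, the total-variation distance between $\mathcal N(m_1(t), v_1(t))$ and $\mathcal N(0, v_1(\infty))$ is, up to the negligible variance discrepancy, the TV distance between two unit-type Gaussians whose means differ by $m_1(t)$; this equals $\erf(|m_1(t)|/(2\sqrt{v_1(\infty)}) \cdot(1+o(1)))$ by the standard formula $\|\mathcal N(\mu_1,\sigma^2)-\mathcal N(\mu_2,\sigma^2)\|_\tv = \erf(|\mu_1-\mu_2|/(2\sigma\sqrt 2))$ (matching the paper's normalization of $\erf$ as the mass of a variance-$1/2$ Gaussian on $[-x,x]$). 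Then I compute $m_1(t_\star(a_n)+sn^2)$: choosing $\bh(0) = a_n \phi_1/\|\phi_1\|_\infty$, one gets $m_1(0) = a_n \langle \phi_1,\phi_1\rangle/\|\phi_1\|_\infty$, and $m_1(t) = e^{-\gamma_1 t} m_1(0)$, so after plugging in $\gamma_1 = 1-\cos(\pi/n)$ and $t = \frac{2}{\pi^2}n^2\log a_n + sn^2$ and using $\gamma_1 n^2 \to \pi^2/2$, the $a_n$ factors cancel and $|m_1(t)| \to c \, e^{-\pi^2 s/2}$ for an explicit constant $c$. A short computation of $v_1(\infty)$ (the stationary second moment of the top Fourier mode of the DGFF, which is $\sum$-free: $v_1(\infty) = 2/\gamma_1$ in the right units, up to the precise conventions) pins down the constant so that $|m_1(t)|/(2\sqrt{2 v_1(\infty)}) \to \frac{2}{\pi}e^{-\pi^2 s/2}$, yielding the claimed bound.

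\emph{Main obstacle.} The conceptual content is easy; the delicate part is bookkeeping the exact constants — the normalization of the Glauber update (the $\frac18$ in the density versus the ``average plus unit Gaussian'' rule), the precise eigenvalue $1-\cos(\pi/n)$ versus its continuum proxy $\pi^2/n^2$, the $\ell^2$-versus-$\ell^\infty$ normalization of $\phi_1(x) = \prod_i \sin(\pi x_i/n)$, and the stationary variance $v_1(\infty)$ — so that the final coefficient comes out to exactly $\frac{2}{\pi}$ and not some other multiple. One must also verify that the initialization $\bh(0) \propto \phi_1$ indeed satisfies $\|\bh(0)\|_\infty \le a_n$ (immediate) and, to get the $\liminf$ rather than a limit along subsequences, that $v_1(t)$ has converged to within $o(1)$ of $v_1(\infty)$ by time $t_\star(a_n)$, which holds since the variance equilibrates on the $O(n^2)$ timescale $\ll t_\star(a_n)$ whenever $a_n/\log n \to \infty$ — this last point is exactly where the hypothesis $a_n/\log n\to\infty$ is used, to guarantee the mean is still $\Theta(1)$ (not yet decayed) at a time by which the variance is already essentially stationary.
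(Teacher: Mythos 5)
Your key claim in Step~1 — that $(M_1(t))_{t\ge 0}$ with $M_1(t)=\langle \bh(t),\varphi_\one\rangle$ is ``genuinely a 1D OU process and all of this is exact'' — is false, and the proposal does not recover once this is removed. The heat-bath Glauber dynamics does \emph{not} tensorize in the eigenbasis of $\Delta_n$: the paper itself stresses this point in Section~\ref{subsec:related-questions}, noting that the Fourier modes evolve as independent OU processes only for a Langevin diffusion, not for single-site Glauber updates. Concretely, when the clock rings at a site $x$, the increment of $M_1$ is $\varphi_\one(x)\big[\frac14\sum_{y\sim x}h(y,t)-h(x,t)+Z_x\big]$, which depends on the full configuration $\bh(t)$ through $(\Delta h)(x,t)$, not on $M_1(t)$ alone. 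While it is true that the \emph{unconditional} mean satisfies $\E[M_1(t)]=e^{-\lambda_\one t}\langle\varphi_\one,\bh(0)\rangle$, the process $M_1(t)$ is not Markovian, and — more importantly for your argument — it is not unconditionally Gaussian: $\bh(t)$ is Gaussian only \emph{conditionally} on the Poisson update sequence $\cP$ (Proposition~\ref{prop:mean-and-variance-of-h}), so $M_1(t)$ is a $\cP$-mixture of Gaussians with $\cP$-dependent means $\langle\varphi_\one,\bm^\cP_t\rangle$ and $\cP$-dependent variances $\langle\varphi_\one,\bSigma^\cP_t\varphi_\one\rangle$. Your Step~2 treats $M_1(t)$ as a single Gaussian with deterministic $v_1(t)$, which is not justified.

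The actual proof gets around exactly this obstacle by working quenched. It conditions on $\cP$, restricts to a typical event $\Gamma$ on which the quenched survival estimate (Proposition~\ref{prop:mean-of-h}) controls $\langle\varphi_\one,\bm^\cP_t\rangle$, uses the reverse triangle inequality~\eqref{eq:tv-distance-lower-bound} to compare the quenched TV distances across $\cP,\cP'\in\Gamma$, and only then invokes the 1D Gaussian formula. Crucially, it also uses a different initialization — a deterministic shift $\vec l$ plus a stationary sample — so that by Lemma~\ref{lem:stationary-with-shift} the quenched law is exactly $\cN(\vec f^\cP(t),G)$ with covariance frozen at the stationary $G$ for all $t$. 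Your choice of a deterministic $\bh(0)\propto\varphi_\one$ would bring the transient covariance $\bSigma^\cP_t$ into play, and you would further need to show $\langle\varphi_\one,(G-\bSigma^\cP_t)\varphi_\one\rangle=\|G^{1/2}(\bH^\cP_t)^{\mathsf T}\varphi_\one\|^2$ is $o(\lambda_\one^{-1})$ \emph{quenched} for typical $\cP$; that is a nontrivial additional estimate that your proposal does not address (and ``$v_1(t)$ equilibrates on the $O(n^2)$ timescale'' is too coarse, since the quantity at issue is a $\cP$-dependent object whose fluctuations must be controlled). Your intuition about the test function $\varphi_\one$, about $\lambda_\one\sim\pi^2/(2n^2)$ as the contraction rate, and about the final $\erf(\frac{2}{\pi}e^{-\pi^2 s/2})$ coming from a 1D Gaussian TV computation is all correct; but the structural step you call ``conceptually easy'' (that the projection is an exact OU process) is precisely where the genuine difficulty lies, and it is false.
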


\begin{remark}\label{rem:bc}
Though we focused on the case of zero boundary data, our results hold for general Dirichlet boundary data $\eta$. In fact the measure $\pi_n$ as well as the Glauber dynamics with boundary condition $\eta$ is obtained from the one with zero boundary conditions by a deterministic $\eta$-dependent translation. 
See Lemma~\ref{lem:boundary-conditions} for the formal statement. 

Moreover, though our results were formulated for the continuous-time Glauber dynamics, it will be clear in the proofs that we could just as well have written them for the discrete-time Glauber dynamics, and the cutoff location would become $n^2 t_\star$ with $O(n^4 \log \log n)$ window. 
\end{remark}

\subsection{Related work}\label{subsec:related-work}
Before describing the key ideas in the paper,  we briefly recount related cutoff results in one dimension, and the more limited results on random surface dynamics in higher dimensions. The most basic 1D interface model is a $\pm 1$ random walk path pinned at height zero at points $0$ and $n$. In the associated Glauber dynamics, a local minimum $\vee$ (resp. local maximum~$\wedge$) at a point $x\in \llb 1,n-1\rrb$  flips with probability $1/2$ at the corresponding clock rings.

This seemingly innocuous dynamics encodes many apparently more complicated Markov chains including the adjacent transposition walk on the symmetric group, and the symmetric exclusion process on $\llb 1,n-1\rrb$. The famous work of Wilson~\cite{Wilson} gave upper and lower bounds of matching order $n^2 \log n$ for the mixing time of these processes; a key contribution of that work was noticing that the expected value of the height at a point evolves according to the discrete heat equation, whose spectral theory on the line segment is of course well-understood. 

The bounds of~\cite{Wilson} remained the state of the art until the breakthrough work of Lacoin~\cite{Lacoin-adjacent-transposition} establishing cutoff for the above-described dynamics. That proof relied on a delicate combination of tools from the analysis of monotone Markov chains, together with martingale techniques. 
Related ideas and refinements have been subsequently developed in e.g.,~\cite{Lacoin-diffusive-window,CLL-mixing-simplex} leading to a robust theory of cutoff for a large class of 1D interface models in the universality class of~\eqref{eq:she}. In its most general form~\cite{CLL-nabla-phi}, cutoff at $\frac 1{\pi^2} n^2 \log n$ was established for Glauber dynamics of 1D interface models with Gibbs distribution $\exp({ -V(|\nabla h|)})$ for convex $V$. The DGFF in dimension one fits directly into this framework via the choice $V(x) = x^2/4$.

Moving on to higher dimensions, arguments similar to those of~\cite{Wilson} can be used to see that the mixing time of the DGFF Glauber dynamics is of order $n^2 \log n$. A straightforward generalization of the argument in~\cite{CLL-nabla-phi} (which itself extended the discrete argument from~\cite{Wilson} to a continuum setup) would allow one to identify the spectral gap of the DGFF dynamics as $1-\cos(\pi/n)$. However, a key step in all proofs of cutoff from~\cite{Lacoin-adjacent-transposition,CLL-mixing-simplex,CLL-nabla-phi} breaks down as soon as $d\ne 1$. Namely, all those proofs relied on fast mixing on an $O(1)$-sized skeleton to split the segment into a finite number of smaller scale segments between the points of the skeleton, which mix independently. Martingale techniques based on connections to the discrete heat equation can be combined with monotonicity arguments to obtain useful \emph{one-point} mixing time bounds, sufficient for the mixing time of the skeleton provided the skeleton size is $O(1)$. However this will only be true in dimension one, and any such approach thus runs into a serious barrier when $d\ge 2$.

For random surfaces whose heights take integer values, the situation in $d\ge 2$ is even worse, as the expected height no longer follows the discrete heat equation due to integer effects. 
A general methodology was used in~\cite{CPT-monotone-surfaces} to prove $n^2 (\log n)^C$ bounds on mixing times of random surfaces by trying to mimic the mean-curvature flow evolution of the surface, but this required a key local input bounding the mixing time from initializations whose heights are on the same scale as the fluctuations. Such an input is only available in a few settings, notably the zero-temperature Ising interface, and the height function of the dimer model~\cite{CMST-zero-temperature-3d-ising}. We also mention recent progress by~\cite{BaBo} establishing the order $n^{-2}$ spectral gap for a hierarchical approximation to the discrete Gaussian model (the DGFF constrained to take integer valued heights). For many other important models of random surfaces like the solid-on-solid model, and the low-temperature Ising interface in three dimensions, there do not even exist sub-exponential bounds on the mixing time. 

\subsection{Idea of the proof}\label{subsec:proof-ideas}
Here, we describe our approach to proving Theorem~\ref{thm:main-revised}. 

\subsubsection{Backwards-in-time random walk representation}
The starting point of our proof is the observation that the Glauber dynamics for the DGFF admits an exact representation in terms of backwards-in-time random walks on $\Lambda_n$ killed at $\partial \Lambda_n$. This can be viewed as a discrete version of the Feynman--Kac representation for the SHE. A similar graphical representation was used to prove ergodicity and bound covariance growth in infinite volume in~\cite{FerrariNiederhauser}, and a synchronous version of this appeared in a recent work of Chatterjee~\cite{ChatterjeeRandom} for more general random surface evolutions. 

Formally, the random walk representation goes as follows. First consider the following standard graphical construction of the Glauber dynamics $(h(x,t))_{x\in \Lambda_n, t\ge 0}$. Begin by sampling a space-time Poisson process $\cP$ on $\Lambda_{n}\times [0,\infty]$. Further for each point $(x,t)\in \cP$ let there be an associated standard Gaussian $Z(x,t).$  Given this, the conditional (on $\cP$) process $(h^\cP(x,t))_{x,t}$ is constructed by at time $(x,t)\in \cP$, updating the value at $x$ to $\frac{1}{4}\sum_{y\sim x} h(y,t^-) + Z(x,t)$. 

Now for any fixed $t$, we can define the random walk started at $(x,t)$ going back in time, that whenever it encounters a point in $(y,s)\in\cP$ jumps to a uniformly random neighbor of $y$, and gets frozen (killed) when it hits $\partial \Lambda_n$.
For any such random walk trajectory $(S^{x,\cP}_s)_{s\in [0,t]}$ initialized at $S^{x,\cP}_t=x$, let $Z_t(S_{[0,t]}^{x,\cP})$ be the sum of the Gaussian variables $Z(y,s)$ encountered along the trajectory. 
The random walk representation of $(h^\cP(x,t))_{x,t}$ is the following exact equality: for any $t>0$ and any realization of $\cP$,
$$h^\cP(x,t)=\E\big[Z_t(S_{[0,t]}^{x,\cP})+h(S^{x,\cP}_{0},0)\big] \qquad \mbox{for all $x\in \Lambda_n$}\,,$$ where the expectation is taken only over the random trajectory of $S^{x,\cP}$. 
In particular, the distribution of $h^\cP(\cdot,t)$ given $\cP$ is a multivariate Gaussian and hence the unconditional law of the Glauber dynamics is a mixture, over the Poisson clock ring times, of Gaussian processes. We refer to Theorem~\ref{thm:rw-representation-exact-equality} and Corollary~\ref{cor:random-walk-representation} for the formal statements. 

\begin{remark}\label{rem:tv-Gaussians}
At this point, one could hope to leverage exact formulae for the total-variation between high-dimensional Gaussian distributions. If for most realizations of $\cP$, $h^\cP(t)$ is mixed after time $t_\star$, then the desired mixing time upper bound would follow. Using the BTRW representation, it is possible to show that for typical $\cP$, for $t\ge t_\star$, the covariance matrix of $h^\cP(t)$ can be assumed to be the Green's function (up to negligible error terms). In that case, bounding the total-variation distance between $h^\cP(t)$ and stationarity can be found, by reduction to a 1D Gaussian computation, to be equivalent to bounding
\begin{align}\label{eq:m-Laplacian-m}
     (m^\cP_t)^{\mathsf T} \Delta m^\cP_t  \qquad \mbox{where} \quad (m^\cP_t)_x = \mathbb E[h^\cP(x,t)]\,.
\end{align}
However, the mean vector $m_t^\cP$---essentially the probability that the BTRW from $x$ did not hit the boundary $\partial \Lambda_n$ in time $[0,t]$---may be quite oscillatory for fixed (typical) $\cP$ (even if averaged over $\cP$ it is very smooth). This possibility is captured by the fact that the number of jumps taken by independent BTRWs from neighboring sites $x$ and $y$, may differ by $\sqrt{t}\approx n\sqrt{\log n}$. Ignoring the correlations between the number of jumps and the trajectory of the BTRW in $\cP$, this would indeed induce oscillations in $m_t^\cP$ that are too large to naively bound. Though unlikely, these oscillations could in principle align with high modes of the Laplacian, and contribute non-negligibly to the quantity in~\eqref{eq:m-Laplacian-m}. Decoupling the dependencies between $\cP$ and the survival probability of the BTRW appears to be a challenging task, though this could yield a path to obtaining a cutoff profile upper bound to match Theorem~\ref{thm:main-lower-bound}.
\end{remark}

\subsubsection{A two-stage coupling}
Given this, the upper bound of Theorem~\ref{thm:main-revised} takes up most of the work of the paper; our approach to this upper bound entails coupling the evolution started from the maximal configuration of all $n$ and any generic configuration $g$ with $\|g\|_{\infty}\le n$. 
Let us denote the corresponding Glauber dynamics by $h^{\vec{n}}(x,t)$ and $h^{\bg}(x,t)$ respectively.

The coupling will proceed by stitching together two stages of monotone couplings so that we have $h^{\vec{n}}(x,t) \ge h^{\bg}(x,t)$ for all $t\ge 0$. As a result of this, the coalescence time of the coupling time is upper bounded by the hitting time of zero by the volume process $V_t=\sum_x h^{\vec{n}}(x,t) - h^{\bg}(x,t)$.   
We now describe the different steps by which this hitting time is bounded by $t_\star + O(n^2 \log \log n)$. Such a two-stage coupling was already used to great effect in the 1D evolutions studied in~\cite{CLL-mixing-simplex,CLL-nabla-phi}; in the below we overview the approach for pedagogical reasons, then emphasize the places where significant innovations were required to push those arguments to higher dimension. 

\begin{itemize}
\item The first phase of the coupling of $h^\bn(x,t)$ and $h^\bg(x,t)$ uses the same underlying Poisson process determining the update sequence, and the same standard Gaussian random variables $Z(y,s)$ to carry out the updates at each site. This is called the identity coupling. (The specifics of the coupling beyond monotonicity are actually not important in the first stage, but the identity coupling is the simplest such choice.)

This stage is used to smoothly bring the mean of the process $V_t$ down to slightly below (e.g., by a factor of $(\log n)^{-5}$) its ``equilibrium scale" of $O(n^2)$. This occurs in time that is $t_\star + O(n^2 \log \log n)$ because the expectation of the volume process $V_t$ shrinks from its initial value of $n^{3}$ with exponential rate $\lambda_\one = \frac{\pi^2}{2n^2}+O(n^{-4})$, the smallest eigenvalue of the Laplacian associated to the standard random walk on $\Lambda_n$. 
This follows from the BTRW representation, by recognizing that the mean discrepancy at a site $x$ is connected to the  probability that the BTRW $S_{-t}^{x,\cP}$ has not been absorbed into $\partial \Lambda_n$---what we call the survival probability of the BTRW. By means of this correspondance, we get both quenched and annealed in $\cP$ estimates on the mean of the DGFF dynamics, and thus the mean discrepancy at each site. 

\item If we were to continue using the identity coupling, these discrepancies would continue decaying with rate $\lambda_\one$, and the time it would take to reach $V_t = o(1)$ so that a ``union bound" over all sites would suffice for coalescence, is a further $\frac{4}{\pi^2}n^2 \log n$---destroying any chance at establishing cutoff. We therefore switch at this point to a coupling called \emph{the sticky coupling}, which again uses the same Poisson clocks for the two chains, but upon an update at a site $x$, couples the resulting values of the update at $x$ optimally (maximizing the probability that the discrepancy at $x$ becomes zero).  

The argument then proceeds by establishing that under the sticky coupling, in a further $O(n^2)$ time, $V_t$ hits zero. For this, we leverage the fact that $V_t$ is a super martingale, then perform a multi-scale analysis, at each scale bounding the time it takes to drop by a multiplicative factor of $1/\log n$. The $i$'th such scale reduction will take time at most $2^{-i} n^2$ so that this can be summed over $i$ and cumulatively contributes a time of $O(n^2)$. 

Unsurprisingly, this analysis relies on lower bounding the angle bracket process corresponding to $V_t$ (for the reader unfamiliar with the notion, the precise definition appears in Definition~\ref{def:supermartingale-angle-bracket}), which in turn upper bounds the hitting times of lower scales. While this is a rather technical part of the proof and bears similarities to the arguments in \cite{CLL-mixing-simplex,CLL-nabla-phi}, adapting the bound in~\cite{CLL-nabla-phi} completely breaks when the dimension is increased to two; this is because of a use of Cauchy--Schwarz to lower bound the growth rate of the angle bracket---governed by the $\ell^2$-norm of the discrepancy process $h^\bn(x,t) - h^\bg(x,t)$---by the volume process $V_t$. That lower bound incurs a factor of $1/|\Lambda_n|$, which in dimension two (and higher) is too small to still bound the hitting times of smaller scales by $o(n^2\log n)$. We refer the reader to Remark~\ref{rem:quadratic-variation-bound-difficulty} for a precise description of this obstruction.  

Our workaround to this problem is to use a tight bound on the number of sites at which the discrepancy is non-zero. To do so, we rely on the observation that the sticky coupling, when it fails to couple two updates, tends to yield a discrepancy that is at least of order one. Leveraging such a stochastic domination, we establish a concentration inequality (Lemma~\ref{lem:two-norm-to-one-norm}) comparing the $\ell^2$ norm of the discrepancy process to its number of non-zero discrepancies, and then in turn comparing that to its $\ell^1$ norm, $V_t$, that holds throughout the second stage of the coupling process.  
This comparison also uses as an input an $\ell^\infty$ bound on the maximal discrepancy through the process, which is achieved by means of the BTRW representation and our knowledge of the (quenched) Gaussian law of the DGFF dynamics. Obtaining regularity estimates of this form that hold throughout the evolution, is often an esential source of difficulty in analysis of random surface dynamics.  
\end{itemize}

\begin{remark}\label{rem:intro-higher-d}
Let us remark briefly on the trouble this argument would face if considering the DGFF in dimensions $d\ge 3$. There, even under optimal assumptions on the quadratic variation accumulated by the $V_t$ super martingale under the sticky coupling, the coalescence time of the sticky coupling would be of order $\frac{2d-2}{\pi^2}n^2 \log n$, instead of the expected $\frac{d}{\pi^2} n^2 \log n$. For more details, see Remark~\ref{rem:higher-d-sticky-coupling}. Thus, any proof of cutoff for dimensions $d\ge 3$ must go either by a completely different dynamical coupling, or by more analytic means like those hinted at in Remark~\ref{rem:tv-Gaussians}.
\end{remark}

 \subsubsection{Sharp lower bound on the cutoff profile}
For the lower bounds of Theorem~\ref{thm:main-revised} and~\ref{thm:main-lower-bound}, we use as a test function the inner product of the first eigenfunction of the Laplacian with $h$, when initialized from a deterministic shift of a sample from stationarity. Conditioned on $\cP$, the law of this function is Gaussian, whose mean and covariance we can describe precisely, up to $o(1)$ error terms. This test function captures the right cutoff profile because the higher Fourier modes of $h$ should ``mix" in time that is $o(t_\star)$ when started out of equilibrium, and to first order, the mean of $h$ is simply a multiple of the first eigenfunction of the Laplacian. In~\cite{Lacoin-cutoff-circle}, the cutoff profile was obtained for the symmetric exclusion process on the circle ($d=1$) and similarly came from the inner product of the height with the first eigenfunction of the corresponding Laplacian. 

We note that the above intuition is difficult to quantify into an upper bound on the cutoff profile in our setting, however, because the $o(1)$ fluctuations in the mean of $h$ can in principle align with high modes of the Laplacian, in which case they would contribute significantly to the total-variation distance between two DGFF evolutions. 

\subsection{Related questions}\label{subsec:related-questions}
We conclude the introduction with a brief account of possible extensions and other related themes of interest. 
Given our exact usage of the inner product with the first eigenfunction of the Laplacian to get the cutoff profile lower bound, it may be of interest to obtain a limit theory for the evolution of the Fourier coefficients of the height function $\bh(t)$. This could offer a path for controlling high modes of the random surface, and obtaining an upper bound on the cutoff profile that matches Theorem~\ref{thm:main-lower-bound}. If we were studying a Langevin diffusion process emulating~\eqref{eq:she}, then the problem would diagonalize, and the Fourier coefficients would truly evolve according to \emph{independent} Ornstein--Uhlenbeck (OU) processes whose mean-reversion factors depend on the index of their Fourier mode. The tensorization would simplify the study of its mixing time and cutoff greatly (c.f.\ the recent paper~\cite{BCL22} where cutoff for \emph{interacting} OU processes was studied in detail). Since our motivation comes from the study of random surface dynamics, where the allowed heights are often discrete, our preference is for the Glauber dynamics, where this tensorization does not hold. But it would be interesting to understand to what extent correlations induced by $\cP$ in the Fourier coefficient evolutions disappear in the limit.

Beyond the Glauber dynamics of the DGFF, of course even obtaining $O(n^2 \log n)$ mixing time bounds, let alone cutoff, for lattice models of random surfaces belonging to the EW universality class (e.g., the height function of the dimer model, the solid-on-solid model, and the interface of the Ising model) would be of significant interest. 
A modified setting of the DGFF that one could explore is the mixing time of random surface models in the presence of a hard floor (conditioned on being non-negative), inducing an entropic repulsion effect. While in dimension one, sharp mixing time bounds, including cutoff at $\frac{1}{\pi^2} n^2 \log n$ have been extended to this setting in~\cite{CMT-repulsion,Yang-repulsion}, in dimension two, the situation is more complicated. In fact, for certain integer-valued random surface models (e.g., the solid-on-solid model) the mixing time can slow down exponentially due to such a floor effect~\cite{CLMST}.

Another natural next step is to move beyond the 2D setting to the DGFF on more general families of graphs, in particular subsets of $\mathbb Z^d$ for $d\ge 3$. There, we do not expect the sticky coupling to coalesce in $o(n^2 \log n)$ time, and one would likely need to rely on the quenched Gaussianity of the interface together with the Fourier techniques hinted at above to demonstrate cutoff. 

Lastly, we mention that there is an entirely different class of random surface growth where the growth has a a non-linear dependence on the local gradient. This universality class is dictated by the stochastic PDE known as the \emph{KPZ equation}~\cite{KPZ} which one obtains by adding the non-linear term $|\nabla h(x,t)|^2$ to the right-hand side of~\eqref{eq:she}. In contrast to the SHE, the non-linearity is expected to give rise to non-Gaussian fluctuations. In one dimension, the KPZ equation has seen enormous attention and integrable features of certain models have led to remarkable progress (see~\cite{corwin2} for a survey). In the mixing time direction, cutoff was recently shown for the asymmetric exclusion process (a natural Markov chain belonging to the KPZ class) on the line segment in~\cite{Labbe-Lacoin-ASEP,Bufetov-ASEP}.
In higher dimensions much less is known, and in fact some recent works show that under certain renormalizations, the KPZ equation may even admit EW limits and Gaussian fluctuations in sub-critical regimes~\cite{Chatterjee-Dunlap,Caravenna-Rongfeng-Zygouras,Magnen-Unterberger}. 
Showing that the KPZ equation yields a non-Gaussian fluctuation theory under some renormalization in dimensions $d\ge 2$ remains a major open problem.
In light of the above advances, 
studying mixing times and cutoff in random surface growth models belonging to the KPZ universality class is an exciting avenue for future research.

\subsection*{Acknowledgements} The authors thank the anonymous referees for their careful reading and suggestions. The authors thank H.\ Lacoin and P.\ Diaconis for useful suggestions. S.G. was partially supported by NSF grant DMS-1855688, NSF Career grant DMS-1945172, and a Sloan Fellowship. R.G.\ thanks the Miller Institute for Basic Research in Science for its support.

	\section{A random walk representation of the DGFF evolution}\label{sec:random-walk-representation}
	In this section, we formally construct the random walk representation of the Glauber dynamics for the DGFF; this serves as a key source of regularity estimates for the height process over time, as alluded to in  Section~\ref{subsec:proof-ideas}.

	\subsection{Formal definition of the Glauber dynamics}
	We begin by giving a formal definition of the continuous-time Glauber dynamics for the DGFF on $\Lambda_n$. 

	\begin{definition}\label{def:surface-RW-coupling}
		Fix a realization of Poisson noise $\cQ  = \cQ_n$ on $\Lambda_n \times [0,t]$, enumerate the set of all times of $\cQ$ in increasing order as 
		\begin{align*}
		0<\sigma_1 < \sigma_2 < \cdots < \sigma_{N(t)}<t\,.
		\end{align*}
		and let $\nu_1,...,\nu_{N(t)}$ be the vertices at which these clock rings occur. 
		For convenience, let $\sigma_0 = 0$. Let $(G_1,...,G_{{N(t)}})$ be a sequence of i.i.d.\ standard normal random variables. 
		Given initial data $\bh(0) = (h(x,0))_{x\in \Lambda_n}$, construct $\bh^{\cQ}(t) = (h^{\mathcal Q}(x,t))_{x\in \Lambda_n}$  as follows. 
		\begin{itemize}
			\item For all $s\in [0, \sigma_1)$, let $h^{\mathcal Q}(x,s) = h(x,0)$ for all $x\in \Lambda_n$.
			\item For every $j\ge 1$, for all $s\in [\sigma_j, \sigma_{j+1})$, let 
			\begin{align*}
			h^{\cQ}(x, s) = \begin{cases}
			h^{\cQ}(x,\sigma_{j-1}) & x\ne \nu_{j} \\ 
			G_{j} + \frac{1}{4} \sum_{w\sim \nu_{j}} h^{\cQ}(w, \sigma_{j-1}) & x = \nu_{j} 
			\end{cases}\,.
			\end{align*}
		\end{itemize}
	\end{definition}
	
	\begin{remark}
		Notice that the update rule is the same as resampling the value of $h^{\cQ}$ at the site $\nu_j$ conditionally on its values on the neighbors of $\nu_j$. This is because the distribution of the DGFF at a vertex $v$, conditionally on the values of its neighbors is exactly distributed as a unit variance Gaussian whose mean is the average of its neighbors' values.
	\end{remark}
	
	\begin{remark}\label{rem:identity-coupling}
	Definition~\ref{def:surface-RW-coupling} naturally induces a coupling of DGFF dynamics chains with different initializations by use of the same Poisson update sequence $\cQ$ and the same Gaussian random variables $G_j$. This coupling will be called the \emph{identity coupling}, and can easily be seen to be a monotone coupling. 
	\end{remark}

	Let us pause to use the formal representation of the above definition to reason that it suffices for us to consider the DGFF evolution with identically zero boundary conditions, as alluded to in Remark~\ref{rem:bc}. The following shows that the dynamics with any other boundary conditions can be coupled to simply be a vertical shift of the one with the zero boundary data. For a function $\eta$ on $\partial \Lambda_n$, let $\Harm_\eta$ be the discrete harmonic extension of $\eta$, i.e., the unique function on $\Lambda_n \cup \partial \Lambda_n$ that takes values $\eta$ on $\partial \Lambda_n$ and is discrete harmonic on $\Lambda_n$. 
	\begin{lemma}\label{lem:boundary-conditions}
		Let $\bh$ be the Glauber dynamics with zero boundary conditions, and let $\bh_1$ be the Glauber dynamics with boundary conditions $\eta_1 \in \mathbb R^{\partial \Lambda_n}$. Initialize $\bh_1(0) = \Harm_{\eta_1} + \bh(0)$. Then, 
		\begin{align*}
		\bh_1(t) - \Harm_{\eta_1}  \stackrel{d}= \bh(t)\,.
		\end{align*}
	\end{lemma}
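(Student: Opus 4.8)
The plan is to exhibit an explicit coupling of $\bh_1$ and $\bh$ that runs off the same Poisson update sequence $\cQ$ and the same i.i.d.\ Gaussians $(G_j)_{j\le N(t)}$, and then to check by induction on the update times that the identity $\bh_1(t) = \Harm_{\eta_1} + \bh(t)$ holds pathwise (in particular in distribution). The point is that the affine map $\bh \mapsto \Harm_{\eta_1} + \bh$ should intertwine the two dynamics exactly, because the Glauber update is itself affine in the current configuration, and $\Harm_{\eta_1}$ is the fixed point of the (noiseless) local averaging operator on $\Lambda_n$ with boundary data $\eta_1$.

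The key steps, in order: First, set up the coupling: use Definition~\ref{def:surface-RW-coupling} to build $\bh(t)$ from $\cQ$, $(G_j)$, and $\bh(0)$ with zero boundary data, and build $\bh_1(t)$ from the \emph{same} $\cQ$ and $(G_j)$, with boundary data $\eta_1$ and initialization $\bh_1(0) = \Harm_{\eta_1} + \bh(0)$; note that $\bh(0)$ is zero on $\partial\Lambda_n$ so $\bh_1(0)$ agrees with $\eta_1$ there, as required. Second, prove the pathwise claim $h_1^{\cQ}(x,s) = \Harm_{\eta_1}(x) + h^{\cQ}(x,s)$ for all $x\in\overline\Lambda_n$ and $s\in[0,t]$ by induction on $j$, the index of the update time $\sigma_j$. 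The base case $s\in[0,\sigma_1)$ is the initialization. For the inductive step, at time $\sigma_j$ only the site $\nu_j\in\Lambda_n$ changes; using the induction hypothesis at the neighbors $w\sim\nu_j$ (which lie in $\overline\Lambda_n$, so the hypothesis applies there too, including boundary sites where the hypothesis reads $\eta_1 = \Harm_{\eta_1}$), we get
\[
h_1^{\cQ}(\nu_j,\sigma_j) = G_j + \tfrac14\sum_{w\sim\nu_j} h_1^{\cQ}(w,\sigma_{j-1}) = G_j + \tfrac14\sum_{w\sim\nu_j}\big(\Harm_{\eta_1}(w) + h^{\cQ}(w,\sigma_{j-1})\big)\,.
\]
Since $\nu_j\in\Lambda_n$ and $\Harm_{\eta_1}$ is discrete harmonic on $\Lambda_n$, $\tfrac14\sum_{w\sim\nu_j}\Harm_{\eta_1}(w) = \Harm_{\eta_1}(\nu_j)$, so the right side equals $\Harm_{\eta_1}(\nu_j) + \big(G_j + \tfrac14\sum_{w\sim\nu_j} h^{\cQ}(w,\sigma_{j-1})\big) = \Harm_{\eta_1}(\nu_j) + h^{\cQ}(\nu_j,\sigma_j)$, which closes the induction. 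Third, conclude: under this coupling $\bh_1(t) - \Harm_{\eta_1} = \bh(t)$ almost surely for every $t$, hence the two sides have the same law, which is exactly the claim $\bh_1(t) - \Harm_{\eta_1} \stackrel d= \bh(t)$.

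There is no serious obstacle here; the only point requiring care is bookkeeping the boundary sites in the induction — one should phrase the inductive hypothesis for all $x\in\overline\Lambda_n$ (with the convention that $h$, $h_1$ are frozen at their boundary values) so that the averaging step over neighbors $w\sim\nu_j$, some of which may be in $\partial\Lambda_n$, is legitimate, and then invoke discrete harmonicity of $\Harm_{\eta_1}$ precisely at the interior site $\nu_j$ where the update occurs. One should also remark that the same argument shows the \emph{stationary} measures are related by the same shift, i.e.\ $\pi_n$ with boundary data $\eta_1$ is the pushforward of $\pi_n$ with zero boundary data under $\bh\mapsto\Harm_{\eta_1}+\bh$ — this follows by taking $t\to\infty$, or directly from the quadratic form defining the density together with the fact that $\Harm_{\eta_1}$ minimizes the Dirichlet energy. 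This justifies the claim in Remark~\ref{rem:bc} that all results transfer to general Dirichlet boundary conditions.
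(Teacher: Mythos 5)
Your argument is correct and is essentially identical to the paper's proof: both fix the Poisson update sequence and the i.i.d.\ Gaussians, and prove by induction over the clock-ring times that the two chains differ pathwise by $\Harm_{\eta_1}$, using discrete harmonicity at the updated site to close the induction. Your added care about phrasing the inductive hypothesis on $\overline\Lambda_n$ and the closing remark about stationary measures are sound but are refinements rather than a different route.
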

	
	\begin{proof}
		It suffices to show that the left and right-hand sides agree with probability one if they are constructed by Definition~\ref{def:surface-RW-coupling} using the same $\cQ$ and the same Gaussians $(G_j)_j$. Fix almost any $\cQ$, and enumerate the times of $\cQ$ in increasing order as 
		\begin{align*}
		0<\sigma_1<\sigma_2< \cdots <\sigma_{N(t)}<t\,.
		\end{align*}
		We will prove inductively over $j$ that 
		\begin{align*}
		\bh_1(\sigma_j) - \Harm_{\eta_1} = \bh(\sigma_j)\,.
		\end{align*}
		The base case of $\sigma_0:=0$ is evident by construction, as $\bh_1(0) - \Harm_{\eta_1} = \bh(0)$. Suppose now that for the first $j-1$ updates, the equality holds. Then, at time $\sigma_j$, suppose a vertex $v_j$ is updated. The equality on all vertices other than $v_j$ evidently carry over from $\sigma_{j-1}$. The relevant updates made at $v_j$ are as follows: 
		\begin{align*}
		\bh_1(v_j,\sigma_j) & = \frac{1}{4}\sum_{z\sim v_j} \bh_1(z,\sigma_{j-1}) + G_j\,, \qquad \mbox{and}\qquad \bh(v_j,\sigma_j)  = \frac{1}{4}\sum_{z\sim v_j} \bh(z,\sigma_{j-1}) + G_j\,.
		\end{align*}
		Then, by the inductive hypothesis,
		\begin{align*}
		\bh_1(v_j,\sigma_j) - \bh(v_j,\sigma_j) = \frac{1}{4}\sum_{z \sim v_j} \Harm_{\eta_1}(z)\,,
		\end{align*}
		which by harmonicity of $\Harm_{\eta_1}$, is exactly $\Harm_{\eta_1}(v_j)$. 
	\end{proof}
	
	\subsection{Random walk representation of the DGFF evolution}
	Given the construction of Definition~\ref{def:surface-RW-coupling}, we now wish to define a backwards-in-time random walk (BTRW) through a Poisson noise field, i.e., it jumps whenever it encounter a point in the latter, through which we will give an alternative representation of the surface evolution $\bh(t)$.

	Let $\cP_{\mathbb Z^2} = (p_{v,i})_{v\in \mathbb Z^2, i\ge 1}$ be a Poisson field in the space-time slab $\mathbb Z^2 \times (-\infty, 0]$, i.e., for each $v\in \mathbb Z^2$, assign an independent Poisson clock, with clock rings at $(t_{v,i})_{i\ge 1}$; then let $p_{v,i} = (v,-t_{v,i})\in \mathbb Z^2 \times (-\infty,0]$. For ease of notation, let $\cP_{n} = \cP_{\Lambda_n}$ be the restriction of $\cP_{\mathbb Z^2}$ to $\Lambda_n \times (-\infty,0]$, and thus let $\cP_\infty = \cP_{\mathbb Z^2}$. When understood from context, we will drop the $n$ notation so that $\cP = \cP_n$.

	\begin{definition}\label{def:backwards-rw}
		Fix a realization of the Poisson field $\cP_\infty$ such that {$(t_{v,i})_{v,i}$} are all distinct (this event has probability one). Then, we can easily identify the times $t\in \{t_{v,i}: v\in \mathbb Z^2, i\ge 1\}$ with the unique space-time point $p\in \cP_\infty$ such that $p$ is at time $-t$, and vice versa. For every $x\in \Lambda_n$, define the BTRW in $\cP= \cP_n$ started from $x$, denoted $(S_{-s}^{x,\cP})_{s \ge 0}$, as follows: 
		\begin{enumerate}
			\item Let $(\mathcal E^x_{j})_{j\ge 1}$ be a sequence of i.i.d.\ jumps uniform at random among $\{\pm e_1,...,\pm e_d\}$.   
			\item Construct the discrete-time random walk $(Y_k)_{k\ge 0}$ by initializing $Y_0 = x$ and for each $i\ge 1$, letting $Y_i = Y_{i-1} + \cE_i^x$. 
			\item Initialize $S_{-s}^{x,\cP} = Y_0$ for all $-s\in [-T^x_1,-T_0^x]$ where $-T^x_1 = -t_{x,1}$ and $T^x_0 = 0$.
			\item For each $i \ge 1$, let 
			$$S_{-s}^{x,\cP} = Y_i \qquad \mbox{for all $-s\in [-T_{i+1}^x, -T_{i}^x)$}\,,$$
			where 
			$$-T_{i+1}^x = \max \{-t< -T_{i}^x: -t\in \{-t_{Y_i,j}\}_{j\ge 1}\}\,.$$
			
		\end{enumerate}
	\end{definition}
	
	\begin{definition}
		We denote the random walk \emph{trajectory}  $(S_{-s}^{x,\cP})_{-s\in [-t,0]}$ by $S_{[-t,0]}^{x,\cP}$. We view this as a right-continuous function from $[-t,0] \to \mathbb Z^2$. (While right-continuity in time is more typical of forward-in-time random walks, we constructed the BTRW as right-continuous to match the right-continuity of the Glauber dynamics.) The BTRW trajectory is in direct correspondence with the sequence of Poisson points it collects $\{p\in \cP: p\in S_{[-t,0]}^{x,\cP}\}$ (and almost surely in direct correspondence with the sequence of times $\{-T_{i}^{x}\ge -t\}$). We therefore use $S_{[-t,0]}^{x,\cP}$ both to denote the function from $[-t,0] \to \mathbb Z^2$ and the set of Poisson points or times it passes through.  
	\end{definition}
	
	\begin{remark}
		Observe that using $\cP_n$ to generate the random walk is the same as absorbing the walk at $\partial \Lambda_n$, since as soon as it jumps onto a vertex of $\partial \Lambda_n$ it will never see another clock ring, and therefore never move again. 
	\end{remark}
	
	We refer the reader to Figure~\ref{fig:rw-rep} for a depiction of BTRW trajectories in~$\cP$. 
	
	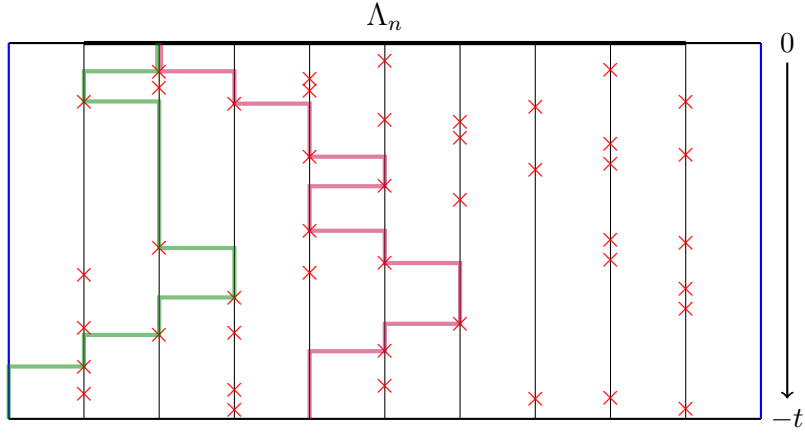
\begin{figure}
		\centering
		\begin{tikzpicture}
		\draw[ultra thick] (1,0)--(9,0);
		\draw[thick] (0,0)--(10,0);
		\foreach \i in {0,...,10}{\draw (\i,0)--(\i,-5);};
		\draw[thick] (0,-5)--(10,-5);
		\draw[ thick, color = blue] (0,0)--(0,-5);
		\draw[ thick, color = blue] (10,0)--(10,-5);
		
		\node[font = \large] at (5,.35) {$\Lambda_n$};
		\node (A0) at (10.35,0) {$0$};
		\node (At) at (10.35,-5) {$-t$};
		\draw[thick, ->] (A0)--(At);

		\tikzstyle{time-mark}=[color = red, opacity = 1]
		
		\foreach \i in {1,...,5}{\coordinate (A1\i) at (1,-{Mod(1.311^(10+\i),5)}) {}; \node[style = time-mark] at (A1\i) {$\times$};};
		\foreach \i in {1,...,4}{\coordinate (A2\i) at (2,-{Mod(1.414^(10+2*\i),5)}) {}; \node[style = time-mark] at (A2\i) {$\times$};};
		\foreach \i in {1,...,5}{\coordinate (A3\i) at (3,-{Mod(1.514^(9+2*\i),5)}) {}; \node[style = time-mark] at (A3\i) {$\times$};};
		\foreach \i in {1,...,5}{\coordinate (A4\i) at (4,-{Mod(1.624^(10+2*\i),5)}) {}; \node[style = time-mark] at (A4\i) {$\times$};};
		\foreach \i in {1,...,6}{\coordinate (A5\i) at (5,-{Mod(1.148^(10+\i),5)}) {}; \node[style = time-mark] at (A5\i) {$\times$};};
		\foreach \i in {1,...,4}{\coordinate (A6\i) at (6,-{Mod(1.244^(10+\i),5)}) {}; \node[style = time-mark] at (A6\i) {$\times$};};
		\foreach \i in {1,...,3}{\coordinate (A7\i) at (7,-{Mod(1.344^(10+\i),5)}) {}; \node[style = time-mark] at (A7\i) {$\times$};};
		\foreach \i in {1,...,6}{\coordinate (A8\i) at (8,-{Mod(1.777^(10+\i),5)}) {}; \node[style = time-mark] at (A8\i) {$\times$};};
		\foreach \i in {1,...,6}{\coordinate (A9\i) at (9,-{Mod(1.555^(9+\i),5)}) {}; \node[style = time-mark] at (A9\i) {$\times$};};
		
		\draw[color = green!50!black, opacity = .5, ultra thick] (1.975,0)-- (1.975,-{Mod(1.414^(10+2*3),5)}) -- ++(-.975,0) -- (A12) -- ++(1,0)--(A22) -- ++(1,0) --(A33)--++(-1,0)--(A21)--++(-1,0)--(A14)--++(-1,0)--(0,-5);
		
		\draw[color = purple, opacity = .5, ultra thick] (2.025,0)-- (2.025,-{Mod(1.414^(10+2*3),5)}) -- ++(.975,0) -- (A31)--++(1,0)--(A41)--++(1,0)--(A54)--++(-1,0)--(A42)--++(1,0)--(A55)--++(1,0)--(A62)--++(-1,0)--(A56)--++(-1,0)--(4,-5);
		\end{tikzpicture}
		\caption{The Poisson field $\cP_n$ on $\Lambda_n \times [-t,0]$ is depicted by the red $\times$-symbols. Two BTRW trajectories  through $\cP_n$ initialized from $x=2$ are depicted in red and green. Each point in $\cP_n$ is assigned an i.i.d.\ standard Gaussian $(Z_p)_{p\in \cP_n}$. The green trajectory hits $\partial \Lambda_n$ (blue) and gets absorbed there, so its contribution to $\bh^{\cQ_t}$ in Theorem~\ref{thm:rw-representation-exact-equality} is only in the Gaussians it collects. The red trajectory is in $\Lambda_n$ at time $-t$ and therefore additionally picks up the value of the initialization at the point $S_{-t}^{x,\cP}=4$ it ends up at.}
		\label{fig:rw-rep}
	\end{figure}

	The main result of this section is the following representation of the Glauber dynamics for the DGFF surface on $\Lambda_n$ in term of expectations of the above random walk trajectories through the Poisson noise field $\cP_n$. Assign every $p\in \cP_\infty$ an independent $\mathcal N(0,1)$ random-variable $Z_p$ alternatively denoted $Z_{-t}$ if $p$ is at time $-t$ (almost surely all points of $\cP_\infty$ are at distinct times). We will show that conditionally on the clock rings of the Glauber dynamics between times $[0,t]$, there is a corresponding Poisson noise field on $\Lambda_n \times [-t,0]$, in which the Glauber dynamics is expressed as averages of the Gaussians $(Z_p)$ collected by the BTRW trajectory at times $[-t,0]$.

	Let $\cQ$ be a Poisson noise field on $\mathbb Z^2 \times [0,t]$, and let $h^\cQ(t)$ be the Glauber dynamics whose sequence of clock rings is exactly $\cQ$. Let $\vartheta_r \cP$ be the unit intensity Poisson process on $\Lambda_n \times [0,r]$ obtained by time-shifting $\cP$ by $r$ and then restricting to non-negative times, i.e., it simply considers $\cP$ restricted to $\Lambda_n\times [-r,0]$ shifted in the time coordinate by $r$ to fit $\Lambda_n\times [0,r].$
	
	\begin{theorem}\label{thm:rw-representation-exact-equality}
		Fix almost any Poisson noise process $\cP = \cP_n$, and let $\cQ_t =  \vartheta_t\cP$. Let $(Z_p)_{p\in \cP}$ be i.i.d.\ standard Gaussian random variables. For any initialization $\bh(0)$, for all $t>0$, 
		\begin{align*}
		\bh^{\cQ_t}(t) \stackrel{d}= \bigg(\mathbb E_{\mathcal E^x} \Big[h\big(S_{-t}^{x,\cP},0\big) + \sum_{p \in \bS_{[-t,0]}^{x,\cP}} Z_p \Big]\bigg)_{x\in \Lambda_n}\,.
		\end{align*}
	\end{theorem}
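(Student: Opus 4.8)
\medskip
\noindent\emph{Proof strategy.} The plan is to prove the stronger \emph{pathwise} statement and deduce the distributional one from it. Fix a realization of $\cP=\cP_n$ in the full-measure set on which all ring times are distinct and locally finite. The time-reversal $\vartheta_t$ puts the points of $\cP$ in $\Lambda_n\times[-t,0]$ in bijection with the clock rings $(\nu_j,\sigma_j)_{j=1}^{N(t)}$ of $\cQ_t$; write $p_j\in\cP$ for the point corresponding to $(\nu_j,\sigma_j)$. I would couple the Glauber dynamics of Definition~\ref{def:surface-RW-coupling} to the field $(Z_p)_{p\in\cP}$ by taking its $j$-th update Gaussian to be $G_j:=Z_{p_j}$; since $(Z_{p_j})_j$ is still an i.i.d.\ standard Gaussian sequence, the coupled chain has the same law as $\bh^{\cQ_t}(t)$. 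It then suffices to prove that, for this coupling and for every $x\in\Lambda_n$,
\begin{equation}\label{eq:pathwise-rw-rep}
h^{\cQ_t}(x,t)=\mathbb E_{\cE^x}\Big[h\big(S^{x,\cP}_{-t},0\big)+\sum_{p\in S^{x,\cP}_{[-t,0]}}Z_p\Big]\,.
\end{equation}

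I would prove \eqref{eq:pathwise-rw-rep} by induction on $N=N(t)$, the number of points of $\cP$ in $\Lambda_n\times[-t,0]$. The base case $N=0$ is immediate: no updates occur, so $h^{\cQ_t}(x,t)=h(x,0)$, while the BTRW from $x$ never jumps on $[-t,0]$, ends at $S^{x,\cP}_{-t}=x$, and collects no Gaussians. For the inductive step I would peel off the \emph{last} forward update, which under $\vartheta_t$ is the point $p^\star=(v^\star,-s^\star)\in\cP$ closest to time $0$ (i.e.\ with $s^\star$ minimal), so that $(\nu_N,\sigma_N)=(v^\star,t-s^\star)$ and $G_N=Z_{p^\star}$. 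Writing $\cP^\star=\cP\setminus\{p^\star\}$ (still in the good set, with $N-1$ points in the slab) and $\cQ'=\cQ_t\setminus\{(\nu_N,\sigma_N)\}$, the final update affects only $v^\star$, so
\[
h^{\cQ_t}(x,t)=h^{\cQ'}(x,t)\quad(x\ne v^\star),\qquad h^{\cQ_t}(v^\star,t)=Z_{p^\star}+\tfrac14\sum_{w\sim v^\star}h^{\cQ'}(w,t)\,,
\]
with the convention $h^{\cQ'}(w,t)=0$ for $w\in\partial\Lambda_n$ (the boundary condition). Applying the inductive hypothesis to $\cP^\star$ (same $t$, same initialization) rewrites each $h^{\cQ'}(w,t)$ as an expectation over the BTRW in $\cP^\star$.

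The remaining task is to transfer these identities from $\cP^\star$ to $\cP$. Minimality of $s^\star$ means $\cP$ has no points in $\Lambda_n\times(-s^\star,0]$, so every BTRW $S^{x,\cP}$ holds at $x$ throughout $[-s^\star,0]$. For $x\ne v^\star$ this renders the extra point $p^\star$ (at site $v^\star$, time $-s^\star$) invisible to $S^{x,\cP}$, hence $S^{x,\cP}_{[-t,0]}=S^{x,\cP^\star}_{[-t,0]}$ with the same collected Gaussians, which gives \eqref{eq:pathwise-rw-rep} for $x\ne v^\star$ and — together with the boundary convention — for every neighbour $w\sim v^\star$. For $x=v^\star$, the walk $S^{v^\star,\cP}$ makes its first jump exactly at time $-s^\star$, collecting $Z_{p^\star}$ and moving to a uniform neighbour $w=v^\star+\cE^{v^\star}_1$; and because $\cP$ has no points at site $w$ in $(-s^\star,0]$ either, the remainder of $S^{v^\star,\cP}$ and the points it collects are precisely those of a fresh BTRW $S^{w,\cP}$ run with the shifted jumps $(\cE^{v^\star}_k)_{k\ge2}$. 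Averaging over $\cE^{v^\star}$ (first over the tail, then over the uniform first step) turns the right-hand side of \eqref{eq:pathwise-rw-rep} at $x=v^\star$ into $Z_{p^\star}+\tfrac14\sum_{w\sim v^\star}\mathbb E_{\cE^w}\big[\,h(S^{w,\cP}_{-t},0)+\sum_{p\in S^{w,\cP}_{[-t,0]}}Z_p\,\big]$, which matches the displayed expression for $h^{\cQ_t}(v^\star,t)$ once the inductive hypothesis is substituted. This closes the induction, and \eqref{eq:pathwise-rw-rep} yields the theorem.

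I expect the one delicate point — the step deserving a careful write-up — to be the ``restart'' identity: that, conditioned on its first jump, $S^{v^\star,\cP}$ continues as a genuinely fresh BTRW from the landing site with no residual holding time. This is exactly why one peels the update \emph{closest to time $0$} rather than an arbitrary one: the minimality of $s^\star$ guarantees that both $v^\star$ and the landing site $w$ are free of $\cP$-points on $(-s^\star,0]$, so the holding period that a fresh $S^{w,\cP}$ would spend at $w$ contains no clock rings and can be excised without changing the collected Gaussians, the absorption behaviour at $\partial\Lambda_n$, or the terminal position $S^{w,\cP}_{-t}$. Everything else is bookkeeping: the bijection $p_j\leftrightarrow(\nu_j,\sigma_j)$ induced by $\vartheta_t$, almost-sure distinctness of ring times, and treating walks absorbed on $\partial\Lambda_n$ as contributing the (zero) boundary value and no Gaussians.
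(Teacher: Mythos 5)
Your proof is correct, and it takes a genuinely different route from the paper's. The paper inducts on the length of the backward time window: it enumerates the $\cP$-points as $0>-\tau_1>-\tau_2>\dots$ and shows the identity inductively for $s$ crossing each $-\tau_k$, peeling off the \emph{earliest} forward update (i.e.\ the $\cP$-point $(v_k,-\tau_k)$ deepest in the past), applying the inductive hypothesis with the one-update-modified initialization $\bg_r=\bh^{\vartheta_r\cP}(r-s)$ as fresh initial data over the shorter window $[-s,0]$, and then computing the resulting change in the BTRW transition kernel $\P(S^{x,\cP}_{-r}=y)$ directly. You instead fix $t$ and induct on the number $N$ of $\cP$-points in $\Lambda_n\times[-t,0]$, peeling off the \emph{latest} forward update (i.e.\ the $\cP$-point $p^\star$ closest to time $0$) and applying the inductive hypothesis to $\cP^\star=\cP\setminus\{p^\star\}$ with the same $t$ and the same initialization. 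These are dual decompositions. What each buys: the paper's version never alters $\cP$, so it only needs the elementary Markov property of the Glauber chain together with the fact that the inductive hypothesis is quantified over all initializations — there is no lemma to prove about the BTRW. Your version keeps the initialization and time window fixed, which is conceptually tidy, but the price is the ``restart'' lemma — that excising the $\cP$-free holding interval $(-s^\star,0]$ at the landing site $w$ leaves the remainder of $S^{v^\star,\cP}$ distributionally identical to a fresh $S^{w,\cP}$, and that $S^{x,\cP}=S^{x,\cP^\star}$ pathwise for $x\neq v^\star$. You correctly identified this as the delicate point and correctly explained why peeling the update closest to time $0$ (rather than an arbitrary one) is exactly what makes it go through: minimality of $s^\star$ pins both $v^\star$ and $w$ over $(-s^\star,0]$. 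Two minor things to make explicit in a full write-up: (i) when substituting the inductive hypothesis at the neighbours $w\sim v^\star$, you pass from $S^{w,\cP^\star}$ (what the IH gives) to $S^{w,\cP}$ (what you need), which is licensed by your $x\neq v^\star$ case since $w\neq v^\star$; and (ii) the induction must be phrased so that $\cP^\star$ is again an admissible locally finite configuration with distinct ring times, which is automatic since the identity is deterministic given the configuration rather than tied to a particular Poisson realization.
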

	
	By averaging over $\cQ$ and $\cP$ and noting that the shift $\vartheta_{t}$ is a measure preserving transformation for the Poisson process, we obtain the following corollary. 
	
	\begin{corollary}\label{cor:random-walk-representation}
		For every initialization $\bh(0)$, for all $t>0$, we have 
		\begin{align*}
		\bh(t) \stackrel{d}= \Bigg(\mathbb E_{\mathcal E^x} \Big[h\big(S_{-t}^{x,\cP},0\big) + \sum_{p \in \bS_{[-t,0]}^{x,\cP}} Z_p \Big]\Bigg)_{x\in \Lambda_n}\,,
		\end{align*}
		where the randomness on the right-hand side is over both $\cP = \cP_n$ and $(Z_p)_{p\in \cP}$. 
	\end{corollary}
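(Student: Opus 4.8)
The plan is to obtain Corollary~\ref{cor:random-walk-representation} from Theorem~\ref{thm:rw-representation-exact-equality} simply by integrating out the Poisson field $\cP$, using the measure-preserving property of the time-shift $\vartheta_t$ recorded just above that theorem.

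First I would record the elementary fact that the unconditional Glauber dynamics $\bh(t)$ has the law of $\bh^\cQ(t)$ from Definition~\ref{def:surface-RW-coupling}, where now $\cQ$ is itself a unit-intensity space-time Poisson process on $\Lambda_n \times [0,t]$, drawn independently of the update Gaussians used in that construction. Consequently, for every bounded continuous test function $f$ on $\mathbb R^{\Lambda_n}$,
\[
\mathbb E\big[f(\bh(t))\big] \;=\; \mathbb E_{\cQ}\big[\Phi(\cQ)\big]\,, \qquad \Phi(\cQ) := \mathbb E\big[f(\bh^{\cQ}(t)) \,\big|\, \cQ\big]\,,
\]
noting that $\Phi$ is a deterministic functional of the clock-ring configuration $\cQ$ alone, the remaining randomness in $\bh^\cQ(t)$ being the i.i.d.\ update Gaussians.

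Next I would invoke Theorem~\ref{thm:rw-representation-exact-equality}: for almost every realization of $\cP = \cP_n$, with $\cQ_t = \vartheta_t \cP$, the vectors $\bh^{\cQ_t}(t)$ and $\big(\mathbb E_{\mathcal E^x}[\,h(S^{x,\cP}_{-t},0) + \sum_{p\in \bS^{x,\cP}_{[-t,0]}} Z_p\,]\big)_{x\in\Lambda_n}$ are equal in law, the former over the update Gaussians and the latter over $(Z_p)_{p\in\cP}$ and the walk increments $(\mathcal E^x)_x$. In terms of $\Phi$, this reads, for a.e.\ $\cP$,
\[
\Phi(\vartheta_t\cP) \;=\; \mathbb E_{(Z_p)_p,\,(\mathcal E^x)_x}\!\left[ f\Big(\big(\mathbb E_{\mathcal E^x}[\,h(S^{x,\cP}_{-t},0) + \textstyle\sum_{p\in\bS^{x,\cP}_{[-t,0]}}Z_p\,]\big)_{x\in\Lambda_n}\Big)\right]\,.
\]
Integrating this identity over $\cP$ and using that $\vartheta_t$ pushes the law of $\cP$ forward onto the law of a unit-intensity Poisson process $\cQ$ on $\Lambda_n\times[0,t]$ (exactly the measure-preservation statement cited before Theorem~\ref{thm:rw-representation-exact-equality}), the left side becomes $\mathbb E_\cQ[\Phi(\cQ)] = \mathbb E[f(\bh(t))]$, while the right side becomes the expectation of $f$ applied to the claimed random vector, with randomness over $\cP$ and $(Z_p)_{p\in\cP}$. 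Since $f$ was arbitrary, the two laws coincide, which is the corollary.

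There is essentially no substantive obstacle beyond two routine bookkeeping checks: that $\Phi$ genuinely depends only on $\cQ$ (so that the quenched identity can legitimately be integrated against the law of $\cP$ after the substitution $\cQ = \vartheta_t\cP$), and that $\vartheta_t$ — which restricts $\cP$ to $\Lambda_n\times[-t,0]$ and translates it onto $\Lambda_n\times[0,t]$ — is indeed measure preserving for the Poisson process. Both are immediate from the construction and were flagged in the paragraph preceding Theorem~\ref{thm:rw-representation-exact-equality}; all the real content is in Theorem~\ref{thm:rw-representation-exact-equality} itself.
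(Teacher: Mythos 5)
Your proposal is correct and follows exactly the paper's (one-line) argument: the paper derives the corollary from Theorem~\ref{thm:rw-representation-exact-equality} ``by averaging over $\cQ$ and $\cP$ and noting that the shift $\vartheta_t$ is a measure preserving transformation for the Poisson process.'' You have simply unpacked this via test functions, which is a routine elaboration of the same step.
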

	
	\begin{proof}[\textbf{\emph{Proof of Theorem~\ref{thm:rw-representation-exact-equality}}}]
		The proof will follow from an explicit coupling between the process on the left-hand side as defined in Definition~\ref{def:surface-RW-coupling} and the process on the right-hand side, defined in Definition~\ref{def:backwards-rw}. Fix almost any $\cP$. Enumerate the set of all times in $\cP = \cP_n$ as 
		\begin{align*}
		0> -\tau_1 > -\tau_2> \cdots\,.
		\end{align*}
		For any $s$, let $N(s) = \max\{ j: -\tau_j >-s\}$. 
		We claim that for every $s$, for any initial data $\bh(0)$,
		\begin{align}\label{eq:rw-representation-exact-equality}
		h^{\vartheta_s \cP}(x,s) = \mathbb E_{\mathcal E^x} \Big[h\big(S_{-s}^{x,\cP},0\big) + \sum_{p \in \bS_{[-s,0]}^{x,\cP}} Z_p \Big] \qquad \mbox{for all $x\in \Lambda_n$}\,,
		\end{align}
		holds if $h^{\vartheta_{s}\cP}(x,s)$ is generated using Definition~\ref{def:surface-RW-coupling} with
		\begin{align}\label{eq:Gaussian-G-Z-coupling}
		G_j = Z_{(v_{N(s) + 1-j}, -\tau_{N(s)+ 1 -j})}\qquad \mbox{for all $j\le N(s)$}\,,
		\end{align}
		where $v_l$ is the location of the clock ring at time $-\tau_{l}$. This is of course a valid coupling since both the $(G_j)_j$ and $(Z_p)_{p}$ are sequences of i.i.d.\ standard Gaussians. The desired result of the theorem would follow from establishing~\eqref{eq:rw-representation-exact-equality} with the choice $s=t$ for any realization of $(Z_p)_{p\in \cP}$. In the rest of this proof, both $\cP$ and $(Z_p)_{p\in \cP}$ will be fixed, so all expectations should be understood to be only over the jump sequence $\cE^x$, and we drop that subscript from the notation.  
		
		By right-continuity of the quantities on either side of~\eqref{eq:rw-representation-exact-equality}, it suffices to prove that equality for all $s\notin \{\tau_j\}_j$. Towards that, divide $(-\infty,0]\setminus \{\tau_j\}_j$ into intervals $I_0 = (-\tau_1,0]$ and $I_k = (-\tau_{k+1},-\tau_k)$. We will prove inductively over $k$ that~\eqref{eq:rw-representation-exact-equality} holds for all $s\in I_k$ and all initial data $\bh(0)$.

		If $k=0$, then evidently for every $s\in I_0$, $\vartheta_s \cP$ will not contain any clock rings, and $h^{\vartheta_s \cP}(x,s) = h(x,0)$ for all $x$. On the right-hand side the random walk $\bS_{[-s,0]}^{x,\cP}$ will not contain any jumps from $x$, and the right-hand also yields $h(x,0)$ verifying the base case. 

		Now suppose that the equality of~\eqref{eq:rw-representation-exact-equality} holds for all $s\in \bigcup_{l <k} I_l$; we will show it holds for all $s\in I_k$. By the inductive hypothesis, we have for all $s\in I_{k-1}$  
		\begin{align*}
		h^{\vartheta_s \cP}(x,s) = \mathbb E \Big[h(S_{-s}^{x,\cP},0) + \sum_{p\in \bS_{[-s,0]}^{x,\cP}} Z_p\Big]\qquad \mbox{for all $x\in \Lambda_n$}\,.
		\end{align*}
		Now notice that for $r>s$, $\bh^{\vartheta_r\cP}(r)$ initialized from $\bh(0)$ can be generated by letting $$\bg_r = \bh^{\vartheta_r \cP}(r-s)\,,$$
		and then using $\bg_r$ as the initial data for a new Glauber dynamics chain $\bh^{\vartheta_s \cP}(s)$. By this reasoning, for all $r>s$, we have
		\begin{align*}
		h^{\vartheta_{r}\cP}(x,r) = \mathbb E\Big[g_r(S_{-s}^{x,\cP}) + \sum_{p\in \bS_{[-s,0]}^{x,\cP}} Z_p\Big]\qquad \mbox{for all $x\in \Lambda_n$}.
		\end{align*}
		Let $r\in I_k$, and note that $N(s)+1 = N(r) = k$. Since under the coupling for $h^{\vartheta_r \cP}$, we have  $G_{1} = Z_{(v_{k},-\tau_{k})}$, we get    
		\begin{align*}
		g_r(y) = \begin{cases}
		h(y,0) & y\ne v_k \\
		Z_{(v_k,-\tau_{k})} + \frac{1}{4} \sum_{w\sim v_{k}} h(w,0) & y = v_k
		\end{cases}\,.
		\end{align*}
		Plugging in the expression for $g_r(y)$ in the display above it, we see that 
		\begin{align}\label{eq:E-tilde-h-expansion}
		\mathbb E [ g_r(S_{-s}^{x,\cP})] &  = \sum_{y\in \Lambda_n} g_r(y) \mathbb P(S_{-s}^{x,\cP} = y) \nonumber \\ 
		& = \sum_{y\ne v_{k}} h (y,0) \mathbb P(S_{-s}^{x,\cP} = y) + \mathbb P(S_{-s}^{x,\cP} = v_{k}) Z_{(v_k,-\tau_{k})}   + \frac{1}{4} \mathbb P(S_{-s}^{x,\cP} = v_{k}) \sum_{w\sim v_k} h(w,0)\,. 
		\end{align}
		Now notice that if $S_{-s}^{x,\cP} =y$ for a $y\ne v_k$, then since the only clock ring in $\Lambda_n$ in the time interval $(-r,-s)$ occurs at the site $v_{k}$, we have that $S_{-r}^{x,\cP} = S_{-s}^{x,\cP}$. On the other hand, if $S_{-s}^{x,\cP} = v_k$, then the probability that $S_{-r}^{x,\cP} = y$ is $\frac{1}{4}$ if $y\sim v_k$, and zero otherwise. Therefore,  
		\begin{align*}
		\mathbb P(S_{-r}^{x,\cP} = y) = 
		\begin{cases}
		0 & y = v_{k} \\
		\mathbb P(S_{-s}^{x,\cP} = y) + \frac{1}{4}\mathbb  P(S_{-s}^{x,\cP} = v_k) & y\sim v_k \\ 
		\mathbb P(S_{-s}^{x,\cP} = y) & \mbox{else}
		\end{cases}\,.
		\end{align*}
		It is then evident that the first and third terms together in the expansion of~\eqref{eq:E-tilde-h-expansion} are exactly the quantity 
		$\mathbb E[h(S_{-r}^{x,\cP},0)]$. 
		It remains to show that  
		\begin{align*}
		\mathbb P(S_{-s}^{x,\cP} = v_k) Z_{(v_k,-\tau_k)} = \mathbb E\Big[ \sum_{p\in \bS_{[-r,0]}^{x,\cP}} Z_p - \sum_{p\in \bS_{[-s,0]}^{x,\cP}} Z_p\Big]\,.
		\end{align*}
		To see this, notice that the right-hand side is exactly 
		\begin{align*}
		\mathbb E\Big[ \sum_{p\in \bS_{[-r,-s]}^{x,\cP}} Z_p\Big] = \mathbb E\Big[ Z_{(v_k,-\tau_{k})} \mathbf 1\{S_{-s}^{x,\cP} =v_k\}\Big]\,,
		\end{align*}
		as the only clock ring in the interval $[-r,-s]$ is the one at $(v_k, -\tau_k)$. 
		Altogether, we have obtained the desired equality of~\eqref{eq:rw-representation-exact-equality}.   
	\end{proof}

	\section{Consequences of the random walk representation}
	Having established the BTRW representation of Theorem~\ref{thm:rw-representation-exact-equality}, we now derive some fundamental consequences of this representation. 
	
	Before we proceed, let us introduce a change of notation that will prove to be convenient. Because the process on the left-hand side of Theorem~\ref{thm:rw-representation-exact-equality} requires one to keep shifting $\cP$ by time $t$ to obtain $\cQ_t$, and this is inconvenient to think about, let us solely work with the process on the right-hand side. Abusing notation, for each $\cP = \cP_n$, define the random process
	\begin{align}\label{eq:h-P}
	\bh^\cP(t) = \big(h^\cP(x,t)\big)_{x\in \Lambda_n} = \bigg(\mathbb E_{\cE^x}\Big[h(S_{-t}^{x,\cP},0) + \sum_{p\in S_{[-t,0]}^{x,\cP}} Z_p\Big]\bigg)_{x\in \Lambda_n}\,.
	\end{align}
	Recall from Theorem~\ref{thm:rw-representation-exact-equality} that for any fixed $t$, there is a direct coupling of this process to $\bh^{\cQ_t}(t)$, where $\cQ_t = \vartheta_t \cP$, but this coupling depends on $t$. Thus, we prefer to use the distributional equality of Corollary~\ref{cor:random-walk-representation} to view the Glauber dynamics $\bh(t)$ at time $t$ as the average over $\cP$ of $\bh^{\cP}(t)$. 
	
	\subsection{Gaussianity, and mean and covariance expressions}
	We begin by establishing the Gaussianity of the process $\bh^{\cP}(t)$ and giving random-walk characterizations of its mean and covariance. 
	
	\begin{proposition}\label{prop:mean-and-variance-of-h}
		Fix almost any $\cP$. For every $\bh(0)$, and every $t>0$, the law of $\bh^{\cP}(t)$ is a multivariate Gaussian. Its mean and covariance can be expressed as follows: 
		\begin{align*}
		\mathbb E[h^{\cP}(x,t)] & = \mathbb E_{\mathcal E^x} \Big[h(S_{-t}^{x,\cP},0)\Big]\quad \mbox{for all $x\in \Lambda_n$}\,, \\ 
		\cov[h^{\cP}(x,t),  h^{\cP}(y,t)] & =  \mathbb E_{\mathcal E^x,\mathcal E^y}\Big[\big|\big\{p\in \bS_{[-t,0]}^{x,\cP} \cap \bS_{[-t,0]}^{y,\cP}\big\}\big|\Big]\quad \mbox{for all $x,y\in \Lambda_n$}\,.
		\end{align*}
		Here $\mathbb E_{\cE^x,\cE^y}$ is the expectation with respect to the product measure $\mathbb P_{\cE^x}\otimes \mathbb P_{\cE^y}$. 
	\end{proposition}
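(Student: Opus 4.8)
The plan is to observe that, for a fixed admissible realization of $\cP$, the random variable $h^\cP(x,t)$ is simply an affine function of the i.i.d.\ Gaussians $(Z_p)_p$, with (non-random in $(Z_p)$) coefficients that carry a transparent random-walk meaning. Everything then follows from elementary properties of Gaussian vectors, once the relevant sums are recognized to be finite.

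First I would pin down the finiteness that legitimizes the computation. Fix $\cP = \cP_n$ in the full-measure set where all clock-ring times are distinct; then $\cP$ has only finitely many points in $\Lambda_n \times [-t,0]$, say $p_1,\dots,p_N$. For any jump sequence $\cE^x$, the trajectory $\bS_{[-t,0]}^{x,\cP}$ is a \emph{deterministic} function of $\cE^x$ and $\cP$ (each jump is dictated by the next clock ring of $\cP$ at the current site), it performs at most $N$ jumps before being absorbed, and it collects a subset of $\{p_1,\dots,p_N\}$; in particular there are at most $(2d)^N$ possible trajectories. Consequently, for fixed $(Z_p)_p$, the integrand $h(S_{-t}^{x,\cP},0) + \sum_{p\in \bS_{[-t,0]}^{x,\cP}} Z_p$ is bounded in absolute value, uniformly over $\cE^x$, by the $(\cP,(Z_p))$-measurable constant $\max_{z\in\Lambda_n}|h(z,0)| + \sum_{i=1}^N |Z_{p_i}|$, so Fubini applies and we may write
\[
h^\cP(x,t) = c_x + \sum_{i=1}^N a_{x,i}\, Z_{p_i}, \qquad c_x := \mathbb E_{\cE^x}\big[h(S_{-t}^{x,\cP},0)\big], \quad a_{x,i} := \mathbb P_{\cE^x}\big(p_i \in \bS_{[-t,0]}^{x,\cP}\big).
\]

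Given this representation, the conclusions are immediate. The map $(z_1,\dots,z_N) \mapsto \big(c_x + \sum_i a_{x,i} z_i\big)_{x\in\Lambda_n}$ is affine and $(Z_{p_1},\dots,Z_{p_N})$ is a standard Gaussian vector, so $\bh^\cP(t)$ is multivariate Gaussian. Taking expectations and using $\mathbb E[Z_{p_i}]=0$ gives $\mathbb E[h^\cP(x,t)] = c_x$, the claimed mean. For the covariance, bilinearity together with $\mathrm{Cov}(Z_{p_i},Z_{p_j}) = \one\{i=j\}$ yields $\cov\big[h^\cP(x,t),h^\cP(y,t)\big] = \sum_{i=1}^N a_{x,i}\, a_{y,i}$. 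Finally, since $\cE^x$ and $\cE^y$ are independent, $a_{x,i}\, a_{y,i} = \mathbb P_{\cE^x,\cE^y}\big(p_i \in \bS_{[-t,0]}^{x,\cP}\text{ and } p_i \in \bS_{[-t,0]}^{y,\cP}\big)$, and pulling the expectation outside the finite sum over $i$ identifies the right-hand side with $\mathbb E_{\cE^x,\cE^y}\big[\,\big|\bS_{[-t,0]}^{x,\cP}\cap \bS_{[-t,0]}^{y,\cP}\big|\,\big]$, as asserted (the intersection being of the collected Poisson points, all of which lie among $p_1,\dots,p_N$).

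I do not expect a genuine obstacle here. The only point demanding care is the bookkeeping of the previous paragraph: verifying that, for fixed admissible $\cP$, a BTRW trajectory is a deterministic function of finitely many jumps, so that $h^\cP(x,t)$ is honestly a \emph{finite} linear combination of the $Z_p$ rather than an a priori infinite series. This finiteness is what licenses the Fubini interchanges, the identification of the Gaussian law, and the bilinearity computation of the covariance; the rest is routine.
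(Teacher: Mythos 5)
Your proof is correct and follows essentially the same route as the paper's: express $\bh^\cP(t)$ as an affine function of the finitely many independent Gaussians $(Z_p)_p$ attached to the clock rings in $\Lambda_n\times[-t,0]$, then read off the mean and covariance from the coefficients $a_{x,i}=\mathbb P_{\cE^x}(p_i\in \bS_{[-t,0]}^{x,\cP})$. The only addition is your explicit finiteness/Fubini justification at the start, which the paper leaves implicit.
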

	
	\begin{proof}
		We start by expressing $\bh^{\cP}(t)$ as a linear transformation of a sequence of independent standard Gaussians $(Z_p)_{p\in \cP}$. First of all, for a fixed $\cP$, for each $t,$ let $N(t)$ be the total number of clock rings in the spacetime slab $\Lambda_n \times [-t,0]$. Let $-\tau_{N(t)} < - \tau_{N(t)- 1}<... <-\tau_1<0$ be these clock rings ordered in time, and recall that the $Z_p$s can be labeled according to which time $p$ corresponds to. Let $\vec{Z} = (Z_{-\tau_1},..., Z_{-\tau_{N(t)}})$ be this $N(t)\times 1$ standard Gaussian vector.   
		
		By construction, the $|\Lambda_n| \times 1$ random vector $\bh^{\cP}(t) = (h^{\cP}(x,t))_{x\in \Lambda_n}$ can be expressed as 
		\begin{align*}
		h^{\cP}(x,t) & = \mathbb E_{\cE^x}[h(S_{-t}^{x,\cP},0)] + \sum_{\bS'_{[-t,0]}}\Big[ \mathbb P_{\cE^x}(\bS_{[-t,0]}^{x,\cP} = \bS'_{[-t,0]}) \big[\sum_{j: -\tau_j \in \bS'_{[-t,0]}} Z_{-\tau_j}\big]\Big] \\ 
		& =  \mathbb E_{\cE^x}[h(S_{-t}^{x,\cP},0)] + \sum_{1\le j\le N(t)} Z_{-\tau_j} \Big[\sum_{\bS'_{[-t,0]}\ni -\tau_j} \mathbb P_{\cE^x}(\bS_{[-t,0]}^{x,\cP} = \bS'_{[-t,0]})\Big]\,.
		\end{align*}
		where the sums are over feasible BTRW trajectories $\bS'_{[-t,0]}$ starting from $x$ (naturally identified with the times of the Poisson points it collects). 
		As such, we can express $$\bh^{\cP}(t) = \bm_t^{\cP} + \bA_t^{\cP} \vec{Z}\,,$$
		where $\bm_t^{\cP} = (m_t^\cP(x))_{x\in \Lambda_n}$ is a $|\Lambda_n| \times 1$ vector and $\bA_t^{\cP}$ is a $|\Lambda_n| \times N(t)$ matrix given as follows: 
		\begin{align*}
		\bm_t^{\cP}(x) = \mathbb E_{\cE^x}[h(S_{-t}^{x,\cP},0)] \qquad \mbox{and}\qquad \bA_t^{\cP} (x,j) =  \mathbb P_{\cE^x}(-\tau_j \in \bS_{[-t,0]}^{x,\cP})\,.
		\end{align*}

		Observe that for fixed $\cP$, neither $\bm_t^{\cP}$ nor $\bA_t^\cP$ are random. 
		This expression for $\bh^{\mathcal P}(t)$ as a linear transformation of $\vec{Z}$ thus implies that the former is distributed as a Gaussian random vector having 
		\begin{align*}
		\mathbb E[h^{\cP}(t)]  = \bm_t^{\cP} \qquad \mbox{and covariance matrix} \qquad \bSigma_t^{\cP} = \bA_t^{\cP} (\bA_t^{\cP})^{\mathsf{T}}\,.
		\end{align*}
		(The latter expression uses the fact that the covariance matrix of $\vec{Z}$ is the identity.) 
		The desired expression for the mean therefore follows immediately from the above. The expression for the covariance can be seen by writing 

		\begin{align*}
		\bSigma_t^{\cP}(x,y) = \sum_j \mathbb P_{\cE^x}(-\tau_j\in \bS_{[-t,0]}^{x,\cP}) \mathbb P_{\cE^y}(-\tau_j\in \bS_{[-t,0]}^{y,\cP})\,.
		\end{align*}
		
		This is exactly the expected number of clock rings shared by the two walks, under a product measure over the jumps $\cE^x,\cE^y$ yielding the expression desired by the proposition. 
	\end{proof}
	
	Let $G_n(\cdot,\cdot)$ be the Green's function for the random walk on $\Lambda_n$ killed at $\partial \Lambda_n$, i.e., 
	\begin{align}\label{eq:Greens-function}
	G_n(x,y) = \mathbb E\Big[\sum_{k\ge 0} \mathbf 1\{Y^x_k= y\}\Big]\,,
	\end{align}
	where $(Y^x_k)_{k\ge 0}$ is the simple random walk on $\Lambda_n\cup \partial \Lambda_n$ killed upon hitting $\partial \Lambda_n$. For convenience, we drop $n$ from the notation when understood from context and denote $G_n$ by $G$.  
	Recall that $G(\cdot,\cdot)$ is the covariance matrix for the DGFF on $\Lambda_n$. We will use $\bH_t^\cP$ to denote the heat kernel matrix for the BTRW $S_{-t}^{\cdot,\cP}$, i.e., for fixed $\cP= \cP_n$, for every $x,y\in \Lambda_n$, 
	\begin{align}\label{eq:H-t-P}
	\bH_t^\cP(x,y) = \mathbb P_{\cE^x}(S_{-t}^{x,\cP} = y)\,.
	\end{align}
	While Proposition~\ref{prop:mean-and-variance-of-h} included explicit expressions for the mean and the covariance in terms of averages of BTRW trajectories, the following result compares the same to their equilibrium counterparts, namely $0$ and $\bG$.

	\begin{proposition}\label{prop:monotonicity-of-mean-variance}
		For almost every $\cP$, we have for every $x,y\in \Lambda_n$
		\begin{align*}
		\lim_{t\to\infty} \mathbb E[h^{\cP}(x,t)] = 0\qquad \mbox{and} \quad \lim_{t\to\infty} \cov[h^{\cP}(x,t)h^{\cP}(y,t)] = \bG(x,y)\,.
		\end{align*}
		Furthermore, for every $t>0$,
		\begin{align*}
		\Big(\bG(x,y) - \cov[h^{\cP}(x,t),h^{\cP}(y,t)]\Big)_{x,y}  =   \Big(\bH_t^{\cP} \bG (\bH_t^{\cP})^{\mathsf{T}}\Big)_{x,y}\,. 
		\end{align*}
	\end{proposition}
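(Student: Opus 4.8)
I would prove the matrix identity $\bG - \bSigma_t^\cP = \bH_t^\cP \bG (\bH_t^\cP)^{\mathsf T}$ first, writing $\bSigma_t^\cP := \bA_t^\cP(\bA_t^\cP)^{\mathsf T}$ as in the proof of Proposition~\ref{prop:mean-and-variance-of-h}; recall this matrix does \emph{not} depend on the initialization, and that $\cov[h^\cP(x,t),h^\cP(y,t)] = \bSigma_t^\cP(x,y)$ for the proposition's (arbitrary, deterministic) $\bh(0)$. Both limiting statements then drop out of this identity together with the fact that the BTRW is a.s.\ absorbed at $\partial\Lambda_n$ in finite time.

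\textbf{The identity via a stationary initialization.} Fix (a.e.) $\cP$ and run the representation \eqref{eq:h-P} from a \emph{random} $\bh(0)\sim\pi_n$ taken independent of $(\cP,(Z_p)_p)$. Since the boundary data is zero, $\mathbb E_{\cE^x}[h(S_{-t}^{x,\cP},0)] = (\bH_t^\cP\bh(0))_x$ with $\bH_t^\cP$ as in \eqref{eq:H-t-P}, so \eqref{eq:h-P} reads $\bh^\cP(t) = \bH_t^\cP\bh(0) + \bA_t^\cP\vec Z$; conditionally on $\cP$ the two summands are independent Gaussians (as $\bh(0)\perp\vec Z$), of covariances $\bH_t^\cP\bG(\bH_t^\cP)^{\mathsf T}$ and $\bSigma_t^\cP$ respectively, so $\cov[\bh^\cP(t)\mid\cP] = \bH_t^\cP\bG(\bH_t^\cP)^{\mathsf T} + \bSigma_t^\cP$. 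On the other hand, by Theorem~\ref{thm:rw-representation-exact-equality} used in its coupled form — so that the equality there is of conditional-on-$\cP$ joint laws of $(\bh(0),\bh^\cP(t))$ — this same $\bh^\cP(t)$ is, conditionally on $\cP$, distributed as the Glauber dynamics $\bh^{\cQ_t}(t)$ with $\cQ_t=\vartheta_t\cP$, started from $\bh(0)\sim\pi_n$. Conditioned on $\cQ_t$, the latter applies to $\bh(0)$ the deterministic sequence of single-site heat-bath resamplings at $\nu_1,\dots,\nu_{N(t)}$, each randomized by a fresh independent Gaussian; since a single such resampling averaged over its Gaussian is a $\pi_n$-invariant kernel, invariance survives composition in this fixed order, hence $\bh^{\cQ_t}(t)\sim\pi_n$ given $\cQ_t$ and so $\cov[\bh^\cP(t)\mid\cP] = \bG$. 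Equating the two expressions gives $\bSigma_t^\cP = \bG - \bH_t^\cP\bG(\bH_t^\cP)^{\mathsf T}$ for a.e.\ $\cP$ and all $t>0$, which is the claimed identity.

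\textbf{The limits.} The discrete skeleton $(Y_k)$ underlying the BTRW is simple random walk on $\mathbb Z^2$ started inside $\Lambda_n$, hence exits the finite box $\Lambda_n$ — equivalently, the BTRW is frozen at a vertex of $\partial\Lambda_n$ — after finitely many steps, occurring at finitely many of the clock times of $\cP$, $\mathbb P_{\cE^x}$-a.s.\ for a.e.\ $\cP$. Consequently, for each fixed $\cP$ and all $x,y\in\Lambda_n$, $\bH_t^\cP(x,y)=\mathbb P_{\cE^x}(S_{-t}^{x,\cP}=y)\to 0$ as $t\to\infty$; since $\bH_t^\cP$ is a fixed finite matrix and $\bG$ is fixed, $\bH_t^\cP\bG(\bH_t^\cP)^{\mathsf T}\to 0$, and the identity yields $\cov[h^\cP(x,t),h^\cP(y,t)]\to\bG(x,y)$. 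For the mean, $\mathbb E[h^\cP(x,t)] = \mathbb E_{\cE^x}[h(S_{-t}^{x,\cP},0)]$ by Proposition~\ref{prop:mean-and-variance-of-h}, and since $S_{-t}^{x,\cP}\in\partial\Lambda_n$ for all large $t$ (a.s.\ over $\cE^x$) and $\bh(0)$ vanishes on $\partial\Lambda_n$, we have $h(S_{-t}^{x,\cP},0)\to 0$ pointwise with $|h(S_{-t}^{x,\cP},0)|\le\|\bh(0)\|_\infty$, so dominated convergence gives $\mathbb E[h^\cP(x,t)]\to 0$.

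\textbf{Main obstacle.} Everything here is essentially bookkeeping except the identity, and within it the one genuine input: that conditioned on the entire clock-ring realization $\cQ_t$, the heat-bath dynamics started at $\pi_n$ remains at $\pi_n$. This rests on $\pi_n$-invariance of each single-site heat-bath kernel together with the fact that this invariance is not destroyed by fixing an arbitrary deterministic update order, because the Gaussians driving the updates are independent of both $\bh(0)$ and $\cQ_t$. One should also take care to invoke Theorem~\ref{thm:rw-representation-exact-equality} through its explicit coupling (rather than merely the distributional equality), so that the two computations of $\cov[\bh^\cP(t)\mid\cP]$ are genuinely of the same conditional law.
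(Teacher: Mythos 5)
Your proof is correct and follows the same route as the paper's: run the BTRW representation from a stationary initialization $\bh(0)\sim\pi$ independent of $(\cP,\vec Z)$, use that conditionally on $\cP$ the resulting process is a deterministic sequence of single-site heat-bath resamplings and hence remains $\pi$-distributed, and decompose the covariance into the initialization-driven part $\bH_t^\cP\bG(\bH_t^\cP)^{\mathsf T}$ and the noise-driven part $\bSigma_t^\cP$ (which is initialization-independent); the limits then follow from a.s.\ absorption of the BTRW exactly as in the paper. Your write-up is a slightly more streamlined matrix-form of the paper's argument (the paper introduces an auxiliary chain $\bh_3$ started from zero and swaps it for $\bh_1$ using shift-invariance of the noise covariance, which is the same content as your observation that $\bSigma_t^\cP$ does not depend on $\bh(0)$), and you correctly flag the one genuine input—invariance of $\pi_n$ under an arbitrary fixed update order—which the paper asserts tacitly with "by definition of the Glauber dynamics."
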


	\begin{proof}[\textbf{\emph{Proof of Proposition~\ref{prop:monotonicity-of-mean-variance}}}]
		Fix any two initializations $\bh_1(0)$, $\bh_2(0)$, and let $\bh_1(t)$, $\bh_2(t)$ be their respective time evolutions. Use the same Poisson noise field $\cP$ and the same Gaussians $(Z_p)_{p}$ for both dynamical realizations. Let $g(x,0) = h_2(x,0) - h_1(x,0)$ for all $x\in \Lambda_n$. By Proposition~\ref{prop:mean-and-variance-of-h} and linearity of expectation, 
		\begin{align*}
		\big|\mathbb E[h_2^{\cP}(x,t)]- \mathbb E[h_1^{\cP}(x,t)]\big| = \big|\mathbb E_{\cE^x}[g(S_{-t}^{x,\cP},0)]\big| \le \|g(\cdot,0)\|_\infty \mathbb P_{\cE^x}(S_{-t}^{x,\cP}\notin \partial \Lambda_n)\,.
		\end{align*}
		Now observe that as long as $\cP_{\{v\}}$ is infinite for every $v$ (an event which has probability one), the probability above goes to zero as $t\to\infty$ (recalling that the set $\partial \Lambda_n$ is absorbing). This implies the convergence of the expectation, since if $\bh_2(0) \equiv 0$, then $\mathbb E[\bh_2^\cP(t)] \equiv 0$ for all $t>0$. 
		
		Now consider the difference in covariances. For fixed $\cP$ and using the same $(Z_p)_{p\in \cP}$, by the representation of Theorem~\ref{thm:rw-representation-exact-equality}, we have 
		\begin{align*}
		h_2^{\cP}(x,t)  - \mathbb E[h_2(S_{-t}^{x,\cP},0)] = h_1^{\cP}(x,t) - \mathbb E[h_1(S_{-t}^{x,\cP},0)] = \mathbb E_{\cE^x} \Big[\sum_{p\in \bS_{[-t,0]}^{x,\cP}} Z_p\Big]\,.
		\end{align*}
		The right-hand side quantity is exactly $h_3^{\cP}(x,t)$ (viewed as a random variable in $(Z_p)_p$), initialized from $h_3(x,0) = 0$ for all $x$. We can then rewrite
		$$h_2^{\cP}(x,t) = \mathbb E_{\cE^x}[h_2(S_{-t}^{x,\cP},0)] + h_3^\cP(x,t)\,.$$
		Let $\bh_2(0)$ be drawn from $\pi$ (independently of $\cP$ and $(Z_p)$ and hence of $h_3^\cP(x,t)$). Under both the randomness of $\pi$ and the randomness of $(Z_p)_p$ (but conditionally on $\cP$), by definition of the Glauber dynamics, $\bh_2^\cP(t)$ is distributed as $\pi$ for all $t$. Therefore, we get for almost every fixed $\cP$, 
		\begin{align*}
		\bG(x,y) & = \cov[h_2^{\cP}(x,t), h_2^{\cP}(y,t)] \\
		& = \cov_\pi\big[\mathbb E_{\cE^x}[h_2(S_{-t}^{x,\cP},0)], \mathbb E_{\cE^y}[h_2(S_{-t}^{y,\cP},0)]\big] + \cov[h_3^{\cP}(x,t), h_3^{\cP}(y,t)] \\
		& = \cov_\pi\big[\mathbb E_{\cE^x}[h_2(S_{-t}^{x,\cP},0)], \mathbb E_{\cE^y}[h_2(S_{-t}^{y,\cP},0)]\big] + \cov[h_1^{\cP}(x,t), h_1^{\cP}(y,t)]\,,
		\end{align*}
		where we used the fact that a deterministic shift in the initialization does not change the covariance in $(Z_p)$ to change $h_3^\cP$ to $h_1^\cP$. Subtracting the second term from both sides, and then swapping expectations in the first term, we get 
		\begin{align*}
		\bG(x,y) - \cov[h_1^{\cP}(x,t), h_1^{\cP}(y,t)] & = \mathbb E_{\pi}\big[\mathbb E_{\cE^x,\cE^y}[h_2(S_{-t}^{x,\cP},0) h_2(S_{-t}^{y,\cP},0)]\big] \\ 
		& = \mathbb E_{\cE^x,\cE^y}[\bG({S_{-t}^{x,\cP},S_{-t}^{y,\cP})}]\,,
		\end{align*}
		where the expectation $\mathbb E_{\cE^x,\cE^y}$ is under the product distribution $\mathbb E_{\cE^x}$ and $\mathbb E_{\cE^y}$. This last expectation is then easily seen to be exactly the $x,y$'th entry of the matrix $\bH_{t}^{\cP}\bG (\bH_{t}^{\cP})^{\mathsf{T}}$. Notice that this quantity is non-negative since all entries of $\bG(x,y)$ are non-negative. The last thing to show is that this quantity goes to zero as $t\to\infty$. This follows by observing that 
		\begin{align*}
		\mathbb E_{\cE^x,\cE^y} [\bG({S_{-t}^{x,\cP},S_{-t}^{y,\cP})}] \le 
		\|\bG\|_\infty \mathbb P_{\cE^x}(S_{-t}^{x,\cP}\notin \partial \Lambda_n)\mathbb P_{\cE^y}(S_{-t}^{y,\cP}\notin \partial \Lambda_n)\,,
		\end{align*}
		and recalling that almost surely both probabilities on the right-hand side go to zero as $t\to\infty$. 
	\end{proof}
	
	\section{Exponential decay rate of the mean process}
	In the previous section, we expressed the mean process of $\bh$, when conditioned on $\cP$, in terms of its BTRW representation. If the initial data is uniform (e.g., the all-$n$ initialization), then the mean $\mathbb E[h^\cP(x,t)]$ is evidently governed by the probability that $S_{-t}^{x,\cP}$ has not been absorbed into $\partial\Lambda_n$---what we call the survival probability of the BTRW. 
	
	Harmonic measure estimates for the BTRW in a fixed Poisson noise (i.e., quenched) can be quite different from those in discrete or continuous time; for instance if at time $-t$, the most recent clock ring were at site $v$, the probability that $S_{-t}^{x,\cP} = v$ is zero. Nonetheless, the survival probability is a sufficiently averaged quantity that for most $\cP$ it behaves as it would for the usual discrete random walk, for which the spectral theory, and therefore the quenched survival probability after $t$ steps, can be sharply characterized on the grid graph. That is the aim of this section.

	In what follows, the randomness will often come from the jump sequences $(\cE^x)_{x\in \Lambda_n}$ of the BTRW, with $\cP_\infty$ fixed. We will denote probabilities only over this randomness by $\mathbb P_{\cE}$. 
	The use of $\mathbb P$ in the context of the BTRW will be over both sources of randomness, $\cP_\infty$ and $(\cE^x)$.

	\medskip
	\noindent \textbf{Other notational disclaimers.} Throughout the rest of the paper, all constants in $O(1), \Omega(1)$ and $o(1)$ etc should be understood to hold with constants that are independent of $n$. In particular, all statements that follow should be understood to hold uniformly over $n$ sufficiently large. For ease of notation, we will use $c,C$ to denote the existence of constants independent of $n$ for which the relevant claim holds; these letters can denote different constants from line to line. Finally, for readability, we will ignore rounding issues and integer effects as it will be clear how to handle with minimal modifications.

	\subsection{Preliminaries for discrete-time random walks in {$\Lambda_n$}}\label{subsec:random-walk-prelims}
	We begin by collecting (standard) preliminaries on the spectrum of the Laplacian on $\Lambda_n$ and the harmonic measure and Green's function of the discrete-time simple random walk on~$\Lambda_n$. We refer to~\cite{Lawler-RW-book} for more details, and to Appendix~\ref{sec:appendix} for precise references for and proofs of these facts.
	
	Consider the (normalized) discrete Laplace operator $\Delta_n$ on $\Lambda_n$, given by the $|\Lambda_n|\times |\Lambda_n|$ matrix 
	\begin{align}\label{eq:Laplacian}
	\Delta_n(x,y) = \begin{cases}
	-1 & x=y \\
	\frac{1}{4} & x \sim y \\
	0 & \mbox{else}
	\end{cases}\,.
	\end{align}
	Let $\lambda_\one$ be the smallest eigenvalue of $-\Delta_n$, and let $\varphi_\one$ denote the corresponding normalized eigenvector. Then 
	\begin{align}\label{eq:top-eigenvalue}
	\lambda_\one = 1-\cos\Big(\frac{\pi}{n}\Big)\,,
	\end{align}
	and for $x= (x_1,x_2) \in \Lambda_n$, 
	\begin{align}\label{eq:top-eigenvector}
	\varphi_\one(x) = \prod_{j=1,2} \bigg(\sqrt{\frac{2}{n}} \sin\Big(\frac{x_j \pi}{n}\Big)\bigg)\,.
	\end{align}
	With that, we can also define a different normalization of this top eigenvector:  
	$$
	 \widehat \varphi_\one(x) := \varphi_\one(x) \sum_{y\in \Lambda_n} \varphi_\one(y)\,.
	 $$
	\begin{fact}\label{fact:order-of-magnitudes}
		For $\lambda_\one$ and $\varphi_\one$ as above, $\lambda_\one = \frac{\pi^2}{2n^2} + O(n^{-4})$ and $\max_{x\in \Lambda_n} \varphi_\one(x) = O(n^{-1})$. Further,  $\max_{x\in \Lambda_n}\widehat \varphi_\one(x) = O(1)$ and $\min_{x:d(x,\partial \Lambda_n) \ge n/4}\widehat \varphi_\one(x) = \Omega(1)$. 
	\end{fact}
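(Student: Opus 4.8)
The plan is to obtain all four estimates directly from the closed forms in~\eqref{eq:top-eigenvalue}--\eqref{eq:top-eigenvector}; no probability or dynamics enters, only elementary trigonometry and Taylor expansion. First I would dispatch the estimate on $\lambda_\one$: expanding $1-\cos\theta = \tfrac{\theta^2}{2}-\tfrac{\theta^4}{24}+O(\theta^6)$ with $\theta=\pi/n$ gives $\lambda_\one = \tfrac{\pi^2}{2n^2}+O(n^{-4})$ at once. Next, for the $\ell^\infty$ bound on $\varphi_\one$, I would simply bound $|\sin(x_j\pi/n)|\le 1$ in~\eqref{eq:top-eigenvector}, so that $\|\varphi_\one\|_\infty \le \prod_{j=1,2}\sqrt{2/n} = 2/n = O(n^{-1})$.

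The only genuine content is in the two statements about $\widehat\varphi_\one$, where the key step is to evaluate the normalizing sum $\sum_{y\in\Lambda_n}\varphi_\one(y)$. Since $\varphi_\one$ factorizes over coordinates, this sum equals $\big(\sqrt{2/n}\,\sum_{k=1}^{n-1}\sin(k\pi/n)\big)^2$, and the classical closed form $\sum_{k=1}^{n-1}\sin(k\pi/n)=\cot(\pi/(2n))$ together with $\cot z = z^{-1}+O(z)$ yields $\sum_{y}\varphi_\one(y) = \tfrac 2n\cot^2(\pi/(2n)) = \tfrac{8n}{\pi^2}\big(1+O(n^{-2})\big)$. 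Substituting this back, $\widehat\varphi_\one(x) = \tfrac{16}{\pi^2}\sin(x_1\pi/n)\sin(x_2\pi/n)\big(1+O(n^{-2})\big)$; bounding the sines by $1$ gives $\max_x\widehat\varphi_\one(x) = O(1)$, while if $d(x,\partial\Lambda_n)\ge n/4$ then each $x_j\in[n/4,3n/4]$, forcing $\sin(x_j\pi/n)\ge\sin(\pi/4) = 1/\sqrt 2$, hence $\widehat\varphi_\one(x)\ge \tfrac{8}{\pi^2}(1+o(1)) = \Omega(1)$.

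I do not expect a real obstacle here: the statement is a bookkeeping collection of standard spectral facts for the discrete Laplacian on a box. The only mildly technical point is the asymptotics of the trigonometric sum $\sum_{k=1}^{n-1}\sin(k\pi/n)$, which is classical and can simply be cited (e.g.\ from~\cite{Lawler-RW-book}) or relegated to Appendix~\ref{sec:appendix}, as the surrounding text already anticipates.
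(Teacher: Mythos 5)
Your proof is correct and follows essentially the same elementary route as the paper: Taylor expand the cosine for $\lambda_\one$, and bound the sines by $1$ for the $\ell^\infty$ estimates. The one place you do slightly more than the paper is for $\widehat\varphi_\one$: the paper simply observes that $\sum_y\varphi_\one(y)=O(n)$ because each summand is $O(n^{-1})$ and there are $O(n^2)$ of them, and that it is $\Omega(n)$ because all terms are positive and $\Omega(n^2)$ of them are $\Omega(n^{-1})$, whereas you evaluate the sum exactly via $\sum_{k=1}^{n-1}\sin(k\pi/n)=\cot(\pi/(2n))$ and Taylor expansion, obtaining $\sum_y\varphi_\one(y)=\tfrac{8n}{\pi^2}(1+O(n^{-2}))$. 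Both are correct; your version is more precise than Fact~\ref{fact:order-of-magnitudes} requires, but the exact constant $\tfrac{8n}{\pi^2}$ you recover is in fact used later in the paper (Section~7, in the definition $\widehat\varphi_\one=\tfrac{8n}{\pi^2}(1+o(1))\varphi_\one$ used for the cutoff-profile lower bound), so deriving it here is not wasted effort.
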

	Let $(Y^x_k)_{k\ge 0}$ be the discrete-time simple random walk on $\Lambda_n$ that is killed as soon as it hits $\partial \Lambda_n$. Notice that if $P$ is the transition matrix for $Y_k$, then $-\Delta_n = I-P$. Using this relation, we obtain the following sharp bound on the survival probability of $(Y_k^x)_{k\ge 0}$ in $\Lambda_n$.
	
	\begin{lemma}\label{lem:survival-probability-srw}
		For all $k$ such that $n^2 = o(k)$, and all $x\in \Lambda_n$, we have 
		\begin{align*}
			\mathbb P(Y_k^x\notin \partial \Lambda_n) = \widehat \varphi_\one(x) (1-\lambda_\one)^k + o(e^{ - \lambda_\one k})\,.
		\end{align*}
		If $(\widetilde Y_t)_{t\ge 0}$ is the rate-1 continuous time version of $(Y_k)$, then for all $x\in \Lambda_n$ and $t$ such that $n^2 = o(t)$, 
		\begin{align*}
			\mathbb P(\widetilde Y_t^x \notin \partial \Lambda_n)  = \widehat \varphi_\one(x) e^{ - \lambda_\one t} + o(e^{ - \lambda_\one t})\,.
		\end{align*}
	\end{lemma}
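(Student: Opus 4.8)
The plan is to diagonalize the transition matrix $P$ of the killed random walk and extract the leading term. Since $-\Delta_n = I - P$ and $-\Delta_n$ is symmetric (after the standard conjugation that makes the simple random walk on a box self-adjoint — here the walk is already reversible with respect to counting measure so $P$ is symmetric), write the spectral decomposition $P = \sum_{k} (1-\lambda_k) \varphi_k \varphi_k^{\mathsf T}$, where $0 < \lambda_\one < \lambda_2 \le \cdots$ are the eigenvalues of $-\Delta_n$ with orthonormal eigenvectors $\varphi_k$. The survival probability is then
\begin{align*}
\mathbb P(Y_k^x \notin \partial \Lambda_n) = \sum_{y\in \Lambda_n} P^k(x,y) = \sum_{j} (1-\lambda_j)^k \varphi_j(x) \sum_{y\in\Lambda_n} \varphi_j(y)\,.
\end{align*}
The $j=\one$ term is exactly $(1-\lambda_\one)^k \varphi_\one(x) \sum_y \varphi_\one(y) = \widehat\varphi_\one(x)(1-\lambda_\one)^k$ by the definition of $\widehat\varphi_\one$, so it remains to show the remaining sum is $o(e^{-\lambda_\one k})$.

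For the error term, I would bound $|\sum_{j\ge 2} (1-\lambda_j)^k \varphi_j(x)\sum_y \varphi_j(y)|$ by splitting according to whether $\lambda_j \le 1$ or $\lambda_j > 1$ (the latter corresponding to $|1-\lambda_j| < 1$ with possibly negative sign, i.e.\ near-antiperiodic modes). Using the explicit product-of-sines eigenbasis on the box $\Lambda_n = \llb 1,n-1\rrb^2$ — the eigenvectors are $\varphi_{(a,b)}(x) = \frac 2n \sin(\frac{a\pi x_1}{n})\sin(\frac{b \pi x_2}{n})$ with eigenvalue $\lambda_{(a,b)} = 1 - \frac12(\cos\frac{a\pi}n + \cos\frac{b\pi}n)$ — one has $\sum_y \varphi_{(a,b)}(y) = O(1)$ uniformly (in fact it vanishes unless both $a,b$ are odd, and then it is $\cot(\frac{a\pi}{2n})\cot(\frac{b\pi}{2n})\cdot \frac{4}{n}\cdot$ a bounded factor, which is $O(1)$ summed appropriately but here I only need a per-mode bound), and $|\varphi_{(a,b)}(x)| = O(1/n)$. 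Crucially, for every mode other than the top one, the second eigenvalue satisfies $1-\lambda_2 \le 1 - \lambda_\one - c/n^2$ on the low-frequency side, giving a ratio $(1-\lambda_2)/(1-\lambda_\one) \le 1 - cn^{-2}$; raising to the power $k \gg n^2$ makes each such term exponentially smaller than $e^{-\lambda_\one k}$, and since there are only $O(n^2)$ modes the whole sum over $2 \le j$ with $\lambda_j \le 1$ is $O(n^2) \cdot e^{-\lambda_\one k}(1-cn^{-2})^k = o(e^{-\lambda_\one k})$ once $k/(n^2\log n)\to\infty$ — but to get it for \emph{all} $k$ with $n^2 = o(k)$ I instead group: the sum of $(1-\lambda_j)^k$ over all $j$ equals the return-type quantity $\sum_y P^k(y,y)$-style trace which is itself $O(1)$ bounded by comparison with the unkilled walk, and more simply, $\sum_{j\ge 2}(1-\lambda_j)^k = \mathrm{tr}(P^k) - (1-\lambda_\one)^k$ can be bounded using $\mathrm{tr}(P^{2\lfloor k/2\rfloor}) = \sum_y P^{2\lfloor k/2\rfloor}(y,y) \le |\Lambda_n| \max_y P^{2\lfloor k/2\rfloor}(y,y)$, and the local CLT / heat-kernel bound $P^{m}(y,y) = O(1/m)$ for the $2$D walk gives $\mathrm{tr}(P^k) = O(|\Lambda_n|/k) = O(n^2/k)$. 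Combining with the crude bounds $|\varphi_j(x)\sum_y\varphi_j(y)| = O(1)$ and the Cauchy–Schwarz or direct-sum estimate, the tail is $O((n^2/k))\cdot$(something), which one must massage to beat $e^{-\lambda_\one k} \asymp n^{-\Theta(k/n^2)}$.

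The cleanest route, and the one I would actually write, is the two-regime argument: (i) for $n^2 \ll k \le C n^2 \log n$, use the spectral sum directly, bounding the $j\ge 2$ part by $O(n^2)(1-\lambda_2)^k \le O(n^2) e^{-\lambda_2 k}$ and noting $\lambda_2 - \lambda_\one \ge c/n^2$ so this is $e^{-\lambda_\one k}\cdot O(n^2) e^{-ck/n^2} = o(e^{-\lambda_\one k})$ as soon as $k/n^2 \to \infty$; (ii) separately handle the ``folded'' modes where $1-\lambda_j$ is negative and close to $-1$ — here $\lambda_j$ is close to $2$, and since $k$ will be taken even or we pair consecutive steps, $(1-\lambda_j)^k = |1-\lambda_j|^k \le (1-\lambda_\one)^k$ fails, so instead observe $|1-\lambda_j| \le 1 - \lambda_\one$ except for $j$ near the \emph{bottom} of the spectrum of $P$, i.e.\ $\lambda_j \ge 2 - \lambda_\one$; but the eigenvalue $2 - \lambda_\one$ corresponds to the most oscillatory mode $(n-1,n-1)$, for which $\sum_y \varphi_{(n-1,n-1)}(y) = \sum_y (-1)^{x_1+x_2}\varphi_\one(y)$-type alternating sum, which is $O(1/n)$ rather than $O(1)$ — so that mode's contribution is $O(1/n^2)(1-\lambda_\one)^k = o(e^{-\lambda_\one k})$ too. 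The continuous-time statement then follows from the discrete one by Poissonization: $\mathbb P(\widetilde Y_t^x \notin \partial\Lambda_n) = \sum_k e^{-t} \frac{t^k}{k!} \mathbb P(Y_k^x \notin \partial\Lambda_n) = \sum_j \varphi_j(x)\sum_y\varphi_j(y) \, e^{-\lambda_j t}$, and the same per-mode estimates give the top term $\widehat\varphi_\one(x) e^{-\lambda_\one t}$ plus $o(e^{-\lambda_\one t})$, with the error analysis slightly easier since there are no sign-alternation issues in continuous time.

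The main obstacle is the bookkeeping of the ``high-frequency'' (near-antiperiodic) modes in the discrete-time statement: because $P$ has eigenvalues ranging through $(-1+\lambda_\one, 1-\lambda_\one)$, naive absolute-value bounds on $(1-\lambda_j)^k$ do not immediately decay faster than $(1-\lambda_\one)^k$, so one genuinely needs the extra cancellation coming from $\sum_y \varphi_j(y)$ being small (or zero) for the most oscillatory modes, or alternatively one must restrict to even $k$ / pair up steps and exploit that the problematic modes are few in number with controlled mass. I would organize this by recording, as an auxiliary lemma from the explicit sine eigenbasis, the bound $|\varphi_{(a,b)}(x)\sum_{y}\varphi_{(a,b)}(y)| \le C \min\{1, (a\wedge b)^{-1}\}\cdot$(symmetric factor) — or more robustly just $\sum_{(a,b)} |\sum_y \varphi_{(a,b)}(y)|\cdot\max_x|\varphi_{(a,b)}(x)| = O(\log n)$ — and then feeding it into a split over $\{\lambda_j \le 1 + \lambda_\one\}$ versus the $O(1)$ many modes with $\lambda_j$ near $2$. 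Everything else is routine; the cited Appendix~\ref{sec:appendix} presumably already contains the requisite eigenbasis facts and the local-CLT heat-kernel bound, so in the body I would just invoke those and present the two-regime split.
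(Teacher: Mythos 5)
Your overall attack is the paper's own: diagonalize $P = I+\Delta_n$, peel off the $\mathbf 1$-mode to get $\widehat\varphi_\one(x)(1-\lambda_\one)^k$, and show the remainder is $o(e^{-\lambda_\one k})$ (with the continuous-time case coming for free, or by replacing $(1-\lambda_{\mathbf i})^k$ with $e^{-\lambda_{\mathbf i}t}$). Two remarks on the error estimate.

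\textbf{The ``folded''/antiperiodic modes.} You are right to flag these, and your fix is correct. For $\mathbf i = (n-1,n-1)$ one has $\lambda_{\mathbf i} = 2-\lambda_\one$, hence $|1-\lambda_{\mathbf i}|^k = (1-\lambda_\one)^k$ --- the same decay rate as the top mode, not faster --- so any bound of the form $|1-\lambda_{\mathbf i}|^k \le e^{-\lambda_{\mathbf i}k}$ genuinely fails for those $\mathbf i$ with $\lambda_{\mathbf i}>1$. (The paper's written proof invokes $1-x\le e^{-x}$ at this step without qualification, which is not literally correct for those modes; what saves it is exactly the observation you make.) The rescue is that $\langle \mathbf 1_{\Lambda_n},\varphi_{\mathbf i}\rangle\,\varphi_{\mathbf i}(x)$ is not $O(1)$ near the bottom of the spectrum: in 1D $\langle\mathbf 1,\varphi_{n-a}\rangle = \sqrt{2/n}\,\tan(a\pi/(2n)) = O(a\,n^{-3/2})$, so in 2D the mode $(n-a,n-b)$ carries coefficient $O(ab\,n^{-4})$, which more than compensates. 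Packaging this cleanly, one can simply write $\widehat\varphi_{\mathbf i}(x):=\varphi_{\mathbf i}(x)\langle\mathbf 1,\varphi_{\mathbf i}\rangle$, note $|1-\lambda|\le e^{-(\lambda\wedge(2-\lambda))}$, and use the reflection $\mathbf i\mapsto (n-i_1,n-i_2)$ together with $\widehat\varphi_{(n-i_1,n-i_2)}(x) = O(i_1 i_2/n^4)$ to fold the upper half of the spectrum into the lower half. Your diagnosis of the issue and remedy are correct; this is the genuinely delicate point.

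\textbf{The low-frequency count: here your argument has a gap.} In the ``cleanest route (i)'' you bound the $\{\lambda_j\le 1\}$ contribution by $O(n^2)(1-\lambda_2)^k$ and claim this is $o(e^{-\lambda_\one k})$ ``as soon as $k/n^2\to\infty$.'' It is not: with $\lambda_2-\lambda_\one\asymp n^{-2}$, you get
\[
O(n^2)\,e^{-(\lambda_2-\lambda_\one)k}\cdot e^{-\lambda_\one k} = O\big(n^2 e^{-ck/n^2}\big)\,e^{-\lambda_\one k},
\]
and $n^2 e^{-ck/n^2}\to 0$ requires $k\gg n^2\log n$. In the earlier paragraph you actually noticed this (``once $k/(n^2\log n)\to\infty$''), but the ``cleanest route'' you declare you would write contradicts that observation, and the trace/heat-kernel workaround you float is left unfinished. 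The lemma is stated for every $k$ with $n^2=o(k)$, and is used in the paper precisely in the regime $t=\Theta(n^2\log n)$ and the annealed estimate with $t\ge n^2$, so the borderline range matters.

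The correct replacement, which is what the paper does, is to avoid the crude $O(n^2)$ mode count and instead use the linear growth of the eigenvalue in the Fourier rank:
\[
\lambda_{(i_1,i_2)}\ \ge\ \tfrac12(i_1+i_2)\,\lambda_\one,
\]
which follows from $\lambda_i = 1-\cos(i\pi/n)\ge i\,\lambda_1$. Then, with $O(1)$ coefficients,
\[
\sum_{\mathbf i\ne\one} e^{-\lambda_{\mathbf i}k}\ \le\ \sum_{r\ge 3/2}(2r)^2 e^{-r\lambda_\one k}\ =\ O\big(e^{-\frac32\lambda_\one k}\big)
\]
by summation by parts once $\lambda_\one k\to\infty$, and $e^{-\frac32\lambda_\one k}=e^{-\lambda_\one k}\cdot e^{-\lambda_\one k/2}=o(e^{-\lambda_\one k})$ exactly when $k/n^2\to\infty$. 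This removes the spurious $\log n$ factor that your union bound over $O(n^2)$ modes introduces, and it is the one ingredient you would need to add for the proof to close in the full range claimed.
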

	We also recall the size of the Green's function $G= G_n$ on $\Lambda_n$ which one can find, for instance, in Sections~4 and 8 of~\cite{Lawler-RW-book}. 
	
	\begin{fact}\label{fact:green-function-srw}
		For every $x,y \in \Lambda_n$, we have  
		\begin{align*}
		    G(x,y) \le C\log \Big(\frac{n}{|x-y|\vee 1}\Big)\,.
		\end{align*}
	\end{fact}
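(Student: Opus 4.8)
The plan is to deduce this classical estimate by reducing the domain to a ball and invoking the logarithmic asymptotics of the potential kernel of the planar simple random walk.

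First I would reduce the domain. Since $\Lambda_n$ has Euclidean diameter at most $2n$, for every $y\in\Lambda_n$ one has $\Lambda_n\subseteq B:=B(y,2n)\cap\Z^2$. By the visit-count representation in~\eqref{eq:Greens-function} and the obvious coupling of the two killed walks — the walk killed at $\partial\Lambda_n$ is killed no later than the one killed at $\partial B$ — the Green's function is monotone in the domain, so $G(x,y)=G_n(x,y)\le G_B(x,y)$ for all $x,y\in\Lambda_n$, and it suffices to bound $G_B$. This reduction also tidies up the edge case $|x-y|\gtrsim n$: the bound I would actually establish is $G(x,y)\le\tfrac{2}{\pi}\log\!\big(2n/(|x-y|\vee1)\big)+C$, which is strictly positive, matches the claim up to the multiplicative constant whenever $|x-y|\le n/e$, and gives the correct $O(1)$ bound otherwise.

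Next I would bring in the potential kernel $a\colon\Z^2\to\R_{\ge0}$ of the simple random walk, characterised by $a(0)=0$, $a>0$ off the origin, $a$ discrete harmonic off the origin, and $\tfrac14\sum_{w\sim0}a(w)=1$ (equivalently $(P-I)a=\delta_0$). The key point is that $M_k:=a(Y_k-y)-\#\{0\le j<k\colon Y_j=y\}$ is a martingale for the simple random walk $(Y_k)_k$ started at $x$; applying optional stopping at the exit time $\tau$ of $B$ — a.s.\ finite, with integrability guaranteed because the number of visits to $y$ before $\tau$ has finite mean $G_B(x,y)$ — yields the exact identity $G_B(x,y)=\mathbb E_x[a(Y_\tau-y)]-a(x-y)$. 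I would then insert the standard expansion $a(z)=\tfrac2\pi\log|z|+\kappa+O(|z|^{-2})$, or rather its crude two-sided consequence $\tfrac2\pi\log|z|-C\le a(z)\le\tfrac2\pi\log|z|+C$ for $z\ne0$ together with $a(0)=0$. Since the walk has unit steps, $Y_\tau$ overshoots $\partial B$ by at most one, so $|Y_\tau-y|\le 2n+1$ and $a(Y_\tau-y)\le\tfrac2\pi\log(2n)+C'$; and $a(x-y)\ge\tfrac2\pi\log(|x-y|\vee1)-C$ (using $a(0)=0$ when $x=y$). Subtracting gives the claimed bound.

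I do not anticipate a real obstacle: the statement is essentially contained in the treatment of the potential kernel and killed Green's functions in~\cite{Lawler-RW-book}. The only places needing a little care are (i) the justification of optional stopping for $M_k$ (covered by the finite-expected-visits remark above), and (ii) the boundary and near-diagonal bookkeeping — controlling $a(Y_\tau-y)$ through the unit-step overshoot bound, and separating off the cases $x=y$ and $|x-y|=1$. As an alternative that avoids the potential kernel, one could estimate $G(x,y)=\sum_{k\ge0}\mathbb P_x\big(Y_k=y,\ k<\tau_{\partial\Lambda_n}\big)$ directly by splitting the sum at $k=|x-y|^2$ and $k=n^2$ and using: the on-diagonal bound $\mathbb P_x(Y_k=y)\le C/(k\vee1)$ on the middle range (contributing $O(\log(n/(|x-y|\vee1)))$), the Gaussian off-diagonal decay $\mathbb P_x(Y_k=y)\le Ck^{-1}e^{-c|x-y|^2/k}$ on $k\le|x-y|^2$ (contributing $O(1)$), and the exponential decay of the survival probability — e.g.\ from Lemma~\ref{lem:survival-probability-srw} — for $k\gg n^2$ (contributing $O(1)$).
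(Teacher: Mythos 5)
The paper gives no proof of this Fact: it simply states that the bound ``can be found in Sections~4 and~8 of~\cite{Lawler-RW-book}.'' So there is no in-text argument to compare against; what you wrote is, in effect, the proof that reference supplies, and it is correct. The monotonicity reduction $G_{\Lambda_n}\le G_B$ to a centered ball (or, equivalently, running optional stopping directly in $\Lambda_n$ and noting $|Y_\tau-y|\le\diam(\overline\Lambda_n)\le 2n$), the identity $G_B(x,y)=\E_x[a(Y_\tau-y)]-a(x-y)$ obtained by optional stopping of the potential-kernel martingale, and the two-sided estimate $\tfrac2\pi\log|z|-C\le a(z)\le\tfrac2\pi\log|z|+C$ are exactly the ingredients in Lawler's treatment; your justification of optional stopping via finiteness of expected visits is fine. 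Your honest caveat about the regime $|x-y|\gtrsim n/e$ is well placed: the displayed inequality, read literally, can have a small or negative right-hand side there, and any proof will produce $C\log(n/(|x-y|\vee1))+O(1)$ rather than a clean multiplicative constant. This is immaterial for how the Fact is invoked in the paper (only $G(x,x)=O(\log n)$ is used, in Lemma~\ref{lem:max-bound}), so it should be read as such.

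One small correction to the \emph{alternative} route you sketch at the end: for the tail $k>n^2$, the survival probability $\P_x(k<\tau_{\partial\Lambda_n})\le Ce^{-\lambda_\one k}$ alone is not enough, since $\sum_{k>n^2}Ce^{-\lambda_\one k}\asymp\lambda_\one^{-1}\asymp n^2$. You need to pair it with a near-diagonal heat-kernel factor, e.g.\ by the Markov property at time $k-n^2$,
\begin{align*}
\P_x\big(Y_k=y,\ k<\tau_{\partial\Lambda_n}\big)\ \le\ \P_x\big(k-n^2<\tau_{\partial\Lambda_n}\big)\cdot\max_{z}\P_z\big(Y_{n^2}=y\big)\ \le\ Ce^{-\lambda_\one(k-n^2)}\cdot\frac{C}{n^2}\,,
\end{align*}
whose sum over $k\ge 2n^2$ is $O(1)$ (and $n^2\le k\le 2n^2$ is handled by the on-diagonal bound $C/k$). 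With that fix both routes yield the stated bound.
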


	\subsection{Number of clock rings collected by the random walk in $\cP$}
	The first estimates we require are on the number of Poisson points picked up, i.e., the number of jumps made, by the BTRW. 
	Denote the number of such jumps by time $-t$ as
	\begin{align*}
	N_{-t}^{x,\cP} = \Big|\Big\{s\in \cP: s\in \bS_{[-t,0]}^{x,\cP}\Big\}\Big|\,.
	\end{align*}
	
	\begin{lemma}\label{lem:number-of-clock-rings}
		Fix $t>0$ and $r>0$. With probability $1-{Ce^{ - r/C}}$, the Poisson field $\cP_\infty = \cP_{\mathbb Z^2}$ is such that,  
		\begin{align*}
		\mathbb P_\cE \Big( N_{-t}^{x,\cP_\infty} \notin \big(t - r \sqrt{t}, t+ r \sqrt t\big)  \mbox{ for some $x\in \Lambda_n$}\Big) \le  C n^2 e^{ - (r^2 \wedge r\sqrt{t})/C}\,.
		\end{align*}
	\end{lemma}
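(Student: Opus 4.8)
The statement asserts a quenched-in-$\cP$ estimate on the number of jumps $N_{-t}^{x,\cP}$ made by the BTRW started at $x$, uniformly over $x\in\Lambda_n$. The key structural fact is that, for a \emph{fixed} site $v$, the times at which the Poisson clock at $v$ rings form a rate-$1$ Poisson process on $(-\infty,0]$, so if we ran the walk and forced it to always sit at a single site, its number of clock rings in $[-t,0]$ would be exactly $\mathrm{Poisson}(t)$. The subtlety is that as the walk moves, it collects clock rings from \emph{different} sites, and the identity of those sites is itself determined by $\cP$ (and by the jump sequence $\cE^x$). The plan is therefore to separate the two sources of randomness carefully and use the fact that the gaps between consecutive collected clock rings are, conditionally on the past of the walk and on $\cP$, \emph{not} i.i.d.\ exponentials — but one can still dominate the process both above and below by genuine Poisson processes.

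\textbf{Step 1: reduce to a single starting point via a union bound.} Fix $x\in\Lambda_n$. I will show that with $\cP_\infty$-probability $1-Ce^{-r/C}$ (the ``bad event'' for $\cP$ not depending on $x$, coming from a global regularity statement about $\cP$ restricted to, say, $\Lambda_n\times[-2t,0]$), one has $\mathbb P_\cE(N_{-t}^{x,\cP}\notin(t-r\sqrt t,\,t+r\sqrt t))\le C e^{-(r^2\wedge r\sqrt t)/C}$, and then union bound over the $|\Lambda_n|=O(n^2)$ choices of $x$. So it suffices to get the pointwise-in-$x$ bound with the stated tail and a $\cP$-bad-event probability that is summably small.

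\textbf{Step 2: the stochastic sandwich.} Consider building the walk forward in $-s$ from $s=0$. At each moment the walk sits at some site $Y$, and the next collected clock ring is the next ring of the (rate-$1$) Poisson clock at $Y$ strictly before the current time; crucially, by the memorylessness of the Poisson clock at each site and the fact that \emph{each site has its own independent clock}, the waiting time to the next collected ring, \emph{conditionally on everything observed so far (including which site $Y$ the walk currently occupies and all previously collected rings)}, is exactly an $\mathrm{Exp}(1)$ variable — provided the walk has not yet, at that site, ``used up'' clock rings we already conditioned on. Here one must be slightly careful: when the walk revisits a site $Y$ it has visited before, the relevant clock rings are those strictly earlier than the last time it left $Y$, so the residual waiting time is still $\mathrm{Exp}(1)$ by the strong Markov property of the Poisson clock at $Y$. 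Consequently the \emph{total elapsed time} $T^x_k$ after collecting $k$ rings is stochastically dominated above and below by a sum of $k$ i.i.d.\ $\mathrm{Exp}(1)$'s; in fact I claim one gets the exact distributional identity that $(T^x_k)_k$ is a rate-$1$ renewal (Poisson) process, hence $N_{-t}^{x,\cP}\sim\mathrm{Poisson}(t)$ — but only \emph{before} the walk is absorbed at $\partial\Lambda_n$. Absorption can only \emph{decrease} $N_{-t}^{x,\cP}$, so we always have $N_{-t}^{x,\cP}\preceq\mathrm{Poisson}(t)$, giving the \emph{upper} tail $\mathbb P(N_{-t}^{x,\cP}\ge t+r\sqrt t)\le e^{-(r^2\wedge r\sqrt t)/C}$ by a standard Poisson Chernoff bound (the $r\sqrt t$ term governs the regime $r\gg\sqrt t$ where the tail is only exponential in $r\sqrt t$ rather than Gaussian).

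\textbf{Step 3: the lower tail via a good event on $\cP$.} The lower tail $\mathbb P_\cE(N_{-t}^{x,\cP}\le t-r\sqrt t)$ is where the quenched nature bites: if $\cP$ happens to be very sparse near the walk's trajectory, the walk collects few rings. But this is atypical, and here is where the $\cP$-bad event in Step 1 is used. On the \emph{good} event for $\cP$ — e.g., every space-time box of the form $\{v\}\times[-a-1,-a]$ with $v\in\Lambda_n$, $0\le a\le 2t$ contains a bounded number of points, and more usefully, intervals of length $O(1)$ at every site contain at least one point except in a sparse set — one can lower-bound, site by site, the rate at which the walk collects rings, and thereby stochastically dominate $N_{-t}^{x,\cP}$ \emph{from below} by a (possibly slightly slowed-down) Poisson process of rate $1-o(1)$, or more simply by a sum of i.i.d.\ bounded-mean waiting times whose Cramér transform again yields a tail of order $e^{-(r^2\wedge r\sqrt t)/C}$. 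Combining the upper tail from Step 2 (which needs no good event) with the lower tail from Step 3 (conditional on the good $\cP$-event), and then union bounding over $x\in\Lambda_n$, completes the proof; the factor $Cn^2$ in the statement is exactly this union bound and the $Ce^{-r/C}$ is the probability of the $\cP$-bad event.

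\textbf{Main obstacle.} The delicate point is Step 2's claim that the collected-ring process is \emph{exactly} rate-$1$ Poisson (equivalently, that the inter-ring gaps are genuinely i.i.d.\ $\mathrm{Exp}(1)$ even as the walk moves and revisits sites). One has to argue this cleanly using the independence of the per-site Poisson clocks together with the strong Markov property applied at the (random, walk-determined) times the walk enters a new site — and handle revisits, where the ``clock'' at a site must be read at a time strictly before a previously-conditioned ring, so that only the portion of that site's clock strictly earlier than already-revealed rings is exposed. Getting this conditioning bookkeeping right (so that no dependence is secretly introduced between the jump sequence $\cE^x$ and the residual clock gaps) is the crux; once it is in place, both tails reduce to textbook Poisson/renewal large-deviation estimates, and the $r^2\wedge r\sqrt t$ form is the standard Bernstein-type tail for a Poisson$(t)$ variable.
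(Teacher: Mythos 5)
Your identification of the key distributional fact is right: for each fixed $x$, once averaged over $\cP$, the collected-ring times $(T^x_j)_j$ form a rate-$1$ Poisson process, so $N_{-t}^{x,\cP_\infty}\sim\operatorname{Pois}(t)$. The paper makes this precise by conditioning on the filtration containing $\cP$ restricted to $[-T^x_{j-1},0]$ together with the jumps used so far; by independence of Poisson increments on disjoint time windows (and across columns) the residual gap $T^x_j - T^x_{j-1}$ is an unrevealed $\operatorname{Exp}(1)$, which is precisely the ``bookkeeping'' you flag as the main obstacle. So far so good.

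The proposal goes wrong in the split between Steps 2 and 3, and the way you propose to convert the annealed Poisson fact into the quenched statement. You assert that the upper tail ``needs no good event'' because $N_{-t}^{x,\cP}$ is dominated above by $\operatorname{Pois}(t)$. But that domination is a statement about the \emph{annealed} law (averaging over $\cP$); it is false $\cP$-by-$\cP$. For a fixed unfavorable $\cP_\infty$ --- say one in which all columns near $x$ are abnormally dense over $[-t,0]$ --- the walk collects rings at a rate well above $1$ no matter what $\cE^x$ is, and $\mathbb P_\cE(N_{-t}^{x,\cP}\ge t+r\sqrt t)$ can be close to $1$. Absorption at $\partial\Lambda_n$ is also not relevant here: the lemma is stated for $\cP_\infty=\cP_{\mathbb Z^2}$, where there is no killing. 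So both tails need control over $\cP$, and the asymmetric treatment of the two tails does not work. Relatedly, your Step~3 good event (bounded occupancy of all $\sim n^2\cdot 2t$ unit space-time boxes) can only hold with the claimed probability $1-Ce^{-r/C}$ for $r\gtrsim\log n$, since a union bound over polynomially many boxes is unavoidable; the lemma as stated has no such restriction on $r$.

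The paper avoids this entirely. It establishes the annealed per-site Chernoff bound $\mathbb P\big((\cP_\infty,\cE^x):\,N_{-t}^{x,\cP_\infty}\notin(t\pm r\sqrt t)\big)\le Ce^{-(r^2\wedge r\sqrt t)/C}$, union bounds over $x\in\Lambda_n$ to get an annealed $Cn^2 e^{-(r^2\wedge r\sqrt t)/C}$, and then applies Markov's inequality in the $\cP_\infty$ variable: for all but a $\sqrt C e^{-(r^2\wedge r\sqrt t)/(2C)}$ fraction of $\cP_\infty$, the conditional $\mathbb P_\cE$-probability is at most $\sqrt C n^2 e^{-(r^2\wedge r\sqrt t)/(2C)}$. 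This ``square-root the annealed bound'' step is the piece your plan is missing; it yields a quenched statement with no structural good event on $\cP$, no box-counting union bounds, and no hidden lower bound on $r$.
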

	
	\begin{proof}
		We will first bound the probability averaged over both the Poisson field and the random-walk jumps and then simply use Markov's inequality. Namely, we first establish that for every $x\in \Lambda_n$,
		\begin{align}\label{wts:number-of-jumps}
		\mathbb P\Big((\cP_\infty,\cE^x): N_{-t}^{x,\cP_\infty}\notin (t-r\sqrt{t}, t+ r\sqrt{t})\Big)\le C e^{ - (r^2 \wedge r\sqrt{t})/C}\,.
		\end{align}
		This will follow from the observation that the law of $\bS_{[-t,0]}^{x,\cP}$, when averaged over $\cP$, is simply that of a standard continuous-time random walk in $\Lambda_n$. Formally we will construct a coupling of the clock rings the random walk collects, and a Poisson process of intensity 1 on $[0,\infty)$  denoted $\mathcal T = (\mathcal T_1,\cT_2,...)$. 
		We claim that the law of the sequence $(T_1^x, T_2^x,...)$ of times of the Poisson points collected by the random walk $S^{x,\cP_\infty}$ is identical to the law of $(\cT_{1},\cT_2,...)$. To see this, let $\cF_t$ denote the filtration generated by $\cP_{\infty,t}$ together with the trajectory $S_{[-t,0]}^{x,\cP_\infty}$, i.e., the restriction of $\cP_\infty$ to times in $[-t,0]$ and the jumps $\cE^x$ that are used up to time $-t$. 
		Then observe that conditionally on $\cF_{T_1^x}$, $T_{j}^x - T_{j-1}^x$, is, by the memoryless property of exponential random variables (as well as the independence of the Poisson processes on the different columns $\{v\} \times (-\infty,0]$ and independence from $\cE^x$), simply distributed as an Exponential(1), which is the same as the distribution of $\cT_j -\cT_{j-1}$.  
		
		With that equality in distribution in hand, $N_{-t}^{x,\cP_\infty}$ is equal in distribution to the number of elements of $\cT$ that are smaller than $t$, i..e., the number of points of an intensity 1 Poisson process in an interval $[0,t]$ which is a $\mbox{Pois}(t)$ random variable.  Thus \eqref{wts:number-of-jumps} follows by the Chernoff bound, 
		\begin{align*}
		\mathbb P\big(\mbox{Pois}(t) \notin (t-r\sqrt{t}, t+r \sqrt{t})\big) \le \begin{cases}
		Ce^{ - r^2/C} & r\le \sqrt{t} \\ 
		Ce^{- r\sqrt{t}/C} & r>\sqrt{t}
		\end{cases}\,.
		\end{align*}
		By a union bound, 
		\begin{align*}
		\mathbb P\Big((\cP_\infty,(\cE^x)_x): \bigcup_{x\in \Lambda_n} \big\{N_{-t}^{x,\cP_\infty}\notin (t-r\sqrt{t}, t+ r\sqrt{t})\big\}\Big)\le C n^2 e^{ - (r^2 \wedge r\sqrt{t})/C}\,.
		\end{align*}Now using Markov's inequality, we find that 
		\begin{align*}
		\mathbb P\Big(\cP_\infty: \mathbb P\Big( \bigcup_{x\in \Lambda_n} N_{-t}^{x,\cP_\infty}\notin (t-r\sqrt{t}, & t+r\sqrt{t})\Big)  >\sqrt{C} n^2 e^{ - (r^2 \wedge r\sqrt{t})/2C} \Big) \\
		& \le \frac{C n^2 e^{- (r^2 \wedge r\sqrt{t})/C}}{\sqrt{C}n^2 e^{ - (r^2 \wedge r\sqrt{t})/2C}} \le \sqrt{C}e^{-(r^2 \wedge r\sqrt{t})/2C}\,.
		\end{align*}
		This implies the desired for some other choice of constant $C$. 
	\end{proof}

	The following lemma establishes that the random-walk through the Poisson noise field $\cP_n$ is a time-change of a discrete-time simple random walk, but where the time-change is trajectory dependent. The proof is immediate by taking the coupling of the internal clock of the random walk in Poisson noise, to an independent rate 1 Poisson clock, and then using the same sequence of jumps $\cE^x$ to generate the coupled simple random walk.  
	
	\begin{lemma}\label{lem:time-change-of-srw}
		Let $Y^x_k$ be the discrete-time simple random walk started from $x$ with jump sequence given by $\cE^x$. Let $\tau_0(k)= \inf \{t: N_{-t}^{x,\cP} = k\}$. Then for any $\cP$, we have 
		\begin{align*}
		(S_{-\tau_0(k)}^{x,\cP})_{k\ge 0}  = (Y_{k}^x)_{k\ge 0}\,.
		\end{align*}
		In particular, for every $k$, we have $\mathbb P_\cE (S_{-\tau_0(k)}^{x,\cP}\in \cdot) = \mathbb P(Y_k^x\in \cdot)$. 
	\end{lemma}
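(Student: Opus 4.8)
\noindent\emph{Proof strategy.} The content of this lemma is that in Definition~\ref{def:backwards-rw} the Poisson field $\cP$ enters only through the \emph{times} at which the backwards walk jumps: the jump \emph{directions} are always the i.i.d.\ uniform increments $\cE^x$, independently of $\cP$. So if one observes $\bS^{x,\cP}$ precisely at its jump times, the clock is erased and what remains is the discrete-time walk driven by $\cE^x$, i.e.\ $(Y^x_k)_k$. I would turn this into a proof in three short bookkeeping steps, working on the probability-one event that all clock rings of $\cP$ occur at distinct times and each column $\{v\}\times(-\infty,0]$ carries infinitely many rings.

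First I would fix notation matching Definition~\ref{def:backwards-rw}: write $-T^x_i$ for the time of the $i$-th Poisson point collected by $\bS^{x,\cP}$ (with $T^x_0=0$), so that $S^{x,\cP}_{-s}=Y^x_i$ for $s$ between $T^x_i$ and $T^x_{i+1}$, where $Y^x_0=x$ and $Y^x_i=Y^x_{i-1}+\cE^x_i$. If the trajectory is absorbed into $\partial\Lambda_n$ after collecting $k_0$ points, then it collects no further points, $T^x_k=+\infty$ for $k>k_0$, and $\bS^{x,\cP}$ is frozen at the absorption site — which is exactly what the killed walk $Y^x_k$ does for $k>k_0$ as well, by the definition of $(Y^x_k)$ following~\eqref{eq:Greens-function}.

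Second I would identify $\tau_0(k)$. Reading off the definition, $N^{x,\cP}_{-t}$ is the non-decreasing, right-continuous step function $t\mapsto \#\{j\ge1:\ T^x_j\le t\}$, jumping by $1$ at each $T^x_j$; hence $\{t:\ N^{x,\cP}_{-t}=k\}=[T^x_k,T^x_{k+1})$ and $\tau_0(k)=T^x_k$ (equal to $+\infty$ past absorption). Plugging this into the description of the trajectory gives $S^{x,\cP}_{-\tau_0(k)}=Y^x_k$, with the natural convention that at a collection time the walk has already performed its jump (this is just the choice of which endpoint of the half-open intervals in Definition~\ref{def:backwards-rw} is retained); in the absorbed case $S^{x,\cP}_{-\tau_0(k)}$ is read as $\lim_{t\to\infty}S^{x,\cP}_{-t}$, the absorption site, which again coincides with $Y^x_k$. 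Since this holds for every $k$ at once and pathwise in $\cE^x$, for each fixed $\cP$, it is exactly the asserted identity $(S^{x,\cP}_{-\tau_0(k)})_{k\ge0}=(Y^x_k)_{k\ge0}$.

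Finally, the ``in particular'' is automatic: the two sequences are literally equal (not merely equal in law) once $\cP$ is fixed, so they have the same distribution under the only remaining randomness, namely $\mathbb P_\cE$, which is precisely the law of the increment sequence $\cE^x$ defining $(Y^x_k)_k$. I do not expect any genuine obstacle here — the argument is entirely bookkeeping — and the only points demanding a little care are the endpoint convention at the jump times $-T^x_i$ and the treatment of trajectories absorbed at $\partial\Lambda_n$, where $\tau_0(k)$ becomes infinite; both are resolved by the fact that $\bS^{x,\cP}$ and $Y^x$ freeze consistently at the boundary.
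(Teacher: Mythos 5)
Your writeup matches the paper's argument: the paper gives this lemma a one-sentence proof (``use the same jump sequence $\cE^x$ for both walks and observe the Poisson field only sets the clock''), and you have expanded that sentence into the natural chain of bookkeeping identifications $\tau_0(k)=T^x_k$ and $S^{x,\cP}_{-T^x_k}=Y^x_k$, together with the correct handling of absorbed trajectories. The ``in~particular'' statement follows just as you say because the equality is pathwise in $\cE^x$ once $\cP$ is fixed.

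One small point worth flagging, since you explicitly raise it and resolve it in the wrong direction. You write that $S^{x,\cP}_{-\tau_0(k)}=Y^x_k$ holds ``with the natural convention that at a collection time the walk has already performed its jump,'' and call this a choice of endpoint for the half-open intervals in Definition~\ref{def:backwards-rw}. But Definition~\ref{def:backwards-rw} has already made the \emph{opposite} choice: item~(4) sets $S^{x,\cP}_{-s}=Y_i$ for $-s\in[-T^x_{i+1},-T^x_i)$, so for $k\ge 1$ one has $S^{x,\cP}_{-T^x_k}=Y^x_{k-1}$, not $Y^x_k$. In other words, at the time of the $k$th collected clock ring the BTRW, read as defined, has not yet jumped. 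So the lemma as written actually carries a harmless off-by-one relative to its own Definition~\ref{def:backwards-rw}; the paper's one-line proof does not notice this either, and it is inconsequential in every place the lemma is applied (where $k$ is of order $n^2\log n$ and a single step is negligible), but the honest statement is that one step is being silently absorbed, rather than that the endpoint convention already gives the clean identity. If you want the identity to hold on the nose you would either evaluate at $S^{x,\cP}_{(-\tau_0(k))^-}$ (the left limit in $-s$) or flip the half-open interval in Definition~\ref{def:backwards-rw}.
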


	\subsection{Survival probability of random walk in $\cP$}
	In this subsection, we use the bound on the number of steps taken by the random walk in the Poisson noise field from Lemma~\ref{lem:number-of-clock-rings} to prove the following sharp bound on the absorption probability of $S_{-t}^{x,\cP}$. Recall the top eigenvalue $\lambda_\one$ and corresponding eigenvector $\varphi_\one$ of the Laplacian on $\Lambda_n$ from~\eqref{eq:top-eigenvalue}--\eqref{eq:top-eigenvector}, and recall their orders of magnitude from Fact~\ref{fact:order-of-magnitudes}.

	\begin{proposition}\label{prop:mean-of-h}
		Fix any $t= \Theta(n^2 \log n)$. With probability $1-o(n^{-4})$, $\cP_\infty$ is such that for any $x\in \Lambda_n$,  
		\begin{align*}
		\mathbb P_\cE \big(S_{-t}^{x,\cP}\notin \partial \Lambda_n\big)  = (\widehat \varphi_\one(x) +o(1)) e^{ - \lambda_\one t}\,,
		\end{align*}
		recalling that  $\widehat \varphi_\one(x) = \varphi_\one(x) \sum_{y\in \Lambda_n} \varphi_\one(y)$. 
	\end{proposition}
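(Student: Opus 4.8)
The strategy is to reduce the quenched survival probability of the BTRW $S_{-t}^{x,\cP}$ to the survival probability of an ordinary discrete-time simple random walk run for a \emph{deterministic} number of steps, for which Lemma~\ref{lem:survival-probability-srw} already gives a sharp answer. The two ingredients making this possible are the time-change identity of Lemma~\ref{lem:time-change-of-srw}, which realizes $S_{-t}^{x,\cP}$ as $Y^x_{N_{-t}^{x,\cP}}$ with $(Y^x_k)_k$ the discrete-time walk driven by the same jump sequence $\cE^x$, and the quenched clock-count concentration of Lemma~\ref{lem:number-of-clock-rings}, which pins $N_{-t}^{x,\cP}$ to within $\pm O(\sqrt t)$ of $t$.

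Concretely, first fix a window parameter, say $r = (\log n)^2$, and restrict to the event of Lemma~\ref{lem:number-of-clock-rings}; this event has $\cP_\infty$-probability $1 - Ce^{-(\log n)^2/C} = 1 - o(n^{-4})$, and on it, for every $x\in\Lambda_n$,
\begin{align*}
\mathbb P_\cE\big(N_{-t}^{x,\cP}\notin (k_-,k_+)\big)\ \le\ Cn^2 e^{-(\log n)^4/C}, \qquad k_\pm := t \pm r\sqrt t,
\end{align*}
where we used $t=\Theta(n^2\log n)$ and $r$ polylogarithmic, so that $r^2\wedge r\sqrt t = r^2$. Next, couple via Lemma~\ref{lem:time-change-of-srw} so that $S_{-t}^{x,\cP}=Y^x_{N_{-t}^{x,\cP}}$. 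Since $\partial\Lambda_n$ is absorbing, $k\mapsto \mathbf 1\{Y^x_k\notin\partial\Lambda_n\}$ is non-increasing; hence on $\{k_-\le N_{-t}^{x,\cP}\le k_+\}$ one has, pointwise in $\cE^x$,
\begin{align*}
\mathbf 1\{Y^x_{k_+}\notin\partial\Lambda_n\}\ \le\ \mathbf 1\{S_{-t}^{x,\cP}\notin\partial\Lambda_n\}\ \le\ \mathbf 1\{Y^x_{k_-}\notin\partial\Lambda_n\}.
\end{align*}
Taking $\mathbb E_\cE$ and estimating the contribution of $\{N_{-t}^{x,\cP}\notin(k_-,k_+)\}$ by the display above, this yields
\begin{align*}
\mathbb P_\cE\big(Y^x_{k_+}\notin\partial\Lambda_n\big) - Cn^2 e^{-(\log n)^4/C}\ \le\ \mathbb P_\cE\big(S_{-t}^{x,\cP}\notin\partial\Lambda_n\big)\ \le\ \mathbb P_\cE\big(Y^x_{k_-}\notin\partial\Lambda_n\big) + Cn^2 e^{-(\log n)^4/C}.
\end{align*}

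It then remains to evaluate the two outer probabilities. Since $k_\pm = \Theta(n^2\log n) = \omega(n^2)$, Lemma~\ref{lem:survival-probability-srw} gives $\mathbb P_\cE(Y^x_{k_\pm}\notin\partial\Lambda_n) = \widehat\varphi_\one(x)(1-\lambda_\one)^{k_\pm} + o(e^{-\lambda_\one k_\pm})$ uniformly in $x$, and using $\lambda_\one = \tfrac{\pi^2}{2n^2}+O(n^{-4})$ (Fact~\ref{fact:order-of-magnitudes}) one checks
\begin{align*}
(1-\lambda_\one)^{k_\pm} = e^{-\lambda_\one k_\pm}\big(1 + O(\lambda_\one^2 k_\pm)\big) = e^{-\lambda_\one t}\,e^{\mp\lambda_\one r\sqrt t}\,(1+o(1)) = e^{-\lambda_\one t}(1+o(1)),
\end{align*}
since $\lambda_\one^2 k_\pm = \Theta(n^{-2}\log n) = o(1)$ and $\lambda_\one r\sqrt t = \Theta(n^{-1}(\log n)^{5/2}) = o(1)$. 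Combined with $\widehat\varphi_\one = O(1)$ and the fact that $e^{-\lambda_\one t} = n^{-\Theta(1)}$ dominates the error $Cn^2 e^{-(\log n)^4/C}$, both outer quantities equal $(\widehat\varphi_\one(x)+o(1))e^{-\lambda_\one t}$ uniformly in $x\in\Lambda_n$, which is the claim.

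The step that requires the most care is the very first reduction, because for a fixed $\cP$ the jump count $N_{-t}^{x,\cP}$ and the trajectory of the BTRW through $\cP$ are genuinely dependent—conditioning on an atypical number of jumps biases the path—which is precisely the obstruction flagged in Remark~\ref{rem:tv-Gaussians}. The argument sidesteps this without disentangling the dependence because absorption in $\partial\Lambda_n$ is \emph{monotone} in the number of steps, so that only the marginal tail of $N_{-t}^{x,\cP}$, and not the conditional law of the path given it, enters; and because the $\sqrt t$-scale fluctuations of $N_{-t}^{x,\cP}$ are small enough that $\lambda_\one r\sqrt t = o(1)$, so the $\pm O(\sqrt t)$ ambiguity in the step count is invisible at resolution $e^{-\lambda_\one t}$. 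It is exactly this balance that dictates the regime $t=\Theta(n^2\log n)$ and forces the clock-count window in Lemma~\ref{lem:number-of-clock-rings} to be at least of order $\sqrt t = \Theta(n\sqrt{\log n})$.
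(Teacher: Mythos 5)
Your proof is correct and follows essentially the same route as the paper's: restrict to the good $\cP_\infty$ set from Lemma~\ref{lem:number-of-clock-rings} with $r=(\log n)^2$, invoke the time-change identity of Lemma~\ref{lem:time-change-of-srw}, exploit the monotonicity of absorption in the step count to sandwich by the discrete-time walk at $t\mp r\sqrt t$ steps, and then apply Lemma~\ref{lem:survival-probability-srw} together with the estimates $\lambda_\one^2 k_\pm = o(1)$ and $\lambda_\one r\sqrt t = o(1)$. The paper phrases the sandwiching via the hitting times $\tau_0(t\pm r\sqrt t)$ rather than writing $S_{-t}^{x,\cP}=Y^x_{N_{-t}^{x,\cP}}$ directly, but this is a purely notational difference.
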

	
	\begin{proof}
		For fixed $\cP_\infty$, define the event 
		\begin{align*}
		E_{t,x,r}^{\cP_\infty} = \big\{ N_{-t}^{x,\cP_\infty} \in (t-r\sqrt{t}, t+r\sqrt{t})\big\}\,,
		\end{align*}
		and let 
		\begin{align*}
		\Gamma = \Big\{\cP_\infty:  \mathbb P_\cE \big( \bigcup_{x\in \Lambda_n} (E_{t,x,r}^{\cP_\infty})^c\big)\le Cn^2 e^{ - r/C} \Big\}\,.
		\end{align*}
		Let $r= (\log n)^2$, and notice that by Lemma~\ref{lem:number-of-clock-rings}, $\Gamma$ has probability $1-o(n^{-4})$. Now fix any $\cP_\infty\in \Gamma$, and notice that since $\partial \Lambda_n$ is absorbing for the random walk in $\cP$, 
		\begin{align*}
		\mathbb P_\cE \big( S_{-t}^{x,\cP}\notin \partial \Lambda_n\big)  \le \mathbb P_\cE \big(\bS_{[-t,0]}^{x,\cP_\infty} \cap  \partial \Lambda_n  = \emptyset, E_{t,x,r}^{\cP_\infty}\big) + \mathbb P_\cE ((E_{t,x,r}^{\cP_\infty})^c)\,.
		\end{align*}
		The second term on the right-hand side is at most $Cn^2 e^{- r/C}$ by definition of $\Gamma$. Now consider the first term on the right-hand side and observe that the event $\{\bS_{[-t,0]}^{x,\cP_\infty}\cap \partial \Lambda_n = \emptyset\}$ is decreasing in $t$. Thus, we can maximize its probability by replacing $t$ with the random time $$\tau_0 (t-r\sqrt{t})= \inf \{s: N_{-s}^{x,\cP_\infty} = t-r \sqrt{t}\}$$
		(as on the event $E_{x,r,t}^{\cP_\infty}$ we necessarily have $\tau_{0}(t-r\sqrt{t})<t$). In particular, 
		\begin{align*}
		\mathbb P_\cE \big(\bS_{[-t,0]}^{x,\cP_\infty}\cap \partial \Lambda_n = \emptyset, E_{t,x,r}^{\cP_\infty}\big) & \le \mathbb P_\cE \big(\bS_{[-\tau_0(t-r\sqrt{t}),0]}^{x,\cP_\infty} \cap \partial \Lambda_n= \emptyset\big) \\ 
		& =  \mathbb P_\cE \big( S_{-\tau_0(t-r\sqrt{t})}^{x,\cP} \notin \partial \Lambda_n\big)\,.
		\end{align*}
		Recall from Lemma~\ref{lem:time-change-of-srw} that the distribution of $S_{-\tau_0(t-r\sqrt{t})}^{x,\cP}$ is exactly that of $t-r\sqrt{t}$ steps of a simple (discrete-time) random walk $(Y_k^x)_{k\ge 0}$ absorbed at $\partial \Lambda_n$. Then, by Lemma~\ref{lem:survival-probability-srw}, if we let $r= (\log n)^2$ and $t= O(n^2\log n)$ so that $n^2 = o(t - r\sqrt{t})$, we have 
		\begin{align*}
		\mathbb P_\cE \big(S_{-\tau_{0}(t-r\sqrt{t})}^{x,\cP}\notin \partial \Lambda_n\big)  = \mathbb P (Y_{t-r\sqrt{t}}^x\notin \partial \Lambda_n) = (\widehat \varphi_\one (x) +o(1)) e^{ - \lambda_\one (t- r\sqrt{t})}\,.
		\end{align*}
		At this point using the fact that $\lambda_\one = \Theta(n^{-2})$ and $\widehat \varphi_\one(x) = O(1)$, the above yields the desired
		\begin{align*}
		\mathbb P_\cE \big( S_{-t}^{x,\cP}\notin \partial \Lambda_n\big) & \le (\widehat \varphi_\one(x)+o(1)) e^{ - \lambda_\one (t-r\sqrt{t})} + Cn^2 e^{ - r/C} \\
		& \le (\widehat \varphi_\one(x)+o(1)) e^{ - \lambda_\one t}\,.
		\end{align*}
		
		Turning to the matching lower bound, we have 
		\begin{align*}
		\mathbb P_\cE \big( S_{-t}^{x,\cP}\notin \partial \Lambda_n\big) & \ge \mathbb P_\cE \big(\bS_{[-t,0]}^{x,\cP_\infty} \cap  \partial \Lambda_n  = \emptyset, E_{t,x,r}^{\cP_\infty}\big) \\
		& \ge \mathbb P_\cE \big(S_{-\tau_0(t+r\sqrt{t})}^{x,\cP}\notin \partial\Lambda_n = \emptyset, E_{t,x,r}^{\cP_\infty}\big)\,,
		\end{align*}
		using the fact that on $E_{t,x,r}^{\cP_\infty}$,  we have $\tau_0(t+r\sqrt{t}) > t$ (and the event $S_{[-t,0]}^{x,\cP_\infty}\cap \partial \Lambda_n = \emptyset$ is decreasing in $t$). Fixing any $\cP_\infty\in \Gamma$, this is in turn at least 
		\begin{align*}
		\mathbb P_\cE \big(S_{-\tau_0(t+r\sqrt{t})}^{x,\cP}\notin \partial\Lambda_n = \emptyset\big)- \mathbb P_\cE ((E_{t,x,r}^{\cP_\infty})^c) \ge \mathbb P (Y_{t+r\sqrt{t}}^x \notin \partial \Lambda_n) - Cn^2 e^{ - r/C}\,.
		\end{align*}
		By Lemma~\ref{lem:survival-probability-srw},  and the choices $r = (\log n)^2$ and $ t= O(n^2 \log n)$, this is at least 
		\begin{align*}
		(\widehat \varphi_\one(x) -o(1)) e^{ - \lambda_\one (t+r\sqrt{t})} - Cn^2 e^{ - r/C}\,,
		\end{align*}
		which,
		using the choices of $r,t$ and the fact that $\lambda_\one = \Theta(n^{-2})$ is $(\widehat \varphi_\one(x)-o(1)) e^{ - \lambda_\one t}$ as desired. 
	\end{proof}
	
	\subsection{Annealed survival probability estimates}
	We conclude this section by using the above quenched estimates to also obtain an annealed estimate on the mean decay for the DGFF Glauber dynamics for arbitrary initialization. 
	
	\begin{lemma}\label{lem:annealed-expectation-against-random-walk}
	For any non-negative function $f:\Lambda_n \to\mathbb R_+$, for every $t\ge n^2$ we have
	\begin{align*}
	    \mathbb E_{\cP}\Big[\sum_{x\in \Lambda_n} \mathbb E_{\cE}\big[f(S_{-t}^{x,\cP})]\Big] \le Ce^{ - \lambda_\one (t-n^2)} \sum_{x\in \Lambda_n} f(x)\,.
	\end{align*}
	where $\mathbb E_{\cP}$ denotes expectation with respect to the Poisson update sequence. 
	\end{lemma}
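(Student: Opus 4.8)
The plan is to average over the Poisson field $\cP$ first, which collapses the BTRW into an ordinary continuous-time random walk, and then to bound the resulting survival probability uniformly for all $t\ge n^2$ by a one-line spectral estimate combined with a standard heat-kernel bound. Throughout, $f$ is understood to be extended by $0$ to $\partial\Lambda_n$, which is consistent with $\partial\Lambda_n$ being absorbing for the BTRW.

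\emph{Step 1: averaging over $\cP$.} Fix $x\in\Lambda_n$. By Lemma~\ref{lem:time-change-of-srw}, for fixed $\cP$ the walk $S_{-t}^{x,\cP}$ is the discrete-time simple random walk $(Y_k^x)_k$ (with jump sequence $\cE^x$) evaluated at $k=N_{-t}^{x,\cP}$; and by the coupling constructed in the proof of Lemma~\ref{lem:number-of-clock-rings}, $N_{-t}^{x,\cP}$ is a $\mathrm{Pois}(t)$ random variable independent of $\cE^x$. Hence, under $\mathbb P_\cP\otimes\mathbb P_\cE$, the variable $S_{-t}^{x,\cP}$ has exactly the law of $\widetilde Y_t^x$, the rate-$1$ continuous-time simple random walk on $\Lambda_n$ killed at $\partial\Lambda_n$. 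Writing $p_t(\cdot,\cdot)$ for the associated killed transition kernel,
\[
\mathbb E_\cP\big[\mathbb E_\cE[f(S_{-t}^{x,\cP})]\big] = \mathbb E\big[f(\widetilde Y_t^x)\big] = \sum_{y\in\Lambda_n} p_t(x,y)\,f(y).
\]

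\emph{Step 2: symmetrizing.} Summing over $x\in\Lambda_n$ and using that the matrix $\Delta_n$ in~\eqref{eq:Laplacian}, hence $p_t=e^{t\Delta_n}$, is symmetric,
\[
\mathbb E_\cP\Big[\sum_{x\in\Lambda_n}\mathbb E_\cE[f(S_{-t}^{x,\cP})]\Big] = \sum_{y\in\Lambda_n} f(y)\sum_{x\in\Lambda_n} p_t(x,y) = \sum_{y\in\Lambda_n} f(y)\,q_t(y), \qquad q_t(y):=\mathbb P\big(\widetilde Y_t^y\notin\partial\Lambda_n\big).
\]
Since $f\ge 0$, it now suffices to show the pointwise survival bound $q_t(y)\le C e^{-\lambda_\one(t-n^2)}$ for every $y\in\Lambda_n$ and every $t\ge n^2$. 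Let $P_s=e^{s\Delta_n}$ be the killed heat semigroup (sub-Markovian, and self-adjoint with respect to the counting measure), so $q_s=P_s\one$ with $\one$ the all-ones vector on $\Lambda_n$. As $t-n^2\ge 0$ we may write $q_t=P_{n^2}q_{t-n^2}$, whence for any $y$,
\[
q_t(y) = \sum_{z\in\Lambda_n} p_{n^2}(y,z)\,q_{t-n^2}(z) \le \Big(\max_{z\in\Lambda_n} p_{n^2}(y,z)\Big)\sum_{z\in\Lambda_n} q_{t-n^2}(z).
\]
The killed kernel is dominated by the free continuous-time random-walk heat kernel on $\mathbb Z^2$, which at time $n^2$ is $O(n^{-2})$ uniformly (e.g.~\cite{Lawler-RW-book}), so $\max_z p_{n^2}(y,z)=O(n^{-2})$. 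Expanding $\one$ in an orthonormal eigenbasis of $-\Delta_n$ gives $\sum_z q_{t-n^2}(z)=\langle P_{t-n^2}\one,\one\rangle\le e^{-\lambda_\one(t-n^2)}\|\one\|_2^2 = e^{-\lambda_\one(t-n^2)}|\Lambda_n|$. Since $|\Lambda_n|=(n-1)^2=O(n^2)$, the two estimates combine to give $q_t(y)\le Ce^{-\lambda_\one(t-n^2)}$, and substituting into the display above finishes the proof.

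\emph{Main obstacle.} There is no serious difficulty: the measure-theoretic identification in Step~1 of the $\cP$-averaged BTRW with a continuous-time killed random walk is essentially already contained in Lemmas~\ref{lem:number-of-clock-rings}--\ref{lem:time-change-of-srw}, and the decay estimate is a one-line Rayleigh-quotient bound in $L^2$ after a single diffusive ``burn-in'' step of length $n^2$ absorbing the $|\Lambda_n|$ factor. The only external input is the standard uniform $O(n^{-2})$ bound on the $\mathbb Z^2$ heat kernel at the diffusive time $n^2$ (together with the elementary domination of the killed kernel by the free one), which can be quoted directly.
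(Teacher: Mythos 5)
Your proof is correct, and it follows essentially the same strategy as the paper's: average out $\cP$ to collapse the BTRW into a continuous-time killed walk, then decompose the time into a diffusive ``burn-in'' of length $n^2$ (supplying the $O(n^{-2})$ factor via the free $\mathbb Z^2$ heat kernel) plus a decay phase controlled by $\lambda_\one$. The one place you diverge is the final spectral step: the paper bounds the individual transition probability $\mathbb P(\widetilde Y_t^x=y)$ by a Markov decomposition $\mathbb P(\widetilde Y_{t-n^2}^x\notin\partial\Lambda_n)\cdot\max_z \mathbb P(\widetilde Y_{n^2}^z=y)$ and invokes the sharp survival asymptotics of Lemma~\ref{lem:survival-probability-srw}, whereas you symmetrize $\sum_x p_t(x,y)=q_t(y)$ and control the total surviving mass $\sum_z q_{t-n^2}(z)=\langle P_{t-n^2}\one,\one\rangle$ by a crude Rayleigh-quotient bound. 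That variant is slightly more self-contained and, if anything, a bit more robust: it produces a bound that is uniform in the regime $t\ge n^2$ without needing $n^2=o(t)$ (the hypothesis of Lemma~\ref{lem:survival-probability-srw}, which the paper applies to the time $t-n^2$ even when that may be $O(n^2)$). The paper's version is essentially equivalent once one absorbs the bounded factor $e^{-\lambda_\one n^2}=\Theta(1)$ into the constant, so this is a minor cleanup rather than a substantive difference.
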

	
	\begin{proof}
	    By linearity of expectation, the left-hand side is 
	    \begin{align*}
	       \mathbb E_{\cP}\Big[\sum_{x\in \Lambda_n} \mathbb E_{\cE}\big[f(S_{-t}^{x,\cP})]\Big] = \sum_{x\in \Lambda_n} \sum_{y\in \Lambda_n} f(y) \mathbb E_{\cP} \mathbb P_{\cE}(S_{-t}^{x,\cP}= y)\,.
	    \end{align*}
	    The expectation over $\cP$ of the probability $\mathbb P_{\cE}(S_{-t}^{x,\cP} =  y)$ is, by the reasoning presented in Lemma~\ref{lem:number-of-clock-rings}, exactly the probability for a standard continuous-time random walk $(\widetilde Y_t^x)_{t\ge 0}$ with rate-one Poisson jump times and absorbed at $\partial \Lambda_n$. It therefore suffices to show that 
	    \begin{align}\label{eq:nts-one-point-hitting-probability}
	        \mathbb P_{\cE}(\widetilde Y_t^x =y) \le \frac{C}{n^2}e^{ - \lambda_\one t}\,.
	    \end{align}
	    To see this, observe that by the Markov property, we can bound the left-hand side as 
	    \begin{align*}
	        \mathbb P_{\cE}(\widetilde Y_{t-n^2}^x \notin \partial \Lambda_n ) \max_{z\in \Lambda_n} \mathbb P_{\cE}(\widetilde Y_{n^2}^z = y)\,.
	    \end{align*}
	    The first term here is $\widehat \varphi_\one (x) (1+o(1)) e^{ - \lambda_\one (t-n^2)}$ per Lemma~\ref{lem:survival-probability-srw}. The second term is upper bounded by, say, $\mathbb P_{\cE}(\bar Y_{n^2}^z = y)$ where $(\bar Y_t^z)$ is not killed at $\partial \Lambda_n$; by a local central limit theorem, this probability is at most $\frac{C}{n^2}$ for some universal constant $C$ as claimed. 
	\end{proof}

	\section{The volume supermartingale}\label{sec:volume-supermartingale}
	The bounds of Proposition~\ref{prop:mean-of-h} and Lemma~\ref{lem:annealed-expectation-against-random-walk} imply an exponential decay with rate $\Theta(n^{-2})$ for the mean of the DGFF dynamics per Proposition~\ref{prop:mean-and-variance-of-h}. This would already be essentially sufficient to deduce an $O(n^2 \log n)$ upper bound on the mixing time. Indeed this follows from what is essentially a union bound on the probability of not coupling in one sweep (a time period in which every site gets updated at least once) after the mean $\mathbb E[\bh(t)]$ is $o(n^{-2})$ everywhere. However, this would not attain the correct mixing time of $t_\star+o(n^2 \log n)$, which corresponds to the time when this mean process is simply $o(1)$ everywhere. In order to show the right mixing time upper bound, we switch to a  martingale  based argument building on that appearing  in~\cite{CLL-mixing-simplex,CLL-nabla-phi}.
	
The argument aims to couple two DGFF chains, one initialized from the maximal, all-$n$, configuration, with one initialized from some $\bg$ for some arbitrary $\bg$ having $\|\bg\|_\infty \le n$. Since in this section, we will be tracking the evolution of these chains simultaneously, and aiming to couple them to one another, we emphasize the initialization dependence in our notation, using $\bh^\bn$ to indicate the former chain, and $\bh^\bg$ to indicate the latter. 
	The fundamental quantity in the argument is the \emph{volume {supermartingale}} given by
	\begin{align}\label{eq:V-t}
	    V_t = \sum_{x\in \Lambda_n} dh_x(t)\,, \qquad \mbox{where} \qquad  dh_x(t) = h^{\vec{n}}(x,t) - h^{\bg}(x,t)\,.
	\end{align}
    The terminology of calling this process a \emph{supermartingale} is justified by Claim~\ref{clm:V-t-supermartingale}. 
    
    For each $\bg$, we construct a coupling of $h^{\bn}(t)$ and $h^\bg(t)$ such that $dh_x(t) \ge 0$ holds for all $x\in \Lambda_n$ and all $t\ge 0$, and such that at some $t = t_\star + O(n^2 \log \log n)$, we have
    \begin{align*}
        \mathbb P(V_t \ne 0) = o(1)\,.
    \end{align*}
    This will straightforwardly imply that the mixing time maximized over all initializatons $\|\bg\|_\infty \le n$ is at most $t_\star + O(n^2 \log \log n)$ as claimed in Theorem~\ref{thm:main-revised}. 
    
    Our approach to proving this is to stitch together two different couplings of the dynamics that preserve the monotonicity of the dynamics. The first of these couplings is the identity coupling defined by Remark~\ref{rem:identity-coupling}. This will be used for a time of $t_\star + C_1 n^2 \log \log n$ to get the volume supermartingale down to a size of $o(n^2(\log n)^{-5})$, corresponding to an average discrepancy of $o((\log n)^{-5})$ per site.
    
    \begin{lemma}\label{lem:stage-1}
    Under the identity coupling, 
    \begin{align*}
        \mathbb P\big( dh_x(t_\star + sn^2) \ge Ce^{-\pi^2 s/2} \mbox{ for some $x\in \Lambda_n$}\big) = o(1)\,.
    \end{align*}
    In particular, we have 
    \begin{align*}
        \mathbb P\big(V_{t_\star + sn^2}\le n^2 e^{ - \pi^2 s/2}) = 1-o(1)\,.
    \end{align*}
    \end{lemma}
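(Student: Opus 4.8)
The plan is to exploit the fact that under the identity coupling the discrepancy process carries no Gaussian noise: when a site is updated, $h^{\bn}$ and $h^{\bg}$ there are both set to the average of their neighbours plus the \emph{same} Gaussian, so $dh_x$ at that site becomes simply the average of the neighbouring discrepancies. Hence $(dh_x(t))_x$ is a deterministic function of the update clocks, the BTRW representation expresses it exactly, and its size is then controlled by the quenched survival estimate of Proposition~\ref{prop:mean-of-h}.

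Concretely, I would first pass to the backward picture. By the coupling in the proof of Theorem~\ref{thm:rw-representation-exact-equality}, running both chains with a common Poisson update sequence and common update Gaussians corresponds to running the representations~\eqref{eq:h-P} with a common noise field $\cP$ and common variables $(Z_p)_{p\in\cP}$; applying Theorem~\ref{thm:rw-representation-exact-equality} to both chains with the shared noise and averaging over $\cP$ shows the law of $(dh_x(t))_x$ agrees in the two pictures. For a fixed $\cP$, Proposition~\ref{prop:mean-and-variance-of-h} writes each chain as $\bh^{\cP}(t) = \bm_t^{\cP} + \bA_t^{\cP}\vec Z$, with $\bA_t^{\cP}$ depending on $\cP$ only; subtracting, the $\vec Z$-terms cancel and
\[
dh_x(t) = \mathbb E_{\cE^x}\!\big[ g_0\big(S_{-t}^{x,\cP}\big)\big], \qquad g_0 := \bn - \bg\ \text{ (the all-$n$ function minus $\bg$)},
\]
with $g_0$ extended by $0$ on $\partial\Lambda_n$; since $\|\bg\|_\infty\le n$ we have $0\le g_0\le 2n$ (which also re-proves $dh_x(t)\ge 0$), so $dh_x(t)\le 2n\,\mathbb P_{\cE^x}(S_{-t}^{x,\cP}\notin\partial\Lambda_n)$, uniformly over such $\bg$.

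Next I would insert Proposition~\ref{prop:mean-of-h} with $t = t_\star + sn^2$, which is $\Theta(n^2\log n)$ for $s$ in the relevant range: outside a $\cP_\infty$-event of probability $o(n^{-4})$ one has $\mathbb P_{\cE^x}(S_{-t}^{x,\cP}\notin\partial\Lambda_n) = (\widehat\varphi_\one(x)+o(1))e^{-\lambda_\one t}$ simultaneously for all $x\in\Lambda_n$. Using $\lambda_\one = \tfrac{\pi^2}{2n^2}+O(n^{-4})$ and $t_\star = \tfrac{2}{\pi^2}n^2\log n$ (Fact~\ref{fact:order-of-magnitudes} and~\eqref{eq:t-star}), one computes $\lambda_\one t = \log n + \tfrac{\pi^2 s}{2}+O(n^{-2}\log n)$, hence $e^{-\lambda_\one t} = \tfrac1n e^{-\pi^2 s/2}(1+o(1))$. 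Together with $\widehat\varphi_\one(x)=O(1)$ (Fact~\ref{fact:order-of-magnitudes}) this gives, on the good event, $dh_x(t)\le (2\widehat\varphi_\one(x)+o(1))e^{-\pi^2 s/2} \le C e^{-\pi^2 s/2}$ for all $x$ at once, and since the bad event has probability $o(1)$ the first assertion follows. Summing the bound $dh_x(t)\le 2n\,e^{-\lambda_\one t}(\widehat\varphi_\one(x)+o(1))$ over $x$ and using $\sum_{x}\widehat\varphi_\one(x) = \big(\sum_{y}\varphi_\one(y)\big)^2 = \Theta(n^2)$ (from~\eqref{eq:top-eigenvector}) yields $V_t = O(n^2 e^{-\pi^2 s/2})$ on the same event, which is the ``in particular''.

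I do not expect a serious obstacle: the statement is essentially a corollary of Proposition~\ref{prop:mean-of-h}, where the real work was done. The two points to handle carefully are (i) verifying that the forward identity coupling of Remark~\ref{rem:identity-coupling} matches, under the coupling of Theorem~\ref{thm:rw-representation-exact-equality}, a common $(\cP,(Z_p))$ in the backward representation — which legitimises both the cancellation of $\vec Z$ and computing the relevant probability in the backward picture — and (ii) the uniformity of all estimates in $\bg$ with $\|\bg\|_\infty\le n$, which is automatic since only $\|g_0\|_\infty\le 2n$ enters and the $\cP_\infty$-good event of Proposition~\ref{prop:mean-of-h} makes no reference to $\bg$.
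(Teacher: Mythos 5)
Your proof is correct and follows essentially the same route as the paper: the identity coupling makes the discrepancy $dh_x(t)$ deterministic given $\cP$ (the $\vec Z$-term cancels, leaving a difference of means), and then Proposition~\ref{prop:mean-of-h} gives the quenched survival-probability bound that translates directly into the stated decay. The paper is slightly terser on the $\lambda_\one t$ arithmetic and leaves the ``in particular'' implicit, but these are the same steps you spelled out.
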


    However, given the continuous nature of the state space, the identity coupling will never actually result in coalescence of the two Markov chain realizations. Moreover, even getting $V_t$ to $o(1)$ scales so that the \emph{total} discrepancy is truly negligible, takes a further $\Theta(n^2 \log n)$ time under that coupling, destroying any chance of proving cutoff. To address this, we use a second coupling, called the sticky coupling, that will be used to make the two chains coalesce in only a further $O(n^2 \log \log n)$ time. 
    
    \begin{definition}\label{def:two-stage-coupling}
    For two chains ${\bh}^\bn$ and ${\bh}^\bg$ with $\|\bg\|_\infty \le n$, the \emph{two-stage coupling} of $(\bh^\bn(t),\bh^\bg(t))_{t\ge 0}$ is the following.
    \begin{enumerate}
        \item For $T_0 := t_\star + \frac{12}{\pi^2}n^2 \log \log n$, couple $(\bh^\bn(t),\bh^\bg(t))_{t\in [0,T_0]}$ using the identity coupling. 
        \item Then, for all $t\ge T_0$, evolve $(\bh^\bn(t),\bh^\bg(t))$ using the sticky coupling which we will define shortly in Definition~\ref{def:DGFF-sticky-coupling}. 
    \end{enumerate}
    \end{definition}
    
    With the two-stage coupling defined, the main result of this section is the following. 
    
    \begin{proposition}\label{prop:V-t-down-to-polylog-scales}
        There exists $C$ such that the following holds. For every $\bg$ such that $\|\bg\|_\infty \le n$, under the two-stage coupling, 
        \begin{align*}
            \mathbb P\big( V_{t_\star + Cn^2 \log \log n} \le (\log n)^{5}\big) = 1-o(1)\,.
        \end{align*}
    \end{proposition}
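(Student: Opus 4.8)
The plan is to combine the two phases of Definition~\ref{def:two-stage-coupling}: first use Lemma~\ref{lem:stage-1} (the identity-coupling bound) to guarantee that after time $T_0 = t_\star + \frac{12}{\pi^2}n^2\log\log n$ the volume supermartingale is already quite small, and then run the sticky coupling for a further $O(n^2\log\log n)$ time to bring $V_t$ down to $(\log n)^5$. Concretely, applying Lemma~\ref{lem:stage-1} with the choice $s = \frac{12}{\pi^2}\log\log n$ gives $e^{-\pi^2 s/2} = (\log n)^{-6}$, so with probability $1-o(1)$ we have $V_{T_0}\le n^2 (\log n)^{-6}$ and moreover (using the first, pointwise, conclusion of Lemma~\ref{lem:stage-1}) $\max_x dh_x(T_0) \le C(\log n)^{-6}$. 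Since the two-stage coupling is monotone, $V_t$ is nonincreasing in the stochastic-domination sense only after we verify it is a supermartingale — which is Claim~\ref{clm:V-t-supermartingale} — but in fact for the identity coupling $\mathbb E[V_t]$ decays at rate $\lambda_\one$, and this is what Lemma~\ref{lem:stage-1} already encodes. So the first phase is essentially a black box.

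For the second phase, I would start the sticky coupling at time $T_0$ from a configuration with $V_{T_0}\le n^2(\log n)^{-6}$. The point is that we only need to go from $n^2 (\log n)^{-6}$ down to $(\log n)^5$, i.e. to shrink $V_t$ by a further multiplicative factor of roughly $n^2(\log n)^{-11}$, and the claim is that this takes only $O(n^2\log\log n)$ additional time. The natural mechanism is a multi-scale argument: partition the relevant range of values of $V_t$ into dyadic (or $(\log n)$-adic) scales, and at each scale bound the hitting time of the next lower scale. The key input is a lower bound on the rate of decay of the supermartingale $V_t$ (via its angle-bracket / quadratic-variation process, as foreshadowed in Section~\ref{subsec:proof-ideas}): because $V_t$ is a supermartingale with an angle bracket whose growth rate is comparable to the number of sites with nonzero discrepancy, and because once $V_t \le n^2(\log n)^{-6}$ the discrepancies are small enough that the $\ell^2$-to-$\ell^1$ comparison of Lemma~\ref{lem:two-norm-to-one-norm} is effective, the supermartingale drift dominates. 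One would show that starting from $V_{T_0}\le V$, the time to halve $V_t$ (or to bring it below $V/\log n$) is $O(n^2 \cdot V^{-1}\cdot \text{something})$; summing a geometric series over $O(\log\log n)$ scales (from $n^2(\log n)^{-6}$ down to $(\log n)^5$) contributes the claimed $O(n^2\log\log n)$. Here one also uses an $\ell^\infty$ control on $\max_x dh_x(t)$ that persists through the sticky-coupling phase — obtained from the BTRW representation and the quenched Gaussianity (Propositions~\ref{prop:mean-and-variance-of-h}--\ref{prop:monotonicity-of-mean-variance}) — since the comparison in Lemma~\ref{lem:two-norm-to-one-norm} takes the maximal discrepancy as an input.

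The main obstacle, and the place where genuine work is needed, is the lower bound on the angle bracket of $V_t$ under the sticky coupling in dimension two. As emphasized in Remark~\ref{rem:quadratic-variation-bound-difficulty}, the naive Cauchy--Schwarz bound $\|dh(t)\|_2^2 \ge V_t^2/|\Lambda_n|$ loses a factor of $n^2$, which is fatal here. The workaround is to use Lemma~\ref{lem:two-norm-to-one-norm}: the sticky coupling, when it fails to couple an update at $x$, produces a discrepancy $dh_x$ that is at least order one with positive probability, so the number of nonzero discrepancies is, with high probability, comparable to $V_t$ (up to the $\ell^\infty$ bound), and therefore $\|dh(t)\|_2^2 \gtrsim V_t$ rather than $V_t^2/n^2$. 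Feeding this improved lower bound on the angle-bracket growth into an optional-stopping / hitting-time estimate for supermartingales (in the style of the arguments in~\cite{CLL-mixing-simplex,CLL-nabla-phi}) then closes the multi-scale induction. I would organize the write-up so that the concentration statement Lemma~\ref{lem:two-norm-to-one-norm} and the $\ell^\infty$ regularity bound are invoked as already-established inputs, and this proof is reduced to: (i) the one-line application of Lemma~\ref{lem:stage-1} to control $V_{T_0}$; (ii) the dyadic decomposition of the remaining range; (iii) a per-scale hitting-time bound combining the supermartingale property, the angle-bracket lower bound, and a Dubins--Schwarz-type or Freedman-type estimate; and (iv) summing the geometric series, together with a union bound to absorb the $o(1)$-probability bad events at each stage.
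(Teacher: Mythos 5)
Your proposal matches the paper's argument in its essentials: Lemma~\ref{lem:stage-1} is used as a black box to get $V_{T_0}\le n^2(\log n)^{-5}$, and then a multi-scale hitting-time analysis under the sticky coupling, driven by the angle-bracket lower bound (Proposition~\ref{prop:angle-bracket-lower-bound}), itself fed by the $\ell^2$-to-$\ell^1$ comparison (Lemma~\ref{lem:two-norm-to-one-norm}) and an $\ell^\infty$ bound on discrepancies, brings $V$ down to $(\log n)^5$. Two small quantitative remarks. First, the number of $(\log n)$-adic scales from $n^2(\log n)^{-5}$ to $(\log n)^5$ is $O(\log n/\log\log n)$, not $O(\log\log n)$; this is harmless precisely because, as the paper shows, the per-scale hitting time $\cT_{i+1}-\cT_i$ is geometrically decaying in $i$ (of order $n^2(\log n)^{-i}$), so the sum over all scales contributes only $O(n^2)$ — the $O(n^2\log\log n)$ in the statement is entirely paid in Phase~1 (Lemma~\ref{lem:stage-1}), not in the sticky phase. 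Your phrasing ``summing a geometric series over $O(\log\log n)$ scales contributes $O(n^2\log\log n)$'' conflates these two accountings; either of them separately would work but the sentence as written is inconsistent. Second, you invoke a Freedman/Dubins--Schwarz-type diffusivity estimate for supermartingales, which requires bounded increments; the paper handles this by passing to a truncated supermartingale $\widetilde V_t$ (its~\eqref{eq:tilde-A}), and to remove the truncation with high probability it needs a maximal inequality for nonnegative supermartingales (Lemma~\ref{lem:V-t-localization}, via Ville's inequality). Your write-up should make that truncation explicit, since otherwise the per-scale hitting-time estimate does not directly apply.
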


    Note that there is still a remaining piece of the argument to get $V_t$ to actually hit zero. That step is reasonably straightforward and we will return to it in the following section; the remainder of this section will be dedicated to introducing the \emph{sticky coupling} and then establishing Proposition~\ref{prop:V-t-down-to-polylog-scales}.

    \subsection{The sticky coupling}
    We begin by describing the \emph{sticky coupling} of two Gaussian random variables, which is an \emph{optimal coupling} in the sense that it maximizes the probability that the two random variables agree.  
    
       \begin{definition}
       For two Gaussian random variables $X\sim \cN(-\mu,1)$ and $Y\sim \cN(\mu,1)$, the sticky coupling $\mathsf{P}_\mu$ is given as follows. Define the sub-probability measures 
    \begin{align}\label{eq:nu-1}
        d\nu_0^\mu(x) = (2\pi)^{-1/2} \big(e^{ - (x-\mu)^2/2}\wedge e^{ - (x+\mu)^2/2}\big)dx\,,
    \end{align}
    and 
    \begin{align}\label{eq:nu-2-3}
        d\nu_1^\mu(x)= (2\pi)^{-1/2}\big(e^{ - (x-\mu)^2/2} - e^{ - (x+\mu)^2/2}\big)\one_{\{x\ge 0\}}dx\,.
    \end{align}
    (Note that $d\nu^\mu_0 + d\nu^\mu_1$ is the law of $\cN(\mu,1)$.) 
    Then draw $(X,Y)\sim \mathsf{P}_\mu$ as follows: 
        \begin{align*}
            \begin{cases}
                    X=Y \sim \nu_0^\mu & \quad \mbox{w.prob. } \nu_0^\mu(\mathbb R) \\
                    -X = Y \sim \nu_1^\mu & \quad \mbox{w.prob. } \nu_1^\mu(\mathbb R)
            \end{cases}\,.
        \end{align*}
       \end{definition}
        It can be easily seen both that this coupling is monotone in the sense that $Y\ge X$, and that this coupling is an \emph{optimal} coupling of two Gaussians, i.e., $\mathbb P(X\ne Y) = \|N(-\mu,1) - N(\mu,1)\|_\tv$, which we recall is exactly $\erf(\mu/\sqrt{2})$. (Actually the details of how $(X,Y)$ are coupled on the complement of $\nu_0^\mu$ is not important but the choice we make is for concreteness.)
        Further, the above can be immediately generalized to the following.
        
        \begin{definition}\label{def:Gaussian-sticky-coupling}
        The sticky coupling $\mathsf{P}$ between two Gaussian random variables $Z_a \sim N(a,1)$ and $Z_b\sim N(b,1)$ for $a\le b$ is defined by taking $(X,Y)\sim \mathsf{P}_{(b-a)/2}$ and drawing $Z_a = X+\frac{a+b}{2}$ and $Z_b = Y+\frac{a+b}{2}$.  
        \end{definition}

        With this in hand, we can now define a sticky coupling of the Glauber dynamics for the DGFF, using $\mathsf{P}$ as its building block. 
                	
        \begin{definition}\label{def:DGFF-sticky-coupling}
        The sticky coupling $\mathsf{P}$ between two DGFF Glauber dynamics chains $\bh^{\bg}$ and $\bh^{\bg'}$ is defined by using the same Poisson noise for the update sequence, and whenever the height at a site $x\in \Lambda_n$ is being updated, using the sticky coupling of Definition~\ref{def:Gaussian-sticky-coupling} on the two Gaussian updates corresponding to the two dynamics, at $x$. 
        \end{definition}
        
        By monotonicity of $\mathsf{P}_\mu$, the sticky coupling of the DGFF chains is monotone i.e., if $\bg \le\bg'$ pointwise, then $\bh^\bg(t) \le \bh^{\bg'}(t)$ pointwise for all $t$. In particular, under the sticky coupling of $\bh^\bn$ and $\bh^\bg$, we have that $dh_x(t)$ and $V_t$ are non-negative for all $t$. 

        At this point we make a simple but useful observation. Unlike the identity coupling, the sticky coupling lends itself to discrepancies $dh_x(t)$ that are either zero or order-one-sized. We will use the following simple calculus exercise to prove this.   
        \begin{lemma}\label{lem:domination-of-bernoullis}
        Let $\nu_1^\mu$ be as in~\eqref{eq:nu-2-3}. There exists a universal constant $0<\zeta \le 1$ such that if $Y$ is drawn as $\bar \nu_1^\mu$ ($\nu_1^\mu$ normalized to be a probability measure), then 
        \begin{align*}
            Y\succeq \zeta \cdot \mbox{Ber}(\zeta) \qquad \mbox{for all $\mu>0$}\,,
        \end{align*}
        where $\succeq$ denotes stochastic domination.
        \end{lemma}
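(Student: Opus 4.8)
The plan is to reduce the stochastic domination to a single scalar inequality, uniform in $\mu$, and then verify it by an elementary two‑regime estimate. First observe that $\nu_1^\mu$ from~\eqref{eq:nu-2-3} is absolutely continuous and supported on $(0,\infty)$, so $Y\sim\bar\nu_1^\mu$ is a.s.\ positive with no atoms; hence $Y\succeq\zeta\cdot\mathrm{Ber}(\zeta)$ amounts to $\mathbb P(Y\ge t)\ge\mathbb P(\zeta\cdot\mathrm{Ber}(\zeta)\ge t)$ for all $t\in\mathbb R$. The right‑hand side equals $1$ for $t\le0$, equals $\zeta$ for $0<t\le\zeta$, and equals $0$ for $t>\zeta$; comparing against $\mathbb P(Y\ge t)$ (which is $1$ for $t\le0$ since $Y>0$ a.s., and is non‑increasing in $t$) shows that the whole family of inequalities collapses to the single requirement $\mathbb P(Y\ge\zeta)\ge\zeta$. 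So I would fix the goal: exhibit a universal $\zeta\in(0,1]$ with $\inf_{\mu>0}\mathbb P(Y\ge\zeta)\ge\zeta$.

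Next I would compute $\mathbb P(Y\ge\zeta)$ in closed form. Writing $G\sim\cN(0,1)$ with density $\phi$, the change of variables $x\mapsto x-\mu$ and $x\mapsto x+\mu$ in the two Gaussian integrals defining $\nu_1^\mu$ gives $\nu_1^\mu([\zeta,\infty))=\mathbb P\bigl(G\in(\zeta-\mu,\zeta+\mu]\bigr)$ and $\nu_1^\mu(\mathbb R)=\mathbb P(|G|<\mu)$, so that $\mathbb P(Y\ge\zeta)=\mathbb P(G\in(\zeta-\mu,\zeta+\mu])/\mathbb P(|G|<\mu)$.

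The only step requiring care is lower bounding this ratio uniformly in $\mu$, and the subtlety is that the obvious comparison runs the wrong way: since the symmetric window $[-\mu,\mu]$ carries the largest Gaussian mass among all windows of length $2\mu$, the ratio is always $\le1$, which is useless. I would instead use two complementary bounds. For small and moderate $\mu$: substituting $x=u+\zeta$ and using $e^{-\zeta u}\ge e^{-\zeta\mu}$ on $[-\mu,\mu]$ gives $\mathbb P(G\in(\zeta-\mu,\zeta+\mu])=e^{-\zeta^2/2}\int_{-\mu}^{\mu}\phi(u)e^{-\zeta u}\,du\ge e^{-\zeta^2/2-\zeta\mu}\,\mathbb P(|G|<\mu)$, hence $\mathbb P(Y\ge\zeta)\ge e^{-\zeta^2/2-\zeta\mu}$. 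For large $\mu$: when $\mu\ge\zeta$ the window $(\zeta-\mu,\zeta+\mu]$ contains $[-(\mu-\zeta),\mu-\zeta]$, and the denominator is $\le1$, so $\mathbb P(Y\ge\zeta)\ge\mathbb P(|G|\le\mu-\zeta)$. Combining, for any threshold $M>\zeta$,
\[
\inf_{\mu>0}\mathbb P(Y\ge\zeta)\ \ge\ \min\bigl\{\,e^{-\zeta^2/2-\zeta M},\ \mathbb P(|G|\le M-\zeta)\,\bigr\}.
\]

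It then remains only to pick explicit constants making the right side at least $\zeta$; for instance $\zeta=\tfrac14$ and $M=2$ give $e^{-1/32-1/2}>0.58$ and $\mathbb P(|G|\le\tfrac74)>0.9$, both comfortably above $\tfrac14$. Since the argument is entirely elementary, there is no genuine obstacle here — the one place to stay alert is the direction of the interval comparison noted above, together with the bookkeeping that the same $\zeta$ must serve simultaneously as the atom location and the atom probability in $\zeta\cdot\mathrm{Ber}(\zeta)$; the concluding numerics have ample slack, and one could optimize $\zeta$ and $M$ jointly for a larger constant if desired.
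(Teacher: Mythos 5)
Your proof is correct, and it takes a genuinely different route from the paper's. The paper proceeds by establishing a uniform \emph{density} lower bound, namely that there exist $\eta,\delta>0$ with $\inf_{x\in[\mu+\eta,\,\mu+2\eta]} \frac{d\bar\nu_1^\mu}{dx}(x)\ge\delta$ uniformly over $\mu>0$, so that $\P(Y\ge\eta)\ge\delta\eta$ and one may take $\zeta=\min\{\eta,\delta\eta\}$; the density bound itself is verified by a Taylor/asymptotic argument for small $\mu$ and a direct pointwise bound for $\mu\ge\mu_0$. You instead work at the level of the CDF: you first isolate the single scalar inequality $\P(Y\ge\zeta)\ge\zeta$ as equivalent to the domination (a simplification the paper leaves implicit), then observe that $\P(Y\ge\zeta)=\P\bigl(G\in(\zeta-\mu,\zeta+\mu]\bigr)/\P(|G|<\mu)$ in closed form, and lower bound this ratio by the multiplicative estimate $e^{-\zeta^2/2-\zeta\mu}$ for moderate $\mu$ and the interval-inclusion bound $\P(|G|\le\mu-\zeta)$ for large $\mu$. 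Both proofs rest on a small-$\mu$/large-$\mu$ split and are elementary, but your version avoids asymptotic expansions and differentiations, gives an explicit admissible constant (e.g., $\zeta=\tfrac14$) with visible slack, and the reduction to $\P(Y\ge\zeta)\ge\zeta$ makes transparent exactly what needs to be shown. The one direction-of-inequality subtlety you flag (the symmetric window carries \emph{more} Gaussian mass, so the naive ratio bound runs the wrong way) is handled correctly.
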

        \begin{proof}
                    Evidently, $\bar \nu_1^\mu$ is supported on $\mathbb R_+$.
                    It therefore suffices for us to show that there exist $\eta,\delta>0$ such that 
                    \begin{align}\label{eq:nu-1-density-lower-bound}
                       \inf_{x\in [\mu+\eta, \mu+2\eta]} \frac{d\bar\nu_1^\mu(x)}{dx} \ge \delta \qquad \mbox{uniformly over all $\mu>0$}\,.
                    \end{align}
                    This is sufficient via the choice of $\zeta = \min\{\eta,\delta \eta\}$ by non-negativity of $\mu$. 
                    
            We first consider $\mu$ small. Consider the scaling of the normalization factor $\nu_1^\mu(\mathbb R_+)$ in $\mu$. For a standard normal random variable $Z$, we have  
            \begin{align*}
                \nu_1^\mu (\mathbb R_+) = \mathbb P(Z\ge -\mu) - \mathbb P (Z\ge \mu) = \erf(\frac{\mu}{\sqrt 2}) = \frac{\sqrt{2}}{\sqrt{\pi}}\mu(1+o_\mu(1))\,
            \end{align*}
            where the $o_\mu(1)$ term goes to zero as $\mu \downarrow 0$. 
            By differentiating, we simultaneously have that, as long as $x\in K$ for some compact set $K$,  
            \begin{align*}
            \frac{1}{\mu}(e^{ - (x-\mu)^2/2}- e^{ - (x+\mu)^2/2}) = xe^{-x^2/2}(1+o_\mu(1)) 
            \end{align*}
            where the $o_\mu(1)$ term is uniform over all $x\in K$. 
            Therefore, 
                \begin{align*}
                   \inf_{x\in [\mu+\eta, \mu+2\eta]} d\bar \nu_1^\mu(x) = \frac 12(1+o_\mu(1)) xe^{- x^2/2} \ge \frac 12(1+o_\mu(1)) \eta e^{ -2\eta^2}\,.
                \end{align*}
            As a consequence, for any $\eta$, there exists a choice of $\delta$ such that uniformly over all $\mu<\mu_0$ for some $\mu_0$, the right-hand side is at least that $\delta$. 
 
            Let us now consider $\mu \ge \mu_0$. For all such $\mu$, we have for all $x\in [\mu+\eta,\mu+2\eta]$, 
        \begin{align*}
            d\bar \nu_1^\mu(x) \ge d\nu_1^\mu(x) &  = (2\pi)^{-1/2} (e^{-2\eta^2} - e^{ -2\mu^2}) 
        \end{align*}
        at which point as long as $\eta<\mu_0/2$, there will be a uniform choice of $\delta$ such that this is at least $\delta$ uniformly over $\mu \ge \mu_0$. Together, these imply the claimed~\eqref{eq:nu-1-density-lower-bound}. 
\end{proof}

    \subsection{Regularity estimates}
    Before using the above property of the sticky coupling to contract the volume supermartingale in an $O(n^2 \log \log n)$ time, we will need some a priori regularity estimates on the various processes $\bh^\bn,\bh^\bg$ and $d\bh$ that will hold for all $t \ge t_\star$.  
    
    We begin by proving the claimed upper bound on $V_t$ after the identity coupling phase. 
    
        \begin{proof}[\textbf{\emph{Proof of Lemma~\ref{lem:stage-1}}}]
        For any two initializations $\bh_1(0)$ and $\bh_2(0)$, if the corresponding DGFF dynamics $\bh_i^\cP(t)$ are coupled via the identity coupling, then by \eqref{eq:h-P}, 
        		\begin{align*}
		\bh_1^{\cP}(t) - \mathbb E[\bh_1^{\cP}(t)] = \bh_2^{\cP}(t) - \mathbb E[\bh_2^{\cP}(t)]\,.
		\end{align*}
        Thus, conditional on any Poisson update sequence $\cP$, under the identity coupling, we have  
        \begin{align*}
         dh_x^\cP(t) = h_x^{\bn,\cP}(t)  - h_x^{\bg,\cP}(t) = \mathbb E[h_x^{\bn,\cP}(t)]  - \mathbb E[ h_x^{\bg,\cP}(t)]\,, \qquad \mbox{for all $x\in \Lambda_n$}\,.
        \end{align*}
        This expectation is then upper bounded as  
        \begin{align*}
            \mathbb E[h_x^{\bn,\cP}(t)] - \mathbb E[h_x^{\bg,\cP}(t)]\le n\mathbb P(S_{x,-t}^{\cP}\notin \partial \Lambda_n) - \mathbb E[g(S_{x,-t}^{\cP})] \le 2n \mathbb P(S_{x,-t}^{\cP} \notin \partial \Lambda_n)
        \end{align*}
By Proposition~\ref{prop:mean-of-h}, with probability  $1-o(n^{-4})$,  $\cP_\infty$ is such that the conclusion of the proposition holds at $t  = t_\star +s n^2$. For any such $\cP_\infty$,  
        applying the bound of Proposition~\ref{prop:mean-of-h}, we see that for every $x\in \Lambda_n$, we have 
        \begin{align*}
            dh_x^{\cP}(t) \le 2n(\widehat \varphi_\one (x) + o(1)) e^{ - \lambda_\one t}\,.
        \end{align*}
        By definition of $t_\star$ and the choice of $t$, this is at most $2(\widehat \varphi_\one(x) + o(1))e^{-\pi^2 s/2}$. 
    \end{proof}
    
    It will also be helpful for us to use this mean bound, together, importantly, with the Gaussianity of $\bh^\bn(t)$ and $\bh^\bg(t)$, to get logarithmic maximal bounds on these processes and hence their discrepancies.

    \begin{lemma}\label{lem:max-bound}
    For every $\bg$, such that $\|\bg\|_\infty \le n$, we have 
    \begin{align*}
        \mathbb P\big(\|\bh^\bg(t)\|_\infty \le C\log n \mbox{ for all $t_\star \le t \le n^3$}\big) = 1-o(1)\,.
    \end{align*}
    As a result, under \emph{any} coupling, $\mathbb P(\|d\bh(t)\|_\infty \le 2C \log n \mbox{ for all $t_\star \le t\le n^3$})= 1- o(1)$. 
    \end{lemma}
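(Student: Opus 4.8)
The plan is to condition on the Poisson clock process and exploit that, conditionally on it, every value taken by $\bh^\bg$ is a Gaussian with mean $O(1)$ (once the time exceeds roughly $t_\star$) and variance $O(\log n)$; Gaussian tail bounds and a union bound then finish the proof. Since the Glauber dynamics is piecewise constant in $t$, changing only at clock rings, $\sup_{t_\star\le t\le n^3}\|\bh^\bg(t)\|_\infty$ is attained either at one of the $N\le 2n^5$ clock rings in $\Lambda_n\times[0,n^3]$, or at the value held at some site $x$ from its last update before time $t_\star$. The event $\{N\le 2n^5\}$ and the event that every site is updated during $[t_\star-n^2,t_\star]$ both hold with probability $1-o(1)$ by Poisson concentration; on their intersection it suffices to bound $|h^\bg(\nu_j,\sigma_j)|$ uniformly over all clock rings $(\nu_j,\sigma_j)$ with $\sigma_j\in[t_\star-n^2,n^3]$.

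Fix the entire update sequence $\cQ$ and such a clock ring $(\nu_j,\sigma_j)$. Then $h^\bg(\nu_j,\sigma_j)$ is a fixed affine function of the i.i.d.\ Gaussians driving the updates, hence Gaussian; writing $\cQ$ restricted to $[0,\sigma_j]$ as $\vartheta_{\sigma_j}\cP$, Theorem~\ref{thm:rw-representation-exact-equality} and Propositions~\ref{prop:mean-and-variance-of-h}--\ref{prop:monotonicity-of-mean-variance} (with $t=\sigma_j$) give that its conditional mean is $\mathbb E_\cE[g(S^{\nu_j,\cP}_{-\sigma_j},0)]$, hence at most $n\,\mathbb P_\cE(S^{\nu_j,\cP}_{-\sigma_j}\notin\partial\Lambda_n)$ in absolute value since $\|\bg\|_\infty\le n$, while its conditional variance is $\var[h^\bg(\nu_j,\sigma_j)\mid\cQ]\le \bG(\nu_j,\nu_j)\le C\log n$ by Fact~\ref{fact:green-function-srw}; note this variance bound holds with no exceptional event.

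It thus remains to show that, with probability $1-o(1)$ over $\cQ$, the survival probability $\mathbb P_\cE(S^{\nu_j,\cP}_{-\sigma_j}\notin\partial\Lambda_n)$ is $O(1/n)$ simultaneously over all clock rings with $\sigma_j\ge t_\star-n^2$. Since this probability is decreasing in the duration $\sigma_j$, it suffices to bound it over the fixed duration $t_0:=t_\star-2n^2$ (still $\Theta(n^2\log n)$), for the backward walk started at the clock-ring time $\sigma_j$; the latter depends only on $\cQ$ in the window $[\sigma_j-t_0,\sigma_j]$, which, conditionally on $\sigma_j$ being a clock ring at $\nu_j$, is still governed by a Poisson process. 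I would then run the argument of Proposition~\ref{prop:mean-of-h}, but with the deviation scale $r=(\log n)^2$, so that the failure probability furnished by Lemma~\ref{lem:number-of-clock-rings} is $e^{-\Omega((\log n)^2)}$, small enough to survive a union bound over the $N\le 2n^5$ clock rings: on the good event, using Lemma~\ref{lem:time-change-of-srw}, this survival probability is at most $\mathbb P(Y^{\nu_j}_{t_0-r\sqrt{t_0}}\notin\partial\Lambda_n)=(\widehat\varphi_\one(\nu_j)+o(1))e^{-\lambda_\one(t_0-r\sqrt{t_0})}$ by Lemma~\ref{lem:survival-probability-srw}, which is $O(1/n)$ by Fact~\ref{fact:order-of-magnitudes} together with $e^{-\lambda_\one t_\star}=1/n$, $\lambda_\one n^2=O(1)$ and $\lambda_\one r\sqrt{t_0}=o(1)$. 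Consequently, on all the good events above, conditionally on $\cQ$ each relevant clock ring satisfies $\mathbb P(|h^\bg(\nu_j,\sigma_j)|>C\log n\mid\cQ)\le 2\exp\big(-(C\log n-O(1))^2/(2C'\log n)\big)\le n^{-c}$ for a constant $c=c(C)$ that is arbitrarily large for $C$ large; taking $C$ with $c(C)>5$ and summing over the $\le 2n^5$ rings proves the first assertion. For the last sentence, the marginal law of $\bh^\bg(t)$ (resp.\ $\bh^\bn(t)$) does not depend on the coupling and $\|\bn\|_\infty=n$, so applying the first assertion to $\bg$ and to $\bn$ and using $\|d\bh(t)\|_\infty\le\|\bh^\bn(t)\|_\infty+\|\bh^\bg(t)\|_\infty$ gives the claim under any coupling.

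The main obstacle is the third paragraph: upgrading the single-time quenched survival estimate of Proposition~\ref{prop:mean-of-h} to one valid simultaneously over the whole range $[t_\star-n^2,n^3]$, equivalently over the $\sim n^5$ clock rings at which $\bh^\bg$ can change, with a failure probability small enough to be absorbed by the union bound over those rings. Monotonicity of the survival probability in the duration (letting one compare against a single fixed duration $t_0$) and the freedom to take a polylogarithmic deviation scale in Lemma~\ref{lem:number-of-clock-rings} are precisely what make the probabilistic budget close; the remaining conditional-Gaussian tail estimates are routine.
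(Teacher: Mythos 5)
Your proposal is correct and follows essentially the same route as the paper: condition on the update sequence, use the BTRW representation and Propositions~\ref{prop:mean-and-variance-of-h}--\ref{prop:monotonicity-of-mean-variance} (plus Fact~\ref{fact:green-function-srw}) to get that each $h^\bg(\cdot,t)$ is conditionally Gaussian with mean $O(1)$ (via the survival probability estimate at a $(\log n)^2$ deviation scale, as in the proof of Proposition~\ref{prop:mean-of-h}) and deterministic variance $O(\log n)$, then apply Gaussian tails and a union bound over the $\approx n^5$ sites and clock ring times, and finish with a triangle inequality for the discrepancy. You are in fact slightly more careful than the paper at two points: you note the variance bound holds with no exceptional $\cP$-event, and you correctly count $\le 2n^5$ (not $2n^3$) clock rings over $\Lambda_n\times[t_\star,n^3]$, while handling the value held at a site since its last update before $t_\star$ by a separate coupon-collector event.
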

    \begin{proof}
        We use the fact that we know the process is Gaussian, together with the fact that its variance \emph{increases} to that of the DGFF, to bound its maximum. We will union bound over all times between $t_\star$ and $n^3$, and all sites $x\in \Lambda_n$ to obtain the desired result. In order to union bound over the set of times, condition on the Poisson update sequence $\cP$ (in between the updates, the processes $\bh^\bg(t)$ are unchanged).  
        
        Fix any update time $t\in \cP$ such that $t_\star \le t\le n^3$. For any fixed $x\in \Lambda_n$, evidently $h^\bg(x,t)$ is a Gaussian with mean satisfying
        $$|\mathbb E[g(S_{x,-t}^{\cP})]|\le n\mathbb P(S_{x,-t}^{\cP}\notin \partial \Lambda_n) \le n (\widehat\varphi_\one (x)+o(1)) e^{ - \lambda_\one t}\,,
        $$
        and per Proposition~\ref{prop:mean-and-variance-of-h}, and Fact~\ref{fact:green-function-srw}, 
        $$
        \var(h^{\bg,\cP}(x,t)) \le G(x,x)  = O(\log n)\,.
        $$ 
        Since $t \ge t_\star$, with probability $1-o(n^{-8})$, $\cP$ is such that the mean is $O(1)$, say at most some universal constant $C_1$, and the variance is at most $C_2 \log n$ for some other universal constant. We see that this implies a uniform upper bound on the tails of $|h^\bg_x(t)|$, yielding that with probability $1-o(n^{-8})$, $\cP$ is such that for large enough $C$, 
        \begin{align*}
            \mathbb P ( |h^{\bg,\cP}(x,t)|\ge C\log n) = o(n^{ - 8})\,.
        \end{align*}
        Summing over the $n^2$ many vertices $x\in \Lambda_n$, for any fixed update time, we have that with probability $1-o(n^{-6})$, for large enough $C$, 
        $$
        \mathbb P\Big(\|\bh^{\bg,\cP}(t)\|_\infty \ge C\log n\Big) =o(n^{-6})\,.
        $$
        Now, with probability $1-o(1)$, there are at most $2n^3$ many Poisson clock updates between $t_\star$ and $n^3$, and we can union bound the above probabilities over these $n^3$ many Poisson clock ring times to deduce the desired bound. 
    \end{proof}

    \subsection{Bringing the volume supermartingale down}
We now move on to the analysis of the second phase in which we have switched to the sticky coupling. This analysis involves multiple stages where the fact that $V_t$ continues oscillating enough to eventually hit zero is established using slightly different types of arguments depending on how close to zero the process is. The remainder of this section is dedicated to the most substantive of these stages, which is getting the volume supermartingale down from $n^2(\log n)^{-5}$ (which upper bounds its value after time 
\begin{align*}
    T_0 = t_\star + \frac{12}{\pi^2} n^2 \log \log n\,,
\end{align*}
per Lemma~\ref{lem:V-t-exponential-decay}) to poly-logarithmic $(\log n)^5$ scale, and thus establishing Proposition~\ref{prop:V-t-down-to-polylog-scales}. 
    
    In order to perform this, we localize $V_t$ to a cascade of scales $[L_{i+1},L_{i-1}]$ given by 
    \begin{align*}
       L_i = n^{2}(\log n)^{-5-i}
    \end{align*}
   for $i\ge 0$. Then for $i\ge 0$, consider the sequence of hitting times
    \begin{align}\label{eq:cT-i}
        \cT_i = \inf \{t \ge T_0: V_t \le L_i\} \qquad \mbox{and let} \qquad I = \inf\{i: L_i \le (\log n)^5\}\,.
    \end{align}
    In particular, $I = O((\log n)/(\log \log n))$. The goal of this stage of the proof, as indicated, is to show that $\cT_I  - \cT_0 = O(n^2 \log \log n)$.

    The approach we take to proving this is to leverage the fact that $V_t$ is a non-negative jump super-martingale (see Claim~\ref{clm:V-t-supermartingale} below), so if we can lower bound its angle bracket $\langle V\rangle_t$, defined as follows, we can upper bound its hitting time of lower scales. 
    
    \begin{definition}\label{def:supermartingale-angle-bracket}
    By Doob's decomposition theorem, an adapted process $(V_t)_{t\ge 0}$ has a predictable part, and a martingale part, say $(M_t)_{t\ge 0}$. We denote by the angle bracket $\langle V \rangle_t$, the predictable quadratic variation of $M_t$, which is to say the unique predictable process such that $(M_t^2  - \langle V \rangle_t)_{t \ge 0}$ is a martingale. 
    \end{definition}
    
    In what follows we will abuse the terminology \emph{quadratic variation} to mean this angle bracket. 
    For technical reasons, it will be preferable to work with a supermartingale with bounded, rather than unbounded increments. Towards that, let
    \begin{align}\label{eq:cR-i}
        \cR_i = \inf\{t\ge \cT_i : V_t \ge L_{i-1}\}\,, \qquad \mbox{and} \qquad \cR= \min_i \cR_i\,.
    \end{align}
    and introduce the following truncated version of $V_t$, 
    \begin{align}\label{eq:tilde-A}
        \widetilde V_t = \begin{cases}
            V_t & t\le \cR \\
            L_i & t\ge \cR \mbox{ and } \cR = \cR_i <\cR_{i+1}
        \end{cases}
    \end{align}
We begin by establishing that $V_t$ and in turn $\widetilde V_t$ are supermartingales. 

\begin{claim}\label{clm:V-t-supermartingale}
The processes $(V_t)_{t\ge 0}$ and $(\widetilde V_t)_{t\ge 0}$ are jump supermartingales. Furthermore, for $t \in [\cT_i, \cT_{i+1})$, the jumps of $\widetilde V_t$ are bounded in absolute value by $L_{i-1}$.  
\end{claim}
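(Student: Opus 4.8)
The plan is to verify the supermartingale property of $V_t$ directly from the dynamics and then transfer it to $\widetilde V_t$ via optional stopping. First I would recall that under the sticky coupling both chains use the same Poisson clock $\cP$, and that at a clock ring at a site $x\in \Lambda_n$, the values $h^\bn(x,\cdot)$ and $h^\bg(x,\cdot)$ are updated to Gaussians with means $\frac14\sum_{y\sim x} h^\bn(y,t^-)$ and $\frac14\sum_{y\sim x} h^\bg(y,t^-)$ and unit variance, coupled via Definition~\ref{def:Gaussian-sticky-coupling}. Crucially, the sticky coupling is \emph{mean-preserving}: $\mathbb E[Z_a] = a$ and $\mathbb E[Z_b]=b$ marginally, so in particular the conditional expectation of the new value of $dh_x$ given the configuration just before the ring equals $\frac14\sum_{y\sim x} dh_y(t^-)$. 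I would compute the generator of $V_t$: for each $x$, at rate $1$, the contribution $dh_x$ to $V_t$ is replaced by a random variable whose conditional mean is $\frac14\sum_{y\sim x} dh_y$. Hence the drift of $V_t$ is
\begin{align*}
\sum_{x\in \Lambda_n}\Big(\tfrac14\sum_{y\sim x} dh_y(t) - dh_x(t)\Big) = \sum_{x\in\Lambda_n}(\Delta_n dh)_x(t) = -\lambda_\one\langle \varphi_\one, d\bh(t)\rangle \cdot(\text{something}) \le 0,
\end{align*}
where the key point is simply that $\sum_{x\in\Lambda_n}(\Delta_n d\bh)_x \le 0$ because $d\bh(t)\ge 0$ pointwise (by monotonicity of the sticky coupling, see the discussion after Definition~\ref{def:DGFF-sticky-coupling}) and $\Delta_n$ has nonpositive row sums — with strict negativity exactly at sites neighboring $\partial\Lambda_n$ where $dh$ (being zero on the boundary) leaks mass out. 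Concretely, $\sum_{x}(\Delta_n v)_x = -\sum_{x\sim \partial\Lambda_n}\tfrac14 (\#\text{boundary neighbors of }x)\, v_x \le 0$ for any $v\ge 0$ vanishing on $\partial\Lambda_n$. This shows $V_t + \int_0^t (\text{nonnegative})\,ds$ is a martingale, i.e., $V_t$ is a jump supermartingale.

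For $\widetilde V_t$, I would argue that it is obtained from $V_t$ by stopping the process at the first time $\cR$ that $V$ re-enters a higher scale after hitting $\cT_i$, and then freezing it at the constant value $L_i$. Since a supermartingale stopped at a stopping time remains a supermartingale (optional stopping for the predictable/martingale Doob decomposition, valid here since increments have exponential tails by the $\ell^\infty$ bound of Lemma~\ref{lem:max-bound}, so local boundedness/uniform integrability on bounded time windows is not an issue), and since a constant process is trivially a supermartingale, $\widetilde V_t$ is a jump supermartingale. I should be slightly careful that the definition~\eqref{eq:tilde-A} pieces together $V_t$ before $\cR$ with a constant after $\cR$, and that the constant matches $V_{\cR_i}$ up to the scale identification $V_{\cT_i}\le L_i$ — one checks $\widetilde V$ is continuous at the stopping time up to a harmless downward jump, which only helps the supermartingale property.

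For the jump-size bound: on the interval $[\cT_i,\cT_{i+1})$ we have $\widetilde V_t = V_t$ as long as $t<\cR$, and $V_t$ stays in $[L_{i+1},L_{i-1}]$ on this interval by definition of $\cT_{i+1}$ and of $\cR_i$ (once $V$ would exceed $L_{i-1}$ we are at time $\cR_i\ge\cR$ and $\widetilde V$ is frozen). A single update changes only one coordinate $dh_x$, so $|\Delta V_t| = |dh_x(t)-dh_x(t^-)| \le |dh_x(t)|+|dh_x(t^-)|$, and since $V_t\ge 0$ with $d\bh\ge 0$ pointwise, each coordinate is at most $V_t\le L_{i-1}$ both before and after — but actually the cleaner bound is $|\Delta V_t|\le \max(V_{t^-},V_t)\le L_{i-1}$ directly, because $V$ only decreases by removing a nonnegative coordinate or increases by adding one, and both endpoints lie below $L_{i-1}$ on this interval (with the truncation precisely ensuring $\widetilde V$ never exceeds $L_{i-1}$ there). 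Thus the jumps of $\widetilde V_t$ on $[\cT_i,\cT_{i+1})$ are bounded in absolute value by $L_{i-1}$.

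The main obstacle I anticipate is bookkeeping the truncation carefully — making sure that $\widetilde V$ as defined in~\eqref{eq:tilde-A} genuinely agrees with ``$V$ stopped at $\cR$ then held constant'' (in particular that the scale index $i$ at the freezing time is well-defined and that the frozen value $L_i$ does not create an \emph{upward} jump that would break the supermartingale property), and confirming the integrability needed to invoke optional stopping for the Doob decomposition rather than just for bounded stopping times; the exponential tail control from Lemma~\ref{lem:max-bound} combined with restricting to finite time horizons handles this, but it warrants an explicit remark.
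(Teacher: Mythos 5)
Your argument is essentially the paper's, and correct: for the first part you compute the drift of $V_t$ using that the sticky coupling is marginally mean-preserving, observe $\sum_x (\Delta_n d\bh)_x \le 0$ because $d\bh\ge 0$ vanishes on $\partial\Lambda_n$ and $\Delta_n$ has nonpositive column sums (strictly negative at boundary-adjacent sites), and this is exactly the paper's computation; for the jump-size bound you note $0\le \widetilde V_t\le L_{i-1}$ on $[\cT_i,\cT_{i+1})$, which is what the paper means by ``follows from non-negativity and the definition.''

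One small imprecision worth flagging: you write that the transition of $\widetilde V$ at $\cR$ is a ``harmless downward jump,'' but the jump $\widetilde V_{\cR}-\widetilde V_{\cR^-} = L_i - V_{\cR^-}$ can in fact be \emph{upward}, since $V_{\cR^-}$ can lie anywhere in $(L_{i+1},L_{i-1})$ and in particular below $L_i$. What is true, and what actually saves the argument, is that this jump is deterministically $\le V_{\cR}-V_{\cR^-}$ (because $V_\cR\ge L_{i-1}>L_i$), so $\widetilde V_t\le V_{t\wedge\cR}$ with equality before $\cR$, and the supermartingale property transfers by comparison. This is exactly how the paper phrases it (``the jumps of $\widetilde V_t$ are deterministically less than or equal to the jumps of $V_t$''); it is a pure pathwise comparison of jumps and so no uniform-integrability or optional-stopping machinery is needed — the concerns you raise on that front, while not incorrect, are unnecessary overhead relative to the direct argument.
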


\begin{proof}
To see the fact that $V_t$ is a supermartingale, it suffices to consider its expected change between $t$ and $t'$ where $t'$ is the first time after $t$, when an update is made at some $x\in \Lambda_n$. Given $\cF_t$ (the filtration given by the history of the chains $\bh^\bn,\bh^\bg$ up to time $t$), the site at which this update occurs is uniformly distributed in $\Lambda_n$. If the update occurs at site $x$, then $\mathbb E[dh_x(t')]$ will be given by 
$$\mathbb E[h^\bn(x,t')] - \mathbb E[h^\bg(x,t')] = \Delta h^\bn(x,t) - \Delta h^\bg(x,t)\,.$$
As such, we have 
\begin{align*}
\mathbb E[V_{t'} - V_t \mid \cF_t] & = \frac{1}{|\Lambda_n|} \sum_x \Big(\frac{1}{4} \sum_{y\sim x} dh_y(t)   - dh_x(t)\Big)\,.
\end{align*}
This double sum has $y\in \Lambda_n$ appearing four times if $y\not \sim \partial \Lambda_n$, but only two or three times if $y\sim \partial \Lambda_n$.   Under any monotone coupling, $dh_y(t)$ is non-negative, and therefore 
\begin{align*}
	\mathbb E[V_{t'} - V_t \mid \cF_t] \le \frac{1}{|\Lambda_n|} \Big(\sum_{y\in \Lambda_n} dh_y(t) -   \sum_{x\in \Lambda_n} dh_x(t)\Big) = 0\,.
\end{align*}
This implies that $V_t$ is a jump supermartingale as claimed. To see that $\widetilde V_t$ is also a jump supermartingale, note by definition that the jumps of $\widetilde V_t$ are deterministically less than or equal to the jumps of $V_t$. The bound on the jumps of $\widetilde V_t$ follow from the fact that it is non-negative, and its definition. 
\end{proof}

We now use the supermartingale property of $V_t$ to establish that the effect of truncation in~\eqref{eq:tilde-A} is typically not felt; namely with high probability $V_t = \widetilde V_t$ for all time.

        \begin{lemma}\label{lem:V-t-localization}We have   
    \begin{align*}
        \mathbb P\Big( V_t \le \frac{\log n}{(\log\log n)^{1/2}} L_i \quad \mbox{for all $t\ge \cT_i$, for all $i\le I$}\Big) = 1-o(1)\,. 
    \end{align*}
    In particular, $\mathbb P( V_t = \widetilde V_t \mbox{ for all $t\ge 0$}) = 1-o(1)$. 
    \end{lemma}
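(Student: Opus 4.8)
The plan is to use the fact, established in Claim~\ref{clm:V-t-supermartingale}, that $(V_t)_{t\ge 0}$ is a non-negative (right-continuous, step) supermartingale, and to apply Ville's maximal inequality at each of the stopping times $\cT_i$ from~\eqref{eq:cT-i}. Since $V$ is a step process and $\cT_i=\inf\{t\ge T_0:V_t\le L_i\}$, right-continuity yields $V_{\cT_i}\le L_i$ on the event $\{\cT_i<\infty\}$. Conditioning on $\cF_{\cT_i}$, the process $(V_t)_{t\ge \cT_i}$ is a non-negative supermartingale started from $V_{\cT_i}\le L_i$, so for any $\lambda>1$,
\[
    \P\Big(\sup_{t\ge \cT_i}V_t\ge \lambda L_i \;\Big|\;\cF_{\cT_i}\Big)\le \frac{V_{\cT_i}}{\lambda L_i}\le \frac1\lambda\,,
\]
and taking expectations gives $\P\big(\cT_i<\infty,\ \sup_{t\ge \cT_i}V_t\ge \lambda L_i\big)\le 1/\lambda$.

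I would then choose $\lambda=\log n/(\log\log n)^{1/2}$ and union bound this estimate over all $i\le I$. Since $I=O(\log n/\log\log n)$ (as recorded just below~\eqref{eq:cT-i}), the total failure probability is at most $(I+1)/\lambda=O\big((\log\log n)^{-1/2}\big)=o(1)$, which is exactly the first displayed bound of the lemma (the bound being vacuous for those $i$ with $\cT_i=\infty$).

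For the ``in particular'' assertion, note that $L_{i-1}=(\log n)L_i$, while $(\log n)(\log\log n)^{-1/2}L_i<L_{i-1}$ for $n$ large. Hence on the good event just constructed, for every $i$ the process $V$ stays strictly below $L_{i-1}$ at all times $t\ge \cT_i$, so the stopping time $\cR_i$ in~\eqref{eq:cR-i} equals $+\infty$, and therefore $\cR=\min_i\cR_i=+\infty$; since $\cR\ge \cT_0\ge T_0$, the definition~\eqref{eq:tilde-A} forces $\widetilde V_t=V_t$ for all $t\le \cR$, i.e.\ for all $t\ge 0$. The whole argument is essentially a one-line maximal-inequality bound; the only points requiring care are that the conditional maximal inequality is applied at the \emph{random} time $\cT_i$ (which genuinely uses that $V$ is a supermartingale, not merely that its conditional drift is non-positive, together with $V_{\cT_i}\le L_i$), and the bookkeeping that there are only $I=O(\log n/\log\log n)$ scales, so that a crude union bound costing a factor $I$ still beats $\lambda\approx \log n$.
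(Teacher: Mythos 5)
Your proof is correct and follows the paper's argument exactly: Ville's maximal inequality from $\cT_i$ (using $V_{\cT_i}\le L_i$), then a union bound over the $I=O(\log n/\log\log n)$ scales against the factor $\lambda=\log n/(\log\log n)^{1/2}$. The conditional-on-$\cF_{\cT_i}$ phrasing you use and the paper's $\P(\sup_{t\ge\cT_i}V_t\ge r)\le r^{-1}\mathbb E[V_{\cT_i}]$ are equivalent, and your explicit derivation of the ``in particular'' clause (that the good event forces $\cR_i=\infty$ for all $i$ and hence $\cR=\infty$) fills in a step the paper only states.
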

    
    \begin{proof}
By application of a standard maximal inequality for non-negative super-martingales, known sometimes as Ville's maximal inequality (see e.g.,~\cite[Eq.~(5.6)]{CLL-nabla-phi}), we have for each $i$, 
$$\P\Big(\sup_{t\ge \cT_i} V_t \ge r \Big)\le r^{-1} \mathbb E[V_{\cT_i} ] \le r^{-1} L_i\,,$$
for every $r$. In particular, for every $i$, 
$$\P\Big(\sup_{t\ge \cT_i}V_t \ge \frac{\log n}{(\log \log n)^{1/2}} L_i  \Big)\le \frac{(\log \log n)^{1/2}}{\log n}\,.$$
A union bound over all $i\le I$ (note that $I = o(\log n/(\log \log n)^{1/2})$, yields the desired.
    \end{proof}

    We next show that if the supermartingale accumulates a certain amount of quadratic variation when initialized in the interval $[L_{i+1}, L_{i}]$, then it must have left that interval, and with high probability it will exit downwards (i.e., at $L_{i+1}$). This follows from the following upper bound on the quadratic variation between the hitting times $\cT_i$ and $\cT_{i+1}$.

    \begin{lemma}\label{lem:angle-bracket-upper-bound}
    Let $\widetilde V_t$ be as in~\eqref{eq:tilde-A}, and let $\langle \widetilde V\rangle_t$ be as in Definition~\ref{def:supermartingale-angle-bracket}: we have, 
    \begin{align*}
        \mathbb P\Big(\langle \widetilde V\rangle_{\cT_{i+1}} - \langle \widetilde V\rangle_{\cT_i}  < 4 L_{i-1}^2 \mbox{ for all $0\le i \le I$}\Big)= 1- o(1)\,.
    \end{align*}
    \end{lemma}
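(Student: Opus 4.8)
\textbf{Proof plan for Lemma~\ref{lem:angle-bracket-upper-bound}.} The plan is to obtain an explicit formula for the angle bracket of $\widetilde V_t$ as a sum over update times of the conditional variance of the jump, bound each such conditional variance in terms of the discrepancies $dh_x$, and then argue that the total quadratic variation accumulated before the process leaves the scale window $[L_{i+1},L_{i-1}]$ is controlled by the square of the window's upper endpoint $L_{i-1}$, since the martingale part of $\widetilde V_t$ stays within an interval of length $O(L_{i-1})$ during $[\cT_i,\cT_{i+1})$.

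First I would write down the Doob decomposition of $\widetilde V_t$ explicitly. For the continuous-time jump process, $\langle \widetilde V\rangle_t = \int_0^t \mathcal{A}_s\, ds$ where $\mathcal{A}_s$ is the ``carré du champ'' — i.e., at rate $|\Lambda_n|$, an update occurs at a uniformly chosen site $x$, and the contribution to the angle bracket is $\var\!\big(\widetilde V_{s^+} - \widetilde V_{s^-} \mid \cF_{s^-}, \text{update at } x\big)$, averaged over $x\in\Lambda_n$. When $V=\widetilde V$ (which holds with probability $1-o(1)$ by Lemma~\ref{lem:V-t-localization}) and the update is at $x$, the jump of $V$ equals $dh_x(s^+) - dh_x(s^-)$, and $dh_x(s^+)$ is the difference of the two sticky-coupled Gaussian updates at $x$ with means $\Delta h^\bn(x,s)$ and $\Delta h^\bg(x,s)$; the relevant conditional variance is thus at most the second moment of the sticky-coupling discrepancy, which by the structure of Definition~\ref{def:Gaussian-sticky-coupling} is $O\big((\Delta h^\bn(x,s)-\Delta h^\bg(x,s))^2 \wedge 1\big) + (\Delta dh_x(s))^2$, hence $O(dh_x(s))$ uniformly (using that $dh_x \le 2C\log n$ by Lemma~\ref{lem:max-bound} and the fact that the sticky discrepancy is bounded by a constant times $\sqrt{\mu}\wedge\mu$ times something; more simply, the jump is bounded by $|dh_x(s^-)|+|dh_x(s^+)|$ which is $O(\log n)$ and its square is $O(\log n)\cdot dh_x$). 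Averaging over the uniform site gives $\mathcal{A}_s \le \frac{C\log n}{|\Lambda_n|}\sum_x dh_x(s) = \frac{C\log n}{|\Lambda_n|} V_s$.

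Next I would integrate this bound against time between $\cT_i$ and $\cT_{i+1}$. On the event of Lemma~\ref{lem:V-t-localization}, $V_s \le \frac{\log n}{(\log\log n)^{1/2}} L_i$ for all $s\ge \cT_i$, so naively $\langle\widetilde V\rangle_{\cT_{i+1}} - \langle\widetilde V\rangle_{\cT_i} \le \frac{C(\log n)^2}{|\Lambda_n|(\log\log n)^{1/2}} L_i \cdot(\cT_{i+1}-\cT_i)$, which is not yet good enough without a bound on the duration. The cleaner route — and the one I would actually pursue — is to use the supermartingale optional-stopping / martingale isometry directly: for the truncated process, $\widetilde V_{\cT_{i+1}}^2 - \langle\widetilde V\rangle_t$ restricted appropriately is a supermartingale, and since $\widetilde V$ stays in $[0, \frac{\log n}{(\log\log n)^{1/2}}L_i]$ on $[\cT_i,\cT_{i+1})$ and its jumps there are bounded by $L_{i-1}$ (Claim~\ref{clm:V-t-supermartingale}), the martingale part $M_t$ of $\widetilde V_t$ satisfies $\mathbb E[(M_{\cT_{i+1}} - M_{\cT_i})^2 \mid \cF_{\cT_i}] = \mathbb E[\langle\widetilde V\rangle_{\cT_{i+1}} - \langle\widetilde V\rangle_{\cT_i}\mid\cF_{\cT_i}]$, and $|M_{\cT_{i+1}} - M_{\cT_i}| \le |\widetilde V_{\cT_{i+1}} - \widetilde V_{\cT_i}| + |\text{drift}|$. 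Here $\widetilde V_{\cT_i} = L_i$, $\widetilde V_{\cT_{i+1}} \in \{L_{i+1}\} \cup (\text{overshoot region bounded by } L_{i-1})$ by definition of $\cR$ and~\eqref{eq:tilde-A}, and the accumulated drift is non-positive and bounded in magnitude by the total decrease, i.e.\ by $L_i + L_{i-1}$; so the increment of $M$ is at most $O(L_{i-1})$, giving $\mathbb E[\langle\widetilde V\rangle_{\cT_{i+1}} - \langle\widetilde V\rangle_{\cT_i}\mid\cF_{\cT_i}] \le C' L_{i-1}^2$. A union bound over the $I = O(\log n/\log\log n)$ scales, combined with a Markov/maximal inequality to upgrade the conditional-expectation bound to a uniform-in-$i$ high-probability statement (absorbing a polylogarithmic factor, which is harmless since $L_{i-1} = (\log n)\cdot L_i$ gives room, or by choosing the constant $4$ in the statement with appropriate slack), yields the claim.

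\textbf{Main obstacle.} The delicate point is making the per-jump conditional variance bound $\mathcal{A}_s = O\!\big(\tfrac{\log n}{|\Lambda_n|}V_s\big)$ rigorous while simultaneously handling the truncation: the jump of $\widetilde V$ is not exactly the jump of $V$ near the stopping boundary $\cR$, and one must verify that the truncation only decreases the angle bracket (which follows from $\widetilde V$ having jumps dominated by those of $V$, as noted in Claim~\ref{clm:V-t-supermartingale}) and that the contribution of the single boundary-crossing jump is itself $O(L_{i-1}^2)$. A secondary subtlety is that bounding the sticky-coupling jump variance requires knowing the gradients $\Delta h^\bn(x,s) - \Delta h^\bg(x,s)$ are not too large; this is where Lemma~\ref{lem:max-bound} enters, giving $\|d\bh(s)\|_\infty = O(\log n)$ throughout $[t_\star, n^3]$ with probability $1-o(1)$, so that the jump $|dh_x(s^+)-dh_x(s^-)| \le 4C\log n$ and its conditional second moment is $\le 4C\log n \cdot \mathbb E[|dh_x(s^+) - dh_x(s^-)|\mid\cdots] \le C''\log n\cdot dh_x(s^-)$ using that the sticky coupling produces a zero discrepancy unless it produces one of size $\asymp$ the gradient gap, together with monotonicity $dh_y\ge 0$. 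I expect assembling these estimates into the clean conditional bound $C'L_{i-1}^2$, and then the routine union bound over scales, to be the bulk of the remaining work.
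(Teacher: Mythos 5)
Your plan has a genuine gap at its central step. Writing the Doob decomposition $\widetilde V_t = M_t - A_t$ with $A_t$ the nondecreasing compensator, you claim the pathwise bound $|M_{\cT_{i+1}} - M_{\cT_i}| \le |\widetilde V_{\cT_{i+1}} - \widetilde V_{\cT_i}| + (A_{\cT_{i+1}} - A_{\cT_i}) = O(L_{i-1})$, arguing that ``the accumulated drift is bounded in magnitude by the total decrease.'' This is false: the process $\widetilde V$ can remain pinned in the window $[L_{i+1},L_{i-1}]$ for an arbitrarily long time while $M$ rises far and $A$ increases in tandem to compensate---the observed displacement of $\widetilde V$ bounds the \emph{difference} $(M_{\cT_{i+1}}-M_{\cT_i}) - (A_{\cT_{i+1}}-A_{\cT_i})$, not the compensator itself. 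Optional stopping gives only the first-moment control $\mathbb E[A_{\cT_{i+1}} - A_{\cT_i}\mid\cF_{\cT_i}] \le L_i - L_{i+1}$, which does \emph{not} yield the second-moment bound $\mathbb E[(M_{\cT_{i+1}}-M_{\cT_i})^2\mid\cF_{\cT_i}] = O(L_{i-1}^2)$ that your isometry step requires. So that route does not close.

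The paper sidesteps exactly this difficulty by invoking a dedicated diffusivity/maximal inequality for nonnegative supermartingales with bounded increments (\cite[Proposition 21]{CLL-mixing-simplex}, see also \cite{LL-WASEP}): applying it to the restarted process $\bar V^{(i)}_s := \widetilde V_{\cT_i + s}$ (a nonnegative supermartingale whose jumps are deterministically $\le L_{i-1}$ by Claim~\ref{clm:V-t-supermartingale}) yields the tail bound $\mathbb P(\langle\bar V^{(i)}\rangle_{\cT_{i+1}-\cT_i} \ge v) \le 8(L_i-L_{i+1})v^{-1/2}$ for all $v\ge 4L_{i-1}^2$. The $v^{-1/2}$ tail (not $v^{-1}$) is precisely what is needed: plugging $v = 4L_{i-1}^2 = 4(\log n)^2 L_i^2$ gives $O(1/\log n)$ per scale, which sums to $o(1)$ over the $I = O(\log n/\log\log n)$ scales. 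Note that even a correct expectation bound $O(L_{i-1}^2)$ followed by Markov at level $4L_{i-1}^2$ would give only $O(1)$ per scale and the union bound would fail, so a first-moment argument is fundamentally too weak here. As a side comment, the carr\'e-du-champ estimate in the first half of your plan (where, incidentally, the $1/|\Lambda_n|$ factor is spurious---each site rings at rate $1$, so the generator sums rather than averages over sites, cf.\ \eqref{eq:angle-bracket-growth}) is not used in the upper-bound lemma at all; estimates of that type are what the paper deploys for the \emph{lower} bound on the angle bracket in Proposition~\ref{prop:angle-bracket-lower-bound}.
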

    \begin{proof}
        For every fixed $i$, on the event of $\cT_{i}<\infty$, conditionally on $\cF_{\cT_i}$, we can consider the process $(\bar V_s^{(i)})_{s\ge 0}$ where 
        \begin{align*}
            \bar V_s^{(i)} = \widetilde V_{\cT_{i} + s}\,.
        \end{align*}
        By Claim~\ref{clm:V-t-supermartingale}, this is a supermartingale, and its increments are deterministically bounded by $L_{i-1} = n^{2} (\log n)^{-5 - (i-1)}$. 
        As such, by a diffusivity estimate on supermartingales (see~\cite[Proposition 21]{CLL-mixing-simplex} as well as~\cite{LL-WASEP}), for all $v\ge 4L_{i-1}^2$, 
        \begin{align*}
            \mathbb P(\langle \bar V^{(i)}\rangle_{\cT_{i+1} - \cT_{i}} \ge v ) \le 8(L_{i} - L_{i+1})v^{-1/2} \le 8n^{2}(\log n)^{-5 -i}v^{-1/2}\,.
        \end{align*}
        Plugging in $v = 4n^{4}(\log n)^{-10-2(i-1)}$, we see that the above is at most $O((\log n)^{-1})$ at which point a union bound over $i \le I$ completes the proof since $I = o(\log n)$.  
    \end{proof}

    The following counterpart lower bound on the accumulated quadratic variation between times $\cT_i$ and $\cT_{i+1}$ will be our main technical contribution in the remainder of this section; when combined with the above lemma, we get an upper bound of $2^{-i} n^2$ on $\cT_{i+1} - \cT_{i}$. 
    
    \begin{proposition}\label{prop:angle-bracket-lower-bound}
    There exists $\epsilon>0$ such that the following holds: 
    \begin{align*}
        \mathbb P\Big(\partial_t \langle \widetilde V\rangle_t \ge \frac{1}{(\log n)^2} V_t\,\, \mbox{ for all $\cT_0 \vee (T_0 + (\log n)^2) \le t\le \cT_I$}\Big) = 1-o(1)\,.
    \end{align*}
    \end{proposition}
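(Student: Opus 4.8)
The plan is to express $\partial_t\langle \widetilde V\rangle_t$ as a sum over sites of conditional second moments of the jumps of $V_t$, to bound each summand below by the conditional variance of the updated discrepancy at that site, and then to sum. Recall that $V_t$ changes only at a clock ring: if the rate‑$1$ clock at $x\in\Lambda_n$ rings at time $t$, then $V_t$ jumps by $J_x := dh_x^+ - dh_x(t^-)$, where $dh_x^+$ is the post‑update value of $dh_x$. Hence the Doob decomposition of the jump supermartingale $V_t$ (Claim~\ref{clm:V-t-supermartingale}) satisfies
\[
\partial_t\langle V\rangle_t \;=\; \sum_{x\in\Lambda_n}\mathbb E\big[J_x^2 \,\big|\, \cF_t\big]\,.
\]
On the event $\{\widetilde V_t = V_t\text{ for all }t\}$, which holds with probability $1-o(1)$ by Lemma~\ref{lem:V-t-localization}, we have $\langle\widetilde V\rangle=\langle V\rangle$, so we work on it henceforth. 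Since $dh_x(t^-)$ is $\cF_t$‑measurable, $\mathbb E[J_x^2\,|\,\cF_t]\ge\var(dh_x^+\,|\,\cF_t)$, so it suffices to prove the per‑site bound
\[
\var\big(dh_x^+ \,\big|\, \cF_t\big)\;\ge\; c\,\big(\mu_x(t)\wedge 1\big)\,,\qquad \mu_x(t):=\tfrac18\sum_{y\sim x}dh_y(t)\,,
\]
for a universal $c>0$, and then to sum it.

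For the per‑site bound I would use the structure of the sticky coupling. An update at $x$ draws the two Gaussian values with means $a=\tfrac14\sum_{y\sim x}h^{\bg}(y,t)$ and $b=\tfrac14\sum_{y\sim x}h^{\bn}(y,t)$, so $b-a = 2\mu_x(t)\ge 0$ by monotonicity, coupled via $\mathsf{P}_{\mu_x(t)}$ of Definitions~\ref{def:Gaussian-sticky-coupling}--\ref{def:DGFF-sticky-coupling}. Thus $dh_x^+=0$ with probability $1-\erf(\mu_x(t)/\sqrt2)$, and otherwise equals $2Y$ with $Y\sim\bar\nu_1^{\mu_x(t)}$; by Lemma~\ref{lem:domination-of-bernoullis}, $Y\succeq\zeta\cdot\ber(\zeta)$, so $dh_x^+\ge 2\zeta$ with conditional probability at least $\zeta\,\erf(\mu_x(t)/\sqrt2)$. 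Using $\erf(u/\sqrt2)\ge c_0(u\wedge 1)$ this gives $\mathbb E[(dh_x^+)^2\,|\,\cF_t]\ge 4\zeta^3 c_0\,(\mu_x(t)\wedge 1)$, and since $\mathbb E[dh_x^+\,|\,\cF_t]=b-a=2\mu_x(t)$, subtracting the squared mean yields $\var(dh_x^+\,|\,\cF_t)\ge c_1\mu_x(t)$ for all $\mu_x(t)$ below a small universal threshold. For $\mu_x(t)$ bounded below one argues directly that $dh_x^+$ either places non‑negligible mass both at $0$ and above $2\zeta$, or, for large $\mu_x(t)$, is (up to the negligible atom at $0$) approximately $\mathcal N(2\mu_x(t),\Theta(1))$, so $\var(dh_x^+\,|\,\cF_t)\ge c_2>0$. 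Taking the worse of the two regimes gives the claimed inequality.

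Summing over $x$ and invoking Lemma~\ref{lem:max-bound}, which applies for all $t$ in the stated range — we are past the identity‑coupling phase and its burn‑in, so $\|d\bh(t)\|_\infty\le 2C\log n$ and hence $\mu_x(t)\le C\log n$ — we may replace $\mu_x(t)\wedge 1$ by $\mu_x(t)/(C'\log n)$ for every $x$, so that
\[
\partial_t\langle\widetilde V\rangle_t\;\ge\; c\sum_{x\in\Lambda_n}\big(\mu_x(t)\wedge 1\big)\;\ge\;\frac{c}{C'\log n}\sum_{x\in\Lambda_n}\mu_x(t)\;=\;\frac{c}{8C'\log n}\sum_{x\in\Lambda_n}\sum_{y\sim x}dh_y(t)\,.
\]
In the last double sum every $y\in\Lambda_n$ is counted once for each of its neighbours inside $\Lambda_n$, which is at least two (more than two except at the corners), while $dh_y\equiv 0$ on $\partial\Lambda_n$; hence the double sum is at least $2V_t$, and $\partial_t\langle\widetilde V\rangle_t\ge\frac{c}{4C'\log n}V_t\ge\frac{1}{(\log n)^2}V_t$ for $n$ large. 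A union bound over the two high‑probability events invoked, from Lemmas~\ref{lem:V-t-localization} and~\ref{lem:max-bound}, finishes the proof, noting that the overall conclusion of this stage will give $\cT_I=t_\star+O(n^2\log\log n)\ll n^3$, so that Lemma~\ref{lem:max-bound} is indeed in force on $[t_\star,\cT_I]$.

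The one genuinely delicate point — and where the argument must depart from the one‑dimensional treatments of~\cite{CLL-mixing-simplex,CLL-nabla-phi} — is the step that converts $\sum_x\var(dh_x^+\,|\,\cF_t)$ into a quantity proportional to the $\ell^1$‑norm $V_t$. In $d=1$ one bounds $\sum_x dh_x^2\ge V_t^2/|\Lambda_n|$ by Cauchy--Schwarz, but the factor $1/|\Lambda_n|=\Theta(n^{-2})$ would here force $\cT_{i+1}-\cT_i=O(n^2\log n)$ and destroy cutoff (cf.\ Remark~\ref{rem:quadratic-variation-bound-difficulty}). The two features that rescue the argument are that the sticky coupling produces order‑one discrepancies on a successful separation (Lemma~\ref{lem:domination-of-bernoullis}), so each site contributes $\mu_x\wedge 1$ rather than $\mu_x^2$, and that the $\ell^\infty$ bound of Lemma~\ref{lem:max-bound} caps $\mu_x$ at $O(\log n)$, so that truncating at $1$ costs only a $\log n$ factor rather than a factor $|\Lambda_n|$. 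I therefore expect the main technical work to be in establishing the per‑site variance bound uniformly over the entire range $\mu_x\in(0,O(\log n)]$ — in particular the intermediate regime where neither the atom of $dh_x^+$ at $0$ nor its Gaussian approximation is by itself quantitatively decisive — with the reduction from $\widetilde V$ to $V$ being a routine consequence of Lemma~\ref{lem:V-t-localization}.
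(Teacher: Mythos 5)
Your proof is correct and reaches the conclusion, but it takes a genuinely different and arguably cleaner route than the paper's. The paper computes the angle bracket via the decomposition
\[
\partial_t\langle\widetilde V\rangle_t=\sum_x\bigl[(1-q_{x,t})(dh_x(t^-))^2+q_{x,t}\,\mathbb E[Z_{x,t}^2\mid\cF_{t^-}]\bigr]
\]
and lower-bounds it by $\epsilon\sum_x\bigl[(dh_x(t^-))^2\wedge 1\bigr]$, and then must invoke Lemma~\ref{lem:two-norm-to-one-norm}, and through it Lemma~\ref{lem:two-norm-to-0-norm}, a high-probability concentration argument (stochastic domination of most-recently-updated discrepancies by $\zeta\cdot\ber(\zeta)$ after a burn-in) to convert the truncated $\ell^2$ quantity into $\frac{c}{\log n}V_t$. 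You instead pass directly to $\partial_t\langle\widetilde V\rangle_t\ge\sum_x\var(dh_x^+\mid\cF_{t^-})$ and prove a \emph{deterministic, per-site} bound $\var(dh_x^+\mid\cF_{t^-})\ge c(\mu_x(t)\wedge 1)$ on the variance of the sticky-coupled update, valid uniformly in $\mu_x$ (your small-$\mu_x$ argument via Lemma~\ref{lem:domination-of-bernoullis} and the subtraction of $(2\mu_x)^2$ is sound, and one can verify that $\var(dh_x^+)$ is in fact monotone in $\mu_x$ by a Mills-ratio computation, which covers the intermediate regime you flag). Since $\sum_x\mu_x\ge\frac14 V_t$ holds as an exact counting identity (every $y\in\Lambda_n$ has at least two $\Lambda_n$-neighbours), the conversion to $V_t$ requires only the $\ell^\infty$ bound of Lemma~\ref{lem:max-bound}, and the concentration lemma is bypassed entirely. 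This is a genuine simplification: your argument needs only Lemma~\ref{lem:V-t-localization} and Lemma~\ref{lem:max-bound}, whereas the paper stacks a third high-probability event on top.

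Two minor imprecisions. First, your assertion that $\langle\widetilde V\rangle=\langle V\rangle$ on the event $\{\widetilde V_t=V_t\text{ for all }t\}$ is not literally valid as stated, since an angle bracket is a predictable process and cannot be compared eventwise; the correct statement, which is also what the paper uses, is that on the localization event $\cC$ of Lemma~\ref{lem:V-t-localization} the cap $L_{i-1}-\widetilde V_{t^-}\ge L_{i-1}/2\gg\log n$ dwarfs any realistic jump, so that truncating the jump in the predictable compensator changes $\mathbb E[(\Delta\widetilde V)^2\mid\cF_{t^-}]$ by only an exponentially small amount; the bound you need is a \emph{lower} bound on the truncated second moment, which therefore goes through. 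Second, your bound is actually $\gtrsim V_t/\log n$, one power of $\log n$ better than what the proposition requires, which is fine but suggests some slack in the statement; this matches the paper, whose route also yields $V_t/(C\log n)$.
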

    (At first glance, the lower bound on $t$ by $T_0 + (\log n)^2$ may seem bizarre---indeed it is technical and only there to ensure that every site has been updated at least once under the sticky coupling phase of Definition~\ref{def:two-stage-coupling}.)
    
    \begin{remark}\label{rem:quadratic-variation-bound-difficulty}
    In~\cite{CLL-nabla-phi}, a similar overall strategy was used in dimension $d=1$ to get cutoff for general $\nabla \phi$ interfaces, including the natural 1D counterpart of the DGFF. In the lower bound used in~\cite{CLL-nabla-phi}, the time-derivative of the angle bracket is bounded by the minimum of $V_t/\|d\bh(t)\|_\infty$ and $V_t^2/|\Lambda_n|$, corresponding to cases where either most of the contribution to $V_t$ is from discrepancies that are $\Omega(1)$, or from discrepancies that are $o(1)$ respectively.   
    In our 2D setting, the bound translates to $$\partial _t \langle \widetilde V\rangle_t \ge \min\Big\{\frac{V_t}{(\log n)^2},\frac{V_t^2}{n^2}\Big\}\,.$$ If the second quantity in the minimum is the smaller one, then, the upper bound we would get on $\cT_{i+1}-\cT_i$ would itself be of order $n^2 (\log n)^2$, and a similar issue would persist no matter our choices of scales $(L_i)_i$. As such, we must show that over time, the discrepancy process always maintains a property that the dominant contribution to $V_t$ is discrepancies that are $\Omega(1)$, or in other words that $V_t$ is comparable to the number of non-zero discrepancies. In that case we could use the first term in the minimum above as the lower bound on the angle bracket's time derivative. This is where the special ``zero-one" nature of the sticky coupling will be leveraged.
    \end{remark}
    
    \begin{remark}\label{rem:higher-d-sticky-coupling}
    Let us consider the same calculation for the sticky coupling for the DGFF on $\mathbb Z^d$ in $d\ge 3$. We consider two stages of analysis as above, first using the exponential decay of the expectation of $V_t$ to a time $t_\star(d)$ to be optimized, then leveraging the quadratic variation of $V_t$ to coalesce in a further $o(n^2 \log n)$ time. In the second stage of this, disregarding polylogarithmic factors, by~\eqref{eq:angle-bracket-growth} the quadratic variation accumulated by $\widetilde V_t$ would be at most $(t-t_\star) V_{t_\star}$, while the quadratic variation it needs to accumulate for there to be a decent chance of coalescence by time $t$ would now be at least $(V_{t_\star})^2$. In order for $t-t_\star = \widetilde O(n^2)$ and this strategy to work, $t_\star$ would need to be such that the expected value of $V_{t_\star}\le \widetilde O(n^2)$, i.e., $t_\star(d) \ge \frac{2d-2}{\pi^2} n^2 \log n$. This should be compared to the expected cutoff location of $T_d=\frac{d}{\pi^2}n^2 \log n$, corresponding to $V_{T_d}$ being of order $n^{1+\frac{d}{2}}$: that $T_d$ is the expected location, is seen by generalizing the lower bound argument to $d\ge 3$.
    \end{remark}
    
    Before getting into further details of the proof of Proposition~\ref{prop:angle-bracket-lower-bound}, let us conclude the proof of Proposition~\ref{prop:V-t-down-to-polylog-scales} by combining Lemma~\ref{lem:angle-bracket-upper-bound} with Proposition~\ref{prop:angle-bracket-lower-bound}. 
    
    \begin{proof}[\textbf\emph{Proof of Proposition~\ref{prop:V-t-down-to-polylog-scales}}]
        First of all, by Lemma~\ref{lem:stage-1}, if we recall the value $T_0  = t_\star + \frac{12}{\pi^2} n^2 \log \log n$, we have $\mathbb P(\cT_0 \le T_0) = 1-o(1)$. 
        Working on this event, our main claim is that the pair Lemma~\ref{lem:angle-bracket-upper-bound} and Proposition~\ref{prop:angle-bracket-lower-bound} imply that
     \begin{align}\label{eq:T-i-minus-T-i-minus-1}
            \mathbb P \Big(\cT_{i+1}- \cT_i \le 2^{-i} n^2 \mbox{ for all $i\le I$}\Big) = 1-o(1)\,.
        \end{align}
        This will of course imply the desired by summing out over $i$, to find that $\cT_I - \cT_{0} = O(n^2)$. In order to prove the claimed bound of~\eqref{eq:T-i-minus-T-i-minus-1}, notice that while $\cT_i \le t \le \cT_{i+1}$, we have $V_t \ge L_{i+1}$, so that by Proposition~\ref{prop:angle-bracket-lower-bound}, 
        \begin{align*}
            \langle \widetilde V\rangle_{\cT_{i}+s} - \langle \widetilde V\rangle_{\cT_{i}} \ge \frac{ (s - (\log n)^2) L_{i+1}}{(\log n)^2} = (s-(\log n)^2)  n^2 (\log n)^{-8-i}\,.
        \end{align*}
        At the same time, by Lemma~\ref{lem:angle-bracket-upper-bound}, with probability $1-o(1)$, for all $0\le i \le I$,
        \begin{align*}
            \langle \widetilde V\rangle_{\cT_{i+1}} - \langle \widetilde V\rangle_{\cT_i}\le 4L_{i-1}^2= 4 n^4 (\log n)^{-8-2i}\,.
        \end{align*}
        Now taking $s \ge 4 n^2 (\log n)^{-i} + (\log n)^2$, we see that the former lower bound would be larger than the latter upper bound, implying in particular that $\cT_{i+1}  - \cT_i$ must be at most $4 n^2 (\log n)^{-i} + (\log n)^2$. In particular, we obtain~\eqref{eq:T-i-minus-T-i-minus-1} (with some room to spare). Summing that up over $0\le i\le I$, we get 
        \begin{align*}
            \mathbb P \Big(\cT_I - \cT_0 \le 8 n^2\Big) = 1-o(1)\,.
        \end{align*}
        We then get the requisite bound of Proposition~\ref{prop:V-t-down-to-polylog-scales} by means of the definition of $\cT_I$. 
    \end{proof}
    
    \subsection{Establishing Proposition~\ref{prop:angle-bracket-lower-bound}}
   It remains for us to establish the lower bound on the angle bracket of $\widetilde V_t$ which is the goal of this section. As already indicated, this will be done using crucially the ``zero-one" nature of discrepancies under the sticky coupling, which we formalize next. 
    In what follows, let 
    \begin{align}\label{eq:def-At-Nt}
        \cA_t = \{x\in \Lambda_n: dh_x(t) \ne 0\}\,, \qquad \mbox{and let} \qquad \cN_t = |\cA_t| = \sum_{x\in \Lambda_n} \mathbf 1\{dh_x(t)\ne 0\}\,.
    \end{align}
    
    The key estimate we rely on is the following lemma capturing the fact that for each $x\in \Lambda_n$, on the event of disagreement, the discrepancy $dh_x(t)$ is typically an order one random variable. It will apply for all times after $T_0+ (\log n)^2$, at which time by a coupon collecting argument, all sites have been updated at least once under the sticky coupling. 
    
    \begin{lemma}\label{lem:two-norm-to-0-norm}
    There exists a universal constant $c>0$ such that the following holds. Fix any time $t\ge T_0 + (\log n)^2$. Suppose $\cP$ is such that every site is updated in the window $[t_0, t]$ for $t_0 = t- (\log n)^2$. Then for all $r$, 
    \begin{align*}
        \mathbb P \Big(\sum_{x\in \Lambda_n} [(dh_x(t))^2 \wedge 1] \le c \cN_t\, ,\, \cN_t \ge r \given \cF_{t_0}, \cP\Big) = \exp ( - r/C)\,.
    \end{align*}
    \end{lemma}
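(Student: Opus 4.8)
The plan is to exploit the ``zero--one'' nature of the sticky coupling from Lemma~\ref{lem:domination-of-bernoullis}: whenever a site is updated under the sticky coupling and the two chains fail to coalesce there, the resulting discrepancy is order one, and since a site's discrepancy is frozen between consecutive updates of that site, this forces the clipped $\ell^2$ mass $\sum_x[(dh_x(t))^2\wedge 1]$ to be comparable to $\cN_t$ up to binomial fluctuations. Condition throughout on $\cF_{t_0}$ and on $\cP$. Then the update events in the window $(t_0,t]$ are deterministic, say at sites $x_1,\dots,x_m$ and increasing times $s_1<\dots<s_m$, and the only randomness left is in the sticky-coupled Gaussians driving these updates. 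For each $x\in\Lambda_n$ let $j(x)$ be the index of the \emph{last} update of $x$ in $(t_0,t]$; this is well defined precisely because $\cP$ updates every site in $[t_0,t]$. Writing $\mathcal J=\{j(x):x\in\Lambda_n\}$ (so $|\mathcal J|=|\Lambda_n|$) and noting that $x$ is not updated in $(s_{j(x)},t]$, the discrepancy is frozen, $dh_x(t)=dh_{x_{j(x)}}(s_{j(x)})$. Hence $\cN_t=\sum_{j\in\mathcal J}\one\{dh_{x_j}(s_j)\neq 0\}$ and $\sum_{x\in\Lambda_n}[(dh_x(t))^2\wedge 1]=\sum_{j\in\mathcal J}[(dh_{x_j}(s_j))^2\wedge 1]$, so the whole statement reduces to an estimate about the last-update increments.

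Next comes the per-update estimate. Fix $j$ and let $\cF_{s_j^-}$ be the history strictly before the $j$-th update. By monotonicity of the coupling, $\mu_j:=\tfrac18\sum_{y\sim x_j}dh_y(s_j^-)\ge 0$ is $\cF_{s_j^-}$-measurable, and by Definition~\ref{def:DGFF-sticky-coupling} together with the description of $\mathsf P_{\mu_j}$, the updated discrepancy $dh_{x_j}(s_j)$ equals $0$ with probability $1-\erf(\mu_j/\sqrt 2)$, while conditionally on $\{dh_{x_j}(s_j)\neq 0\}$ it equals $2Y_j$ with $Y_j$ distributed as $\bar\nu_1^{\mu_j}$ (the measure in~\eqref{eq:nu-2-3} normalised to a probability measure). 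Lemma~\ref{lem:domination-of-bernoullis} then gives, for the universal constant $\zeta\in(0,1]$ appearing there, $\P(dh_{x_j}(s_j)\ge 2\zeta\mid \cF_{s_j^-},\ dh_{x_j}(s_j)\neq 0)=\P(Y_j\ge\zeta)\ge\zeta$. Setting $c_0:=(2\zeta)^2\wedge 1>0$ and $D_j:=\one\{dh_{x_j}(s_j)\ge 2\zeta\}$, $W_j:=\one\{dh_{x_j}(s_j)\neq 0\}$, this yields $D_j\le W_j$, $\cN_t=\sum_{j\in\mathcal J}W_j$, $\sum_{x}[(dh_x(t))^2\wedge 1]\ge c_0\sum_{j\in\mathcal J}D_j$, and, crucially, $\P(D_j=1\mid\cF_{s_j^-},W_j)\ge \zeta W_j$.

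For the concentration step, process the updates $j=1,\dots,m$ in order and enlarge the probability space by i.i.d.\ $\mathrm{Ber}(\zeta)$ variables $(\xi_j)_{j\in\mathcal J}$ independent of the dynamics; since the conditional law of $D_j$ given $\{W_j=1\}$ and $\cF_{s_j^-}$ is a Bernoulli with parameter $\ge\zeta$, a monotone coupling with $\xi_j$ gives $D_j\ge W_j\xi_j$ for all $j\in\mathcal J$, hence $\sum_{j\in\mathcal J}D_j\ge \sum_{j\in\mathcal J}W_j\xi_j$. Conditionally on $(W_j)_{j\in\mathcal J}$ the right-hand side is a $\bin(\cN_t,\zeta)$ variable, so a Chernoff bound gives $\P(\sum_{j\in\mathcal J}D_j\le\tfrac\zeta2\cN_t,\ \cN_t\ge r\mid\cF_{t_0},\cP)\le e^{-\zeta r/8}$. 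Taking $c:=c_0\zeta/2$ and $C:=8/\zeta$ finishes the argument, since $\sum_x[(dh_x(t))^2\wedge 1]\le c\cN_t$ forces $c_0\sum_{j\in\mathcal J}D_j\le c\cN_t$, i.e.\ $\sum_{j\in\mathcal J}D_j\le\tfrac\zeta2\cN_t$; the claimed bound is the $\le$ direction of the displayed equality, which is the meaningful one.

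The only delicate point is the bookkeeping in the last step. The indicators $W_j$ — equivalently, the random set $\cA_t$ — are correlated through the entire DGFF evolution in a way that is not easy to control directly; what saves the day is that each success-given-disagreement event $\{D_j=1\}\subseteq\{W_j=1\}$ has conditional probability uniformly at least $\zeta$ given the past, which is exactly Lemma~\ref{lem:domination-of-bernoullis}, and this ``uniform in the past'' feature is what lets one couple to the auxiliary i.i.d.\ $\xi_j$'s (equivalently, run a martingale/Chernoff argument along the filtration of the last updates). Everything else — the conditioning on $\cP$ making the update schedule deterministic, and the freezing of a site's discrepancy between its consecutive updates — is routine.
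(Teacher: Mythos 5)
Your proof is conceptually the same as the paper's: condition on $\cF_{t_0}$ and the update schedule $\cP$, note that each site's discrepancy at time $t$ is determined by its last update in $(t_0,t]$, use the sticky coupling's ``zero--one'' structure via Lemma~\ref{lem:domination-of-bernoullis} to extract a $\zeta\cdot\mathrm{Ber}(\zeta)$ lower bound on each failed update, and then concentrate. The per-update estimate (the $\erf$ probability of failure and the stochastic domination of the discrepancy given failure) is correct and matches the paper up to an immaterial change of notation in $\mu_j$.

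There is, however, one genuine imprecision in the final concentration step. You assert that ``conditionally on $(W_j)_{j\in\cJ}$ the right-hand side is a $\operatorname{Bin}(\cN_t,\zeta)$ variable.'' This is not true in general: $\xi_j$ is, by construction, coupled to the realized discrepancy $Y_j$ (so that $D_j\ge W_j\xi_j$), and $Y_j$ feeds back into the dynamics, hence into $W_k$ for later $k$. Conditioning on the full vector $(W_j)_j$ therefore biases the $\xi_j$'s. Concretely, if $W_k$ happens to be a deterministic function of $(W_{k-1},\xi_{k-1})$, conditioning on $W_{k-1}=W_k=1$ can force $\xi_{k-1}=1$, so the conditional law of $\sum_j W_j\xi_j$ given $(W_j)_j$ need not be Binomial. (Also, as written, ``$\xi_j$ independent of the dynamics'' and ``$D_j\ge W_j\xi_j$ almost surely'' are not simultaneously achievable, since $D_j$ is part of the dynamics; what you can achieve is that $\xi_j$, generated at step $j$, is $\mathrm{Ber}(\zeta)$ and independent of the past and of $W_j$, but not of the future $W_k$'s.)

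The gap is easily patched, and the patch is exactly what the paper does. Relabel along the failure times: let $\tau_\ell$ be the index of the $\ell$-th update with $W_{\tau_\ell}=1$, and set $\widetilde\chi_\ell=\xi_{\tau_\ell}$ for $\ell\le\cN_t$, padding with fresh independent $\mathrm{Ber}(\zeta)$'s for $\ell>\cN_t$. Because $\xi_{\tau_\ell}$ is drawn afresh at a stopping time, independently of $\cG_{\tau_\ell-1}$ and of $W_{\tau_\ell}$, the sequence $(\widetilde\chi_\ell)_{\ell\ge 1}$ is genuinely i.i.d.\ $\mathrm{Ber}(\zeta)$ (not just ``Bernoulli conditional on the past''), with $\sum_j W_j\xi_j=\sum_{\ell\le\cN_t}\widetilde\chi_\ell$. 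One then bounds
\begin{align*}
\mathbb{P}\Bigl(\sum_{\ell\le\cN_t}\widetilde\chi_\ell\le\tfrac{\zeta}{2}\cN_t,\ \cN_t\ge r\Bigr)
\le \sum_{N=r}^{|\Lambda_n|}\mathbb{P}\Bigl(\sum_{\ell\le N}\widetilde\chi_\ell\le\tfrac{\zeta}{2}N\Bigr)\le \sum_{N\ge r}e^{-N/C}\le C'e^{-r/C},
\end{align*}
which sidesteps any conditioning on $(W_j)_j$. Your parenthetical ``equivalently, run a martingale/Chernoff argument along the filtration'' gestures at the same fix, but naive Azuma with worst-case increments $O(1)$ over all $m=|\{\text{updates in }(t_0,t]\}|$ would give $e^{-cr^2/m}$, which is not $e^{-r/C}$ for the relevant $r=(\log n)^2$; one needs either the stopping-time relabeling above or a Freedman-type bound keying on the predictable variation $\lesssim\cN_t$. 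With that one change, the argument is correct and coincides with the paper's.
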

    \begin{proof}
        Condition on $\cF_{t_0}$, as well as the entire update sequence (but not the Gaussian random variables) in the time interval $[t_0, t]$, and suppose that $\cP([t_0,t])$ is such that every site in $\Lambda_n$ is updated at least once. 
        
        We claim that conditionally on this, there exists a coupling between $(dh_x(t))_x$ and an i.i.d.\ sequence $(\chi_i)_{i} \sim \zeta\cdot \ber(\zeta)$ for $\zeta$ given by Lemma~\ref{lem:domination-of-bernoullis} such that the following holds. Recall $\cA_t$ from~\eqref{eq:def-At-Nt}, and let $(y_i)_i$ be the vertices of $\cA_t$ enumerated according to the times $t_i \in [t_0, t]$ of their last (i.e., most recent) update so that $t_i<t_{i+1}$ for every $i$; then with probability one
        \begin{align}\label{eq:claimed-coupling-property}
            dh_{y_i}(t) \ge \chi_i \qquad \mbox{for all $1\le i\le \cN_t$}\,.
        \end{align}
        Let us conclude the proof given such a coupling, before proving the existence of the claimed coupling. Given \eqref{eq:claimed-coupling-property}, we in particular have that 
        \begin{align*}
            \sum_{x\in \Lambda_n} [(dh_{x}(t))^2 \wedge 1]  = \sum_{y_i \in \cA_t} [(dh_{y_i}(t))^2 \wedge 1] \ge \sum_{1\le i\le \cN_t} \chi_i^2\,,
        \end{align*}
        where the truncation by $1$ didn't affect anything because $\chi_i \le 1$ deterministically.
        Therefore, under this coupling, we have 
        \begin{align*}
            \mathbb P \Big(\sum_{x\in \Lambda_n} [(dh_x(t))^2\wedge 1] \le c \cN_t\, ,\, \cN_t \ge r \mid \cF_{t_0}, \cP\Big) &  \le \mathbb P\Big(\sum_{i\le \cN_t} \chi_i^2\le c \cN_t\,,\, \cN_t \ge r\mid \cF_{t_0}, \cP\Big) \\
            & \le \mathbb P\Big(\sum_{i\le N} \chi_i^2 \le cN \mbox{ for some $r \le N \le |\Lambda_n|$}\Big)\,.
        \end{align*}
        This inequality then concludes the proof by application of the following  concentration fact, whose proof is standard and follows from Hoeffding's inequality together with a union bound. 
            \begin{fact}
    Suppose $(\chi_i)_{i\ge 1}$ are a sequence of $\zeta \cdot \mbox{Ber}(\zeta)$ random variables, for $\zeta>0$. Then, for every $r\ge 0$, 
    \begin{align*}
        \mathbb P\Big(\sum_{1\le i\le N} \chi_i \le  \frac{\zeta^2 N}{2} \mbox{ for some $r \le N \le n^{2}$}\Big) \ge 1-e^{ - r/C}\,.
    \end{align*}
    \end{fact}
        
        It remains to construct the claimed coupling attaining~\eqref{eq:claimed-coupling-property}. Per the filtration $\cF_{t_0}$, generate the pair of configurations $(\bh^\bn,\bh^\bg)$ at time $t_0$. Reveal the Poisson update sequence on the interval $[t_0, t]$, and let $(x_i)_i$ be an enumeration of all vertices of $\Lambda_n$ according to the times $s_i \in [t_0,t]$ of their last update. We construct the claimed coupling iteratively, reasoning that for all $i$, either $dh_{x_i}(s_i) = 0$ or otherwise $x_i = y_\ell$ for
        $$\ell = \ell(i) = |\{j<i: dh_{x_j}(s_j)\ne 0\}| + 1\,,\qquad \mbox{and}\qquad dh_{y_\ell}(s_i) \ge \chi_\ell\,.$$ 
        
        Establishing this is sufficient because the vertex $x_i$ is not updated again between time $s_i$ and time $t$. Now fix any $i$ and condition on the filtration $\cF_{s_{i}^-}$; the random variable $dh_{x_{i}}(s_{i})$ is then drawn as follows. First, draw a 
        $$
        \ber\Big(\erf\Big(\frac{\mu_{i}}{2\sqrt{2}}\Big)\Big) \qquad \mbox{where} \qquad \mu_{i} = \frac{1}{4}\sum_{z\sim x_i}dh_z(s_{i}^-)
        $$
        which dictates whether or not $dh_{x_i}(s_i)=0$ because the $\erf(\mu_{i}/(2\sqrt{2}))$ quantity above is exactly the total-variation distance between the Gaussians being drawn at $x_i$. If this Bernoulli is zero, then of course the claim holds; else, conditionally on the fact that the coupling of $\bh^\bn$ and $\bh^\bg$ at $x_i$ failed, then necessarily $x_i = y_{\ell(i)}$ and we set $dh_{y_\ell}(s_i)=2Y_{\ell}$ for $Y_{\ell}\sim \bar\nu_1^{\mu_{i}}$ defined per Definition~\ref{def:Gaussian-sticky-coupling}.

         By Lemma~\ref{lem:domination-of-bernoullis}, uniformly over $\mu_{i}$, the random variable $Y_\ell$ stochastically dominates $\zeta \ber(\zeta)$, so there is a coupling, which we use, of $(Y_{\ell},\chi_{\ell})$ such that with probability one, $Y_{\ell}\ge \chi_{\ell}$, concluding the proof.
    \end{proof}
    
    Lemma~\ref{lem:two-norm-to-0-norm} gives us a lower bound on the $\ell^2$-norm of the discrepancy process $d\bh(t)$ while $t\le \cT_I$. It will turn out that the time derivative of the angle bracket of $\widetilde V_t$ is comparable to the $\ell^2$-norm of the discrepancy process, so in order to get Proposition~\ref{prop:angle-bracket-lower-bound}, we need to improve Lemma~\ref{lem:two-norm-to-0-norm} to a comparison between the $\ell^2$-norm of the discrepancies, and the $\ell^1$ norm, namely $V_t$ itself. This is achieved in the following lemma.  
    
    \begin{lemma}\label{lem:two-norm-to-one-norm}
    There exists $C>0$ such that, if we define 
                    \begin{align*}
            \cD = \Big\{\sum_{x\in \Lambda_n} [(dh_x(t))^2 \wedge 1] \ge \frac{1}{C\log n} \sum_{x\in \Lambda_n} dh_x(t) \,\,\,\mbox{ for all $T_0 + (\log n)^2 \le t\le n^3 \wedge \cT_I$}\Big\}\,,
        \end{align*}
then we have $\mathbb P(\cD) = 1-o(1)$. 
    \end{lemma}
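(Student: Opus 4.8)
The plan is to derive Lemma~\ref{lem:two-norm-to-one-norm} by upgrading the ``zero--one'' estimate of Lemma~\ref{lem:two-norm-to-0-norm}---which compares $\sum_x[(dh_x(t))^2\wedge 1]$ to the number of disagreeing sites $\cN_t$---to a comparison with $V_t=\sum_x dh_x(t)$, using the $\ell^\infty$ bound of Lemma~\ref{lem:max-bound} together with the elementary fact that on the relevant time interval $V_t$ is forced to be at least polylogarithmically large, which in turn forces $\cN_t$ to be large. First I would fix the constant in the statement: let $C_{\max}$ be such that Lemma~\ref{lem:max-bound} gives $\|d\bh(t)\|_\infty\le 2C_{\max}\log n$ for all $t_\star\le t\le n^3$ with probability $1-o(1)$, let $c$ be the universal constant of Lemma~\ref{lem:two-norm-to-0-norm}, and set $C=2C_{\max}/c$. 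With this choice, on the $\ell^\infty$ event one has, for every $t$ in that range,
\[
\frac{1}{C\log n}\,V_t=\frac{1}{C\log n}\sum_{x\in\Lambda_n}dh_x(t)\le\frac{\|d\bh(t)\|_\infty}{C\log n}\,\cN_t\le c\,\cN_t\,,
\]
so it suffices to prove that, with probability $1-o(1)$, $\sum_x[(dh_x(t))^2\wedge 1]\ge c\,\cN_t$ for all $T_0+(\log n)^2\le t\le n^3\wedge\cT_I$.

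Next I would record the deterministic input that $\cN_t$ is large on this interval. If $\cT_I=T_0$ the interval is empty and there is nothing to prove, so assume $\cT_I>T_0$; then $\cT_I$ is an update time, since $V_\cdot$ changes only at updates and $V_{\cT_I^-}>L_I$. By the definition of $I$ in~\eqref{eq:cT-i}, $L_I=L_{I-1}/\log n>(\log n)^{4}$ because $L_{I-1}>(\log n)^5$; for $t<\cT_I$ we have $V_t>L_I$ by definition of $\cT_I$, while at $t=\cT_I$ the downward jump of $V_t$ is at most $\|d\bh(\cT_I^-)\|_\infty\le 2C_{\max}\log n$ on the $\ell^\infty$ event (as $dh_x(\cT_I)\ge 0$ under the monotone coupling), so $V_{\cT_I}>L_I-2C_{\max}\log n>\tfrac12(\log n)^4$ for $n$ large. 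Dividing by $\|d\bh(t)\|_\infty$ gives $\cN_t\ge V_t/\|d\bh(t)\|_\infty\ge (\log n)^3/(4C_{\max})=:r_n$ throughout $[T_0+(\log n)^2,\,n^3\wedge\cT_I]$. Hence the side condition $\cN_t\ge r_n$ in Lemma~\ref{lem:two-norm-to-0-norm} holds automatically there, and that lemma yields, for each fixed update time $t\ge T_0+(\log n)^2$ such that every site of $\Lambda_n$ is updated in $[t-(\log n)^2,t]$,
\[
\mathbb P\Big(\textstyle\sum_{x\in\Lambda_n}[(dh_x(t))^2\wedge 1]\le c\,\cN_t\,\Big|\,\cF_{t-(\log n)^2},\,\cP\Big)\le \exp(-r_n/C')
\]
for a universal constant $C'$.

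Finally I would assemble a union bound. Let $\cG$ be the intersection of: the $\ell^\infty$ event of Lemma~\ref{lem:max-bound}; the coupon-collector event that every site of $\Lambda_n$ is updated in the window $[t_j-(\log n)^2,t_j]$ for every update time $t_j\in[T_0+(\log n)^2,\,n^3]$ (which fails with probability $n^{O(1)}e^{-\Omega((\log n)^2)}=o(1)$); and the event that $\cP$ has at most $2n^5$ points in $[0,n^3]$; then $\mathbb P(\cG)=1-o(1)$. Since $dh_x(\cdot)$, $V_\cdot$ and $\cN_\cdot$ change only at update times, on $\cG$ the event $\cD^c$ forces $\sum_x[(dh_x(t))^2\wedge 1]\le c\,\cN_t$ at one of at most $2n^5$ update times $t$ in $[T_0+(\log n)^2,\,n^3\wedge\cT_I]$ (combining the reduction of the first paragraph, the bound $\cN_t\ge r_n$ of the second, and the coupon-collector property at such $t$). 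A union bound over those update times, applying the displayed conditional estimate (with the hypothesis ``every site updated in the window'' supplied by $\cG$ and hence inside the conditioning), gives
\[
\mathbb P(\cD^c)\le \mathbb P(\cG^c)+2n^5\exp(-r_n/C')=o(1)\,,
\]
since $r_n=(\log n)^3/(4C_{\max})$ grows faster than any power of $\log n$.

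I expect the main difficulty to be bookkeeping rather than conceptual---the genuinely new content sits in Lemmas~\ref{lem:two-norm-to-0-norm} and~\ref{lem:max-bound}. The points requiring care are: (i) calibrating $C$ against the $\ell^\infty$ constant so that $\tfrac1{C\log n}V_t$ is truly dominated by $c\,\cN_t$; (ii) handling the \emph{closed} endpoint $t=\cT_I$ via the boundedness of the downward jumps of $V_t$, so that $\cN_t\gtrsim(\log n)^3$ on the whole interval and not just its interior; and (iii) converting the conditional estimate of Lemma~\ref{lem:two-norm-to-0-norm} into an unconditional one by restricting the union bound to the (on $\cG$, polynomially many) update times and being careful about what is placed inside the conditioning $\cF_{t-(\log n)^2},\cP$.
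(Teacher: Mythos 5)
Your proof is correct and follows essentially the same route as the paper: use the $\ell^\infty$ bound of Lemma~\ref{lem:max-bound} to convert $V_t$ into $\cN_t$ at the cost of a $\log n$ factor, observe that $V_t\ge L_I$ forces $\cN_t$ to be polylogarithmically large throughout the relevant interval, and then union bound Lemma~\ref{lem:two-norm-to-0-norm} over the polynomially many update times. Your extra care at the closed endpoint $t=\cT_I$ (controlling the downward jump via the $\ell^\infty$ bound) is a genuine but minor gap-fill relative to the paper's write-up, which only argues $V_t\ge L_I$ for $t<\cT_I$.
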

    \begin{proof}
        The inequality will proceed from comparing the right and left hand quantities in the definition of $\cD$ to the number of discrepancies, $\cN_t$. 
        
               On the one hand, we have deterministically, that 
        \begin{align*}
            \cN_t \ge \frac{1}{\max_x dh_x(t)} \sum_{x\in \Lambda_n} dh_x(t)
        \end{align*}
        On the event that $\|\bh^{\bn}(t)\| \vee \|\bh^{\bg}(t)\| \le C \log n$, which, for a sufficiently large $C>0$, holds for all $t_\star \le t\le n^3$ except with probability $o(1)$ by Lemma~\ref{lem:max-bound}, we evidently have $\max_{x\in \Lambda_n} dh_x(t) \le 2C\log n$. Thus, with probability $1-o(1)$,  
        \begin{align*}
            \cN_t  \ge \frac{1}{2C \log n} \sum_{x\in \Lambda_n} dh_x(t) \qquad \mbox{for all $t_\star \le t \le n^3$}\,.
        \end{align*}
        The proof is complete if we establish that for some $c>0$,  with probability $1-o(1)$, 
        \begin{align*}
            \sum_{x\in \Lambda_n} \big[(dh_x(t))^2 \wedge 1\big] \ge c \cN_t \qquad \mbox{for all } {T_0 + (\log n)^2 \le t\le n^3\wedge \cT_I\,}.
        \end{align*}
        To see this, notice first that for all $t< \cT_I$, we have $V_t \ge L_I$, so that  
                \begin{align*}
             \cN_t \ge \frac{1}{2C\log n} L_I\ge (\log n)^2 \qquad \mbox{for all $t_\star \le t\le n^3 \wedge \cT_I$}\,,
        \end{align*}
        except with probability that is $o(1)$. We work now on that event and condition on all the Poisson clock rings $\cP$ such that there are at most $2n^{2}n^{3} = 2n^5$ many total clock rings between time $T_0$ and $n^3$ (the complement of this event has probability $o(1)$ by standard concentration for Poisson random variables). On that event, we can union bound over the at most $2n^{5}$ many times at which the discrepancy process can change, at each time applying the bound of Lemma~\ref{lem:two-norm-to-0-norm} with the choice $r = (\log n)^2$. 
    \end{proof}
    
    With the above preparation, we are now ready to prove 
    Proposition~\ref{prop:angle-bracket-lower-bound}.
    \begin{proof}[\textbf{\emph{Proof of Proposition~\ref{prop:angle-bracket-lower-bound}}}]
        Let $\cC$ be the event of Lemma~\ref{lem:V-t-localization} and work on that event as it has probability $1-o(1)$. 
        For any $t$ and any fixed configuration at time $t_-$, let $q_{x,t}$ denote the total-variation distance at the site $x$ between the update distributions for the two height functions. In particular, $q_{x,t}$ is given by 
        \begin{align*}
            q_{x,t} = \erf\Big(\frac{\mu_{x,t}}{2\sqrt{2}}\Big) \qquad \mbox{where} \qquad \mu_{x,t} = \frac{1}{4}\sum_{y\sim x} dh_y(t_-)\,.
        \end{align*} 
        By definition of the angle bracket from Definition~\ref{def:supermartingale-angle-bracket}, if $t$ is such that $\cT_{i} \le t \le \cT_{i+1}\wedge \cR$, then the time derivative of the angle bracket of $\widetilde V$ is given precisely by
        \begin{align}\label{eq:angle-bracket-growth}
            \partial_t \langle \widetilde V\rangle_t  =  \sum_{x\in \Lambda_n} \left[(1-q_{x,t}) (dh_x(t_-))^2 + q_{x,t} \mathbb E[Z_{x,t}^2 \mid \cF_{t_-}]\right]\,,
        \end{align}
        for $Z_{x,t}$ being a random variable given by 
        \begin{align*}
            Z_{x,t} = (2Y_{x,t} - dh_{x}(t_-))\wedge (R -\widetilde V_{t_-})\,, \qquad \mbox{where}\qquad R= n^{2}(\log n)^{-5-(i-1)}\,,
        \end{align*}
        and $Y_{x,t}$ having density $\bar \nu_1^{\mu_{x,t}}$ per Definition~\ref{def:DGFF-sticky-coupling}. 
        
        In the above expression for $\partial_t \langle \widetilde V\rangle_t$, the first term in the sum takes care of the situation where the sticky coupling fully couples the two Gaussians, and the second term, the case where the sticky coupling does not succeed. Since we are working on the event $\cC$, we have that $n^2 (\log n)^{-5 - (i-1)}- \widetilde V_{t_-}$ is at least $R/2$, and for $i\le I$, we have that $R/2 \ge 1$. Then, by~\eqref{eq:nu-1-density-lower-bound}, there exists a universal $\epsilon>0$ (uniform over $\mu_{x,t}$) such that uniformly over $\cF_{t_-}$,  
        \begin{align*}
            \mathbb E[Z_{x,t}^2\mid \cF_{t_-}] \ge \epsilon\,.
        \end{align*}
        As a consequence, for some other universal $\epsilon>0$, we get  
        \begin{align*}
            \partial_t \langle \widetilde V\rangle_t \ge \epsilon\sum_{x\in \Lambda_n} [(dh_x(t_-))^2 \wedge 1]\,.
        \end{align*}
        At this point, an application of Lemma~\ref{lem:two-norm-to-one-norm} evidently concludes the proof. 
    \end{proof}

	\section{Proof of mixing time upper bound}
	In this section, we conclude the proof of the mixing time upper bound by combining the results from previous sections to couple the Glauber dynamics chains from initializations $\bn$ and an arbitrary initialization $\bg$ respectively. 
	
	Let $(dh_x(t))$ be the discrepancy process, and let $V_t$ be the volume supermartingale as in~\eqref{eq:V-t}. Recall that the main result of Section~\ref{sec:volume-supermartingale} was establishing that for $t = t_\star + O(n^2 \log \log n)$, we have $V_t = O((\log n)^C)$. The aim of this section is to show that in a further $O(n^2\log \log n)$ time under the sticky coupling, it hits zero and the two chains coalesce.  There are several ways to do this and though all are reasonably straightforward, our approach involves breaking the analysis into two stages, getting $V_t$ to be $o((\log n)^{-C}),$ and then, from there down to zero.  
	
	\subsection{Getting the volume martingale to be $o(1)$}
	Once the volume martingale has been reduced to a polylogarithmic scale, its natural exponential contraction can be used to show it gets microscopically close to zero in a further $O(n^2 \log \log n)$ time. A more refined version of the arguments of Section~\ref{sec:volume-supermartingale} could probably establish this in a shorter $O(n^2)$ amount of time, but we make no attempt to optimize this since we anyways paid an $O(n^2 \log \log n)$ amount of time beyond the cutoff time of $t_\star$ in Section~\ref{sec:volume-supermartingale}.

	Throughout this section, let $T_I = t_\star + C_I n^2 \log \log n$ for a constant $C_I$ sufficiently large such that by Proposition~\ref{prop:V-t-down-to-polylog-scales}, with high probability $\cT_I \le T_I$ and by Lemma~\ref{lem:V-t-localization}, $V_{T_I} \le (\log n)^{6}$ with probability $1-o(1)$. 
	\begin{lemma}\label{lem:V-t-exponential-decay}
	There exists $C_F>0$ such that 
	\begin{align*}
	   \mathbb P\Big( V_{T_I + C_F n^2 \log \log n} \ge (\log n)^{-4} \Big)  = o(1)\,.
	\end{align*} 
	\end{lemma}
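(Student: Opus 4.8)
The plan is to exploit two structural features: that the discrepancy process $(dh_x(t))_{x\in\Lambda_n}$ stays non-negative under the sticky coupling (monotonicity, as noted after Definition~\ref{def:DGFF-sticky-coupling}), and that its conditional mean evolves by the killed heat semigroup, so that $V_t=\sum_x dh_x(t)=\|d\bh(t)\|_1$ decays in expectation at the sharp rate $\lambda_\one$. First recall that, by the choice of $T_I=t_\star+C_I n^2\log\log n$, Proposition~\ref{prop:V-t-down-to-polylog-scales} together with Lemma~\ref{lem:V-t-localization} give that the event $\mathcal B:=\{V_{T_I}\le (\log n)^{6}\}$ has probability $1-o(1)$, and $\mathcal B\in\cF_{T_I}$.

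Next I would condition on $\cF_{T_I}$. By the Markov property, $\bh^\bn(T_I+s)$ (resp.\ $\bh^\bg(T_I+s)$) is distributed as the DGFF Glauber dynamics run for time $s$ from the configuration $\bh^\bn(T_I)$ (resp.\ $\bh^\bg(T_I)$). By Corollary~\ref{cor:random-walk-representation} and Proposition~\ref{prop:mean-and-variance-of-h}, together with the fact (used in the proof of Lemma~\ref{lem:annealed-expectation-against-random-walk}) that $\mathbb E_\cP\,\mathbb P_\cE(S_{-s}^{x,\cP}=y)$ is the transition probability of the continuous-time simple random walk killed at $\partial\Lambda_n$, the mean of the dynamics started from a configuration $g$ vanishing on $\partial\Lambda_n$ is $e^{s\Delta_n}g$, with $\Delta_n$ as in~\eqref{eq:Laplacian}. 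Hence, by linearity, $\mathbb E[d\bh(T_I+s)\mid\cF_{T_I}]=e^{s\Delta_n}\,d\bh(T_I)$. Since $d\bh(T_I)\ge 0$, since $e^{s\Delta_n}$ has non-negative entries ($\Delta_n+I$ being the sub-stochastic killed-walk kernel), and since $\Delta_n$ is symmetric, I would write
\[
\mathbb E[V_{T_I+s}\mid\cF_{T_I}]=\big\langle \one,\,e^{s\Delta_n}d\bh(T_I)\big\rangle=\big\langle e^{s\Delta_n}\one,\,d\bh(T_I)\big\rangle\le V_{T_I}\cdot\max_{x\in\Lambda_n}\big(e^{s\Delta_n}\one\big)_x .
\]
Now $(e^{s\Delta_n}\one)_x$ is exactly the survival probability $\mathbb P(\widetilde Y^x_s\notin\partial\Lambda_n)$ of the continuous-time killed walk; taking $s=C_F n^2\log\log n$ so that $n^2=o(s)$, Lemma~\ref{lem:survival-probability-srw} and Fact~\ref{fact:order-of-magnitudes} give $\max_x(e^{s\Delta_n}\one)_x=O(e^{-\lambda_\one s})=O\big((\log n)^{-(1+o(1))\pi^2 C_F/2}\big)$, using $\lambda_\one=\tfrac{\pi^2}{2n^2}(1+o(1))$.

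Finally, on $\mathcal B$ one has $V_{T_I}\le(\log n)^6$, so $\mathbb E[V_{T_I+s}\,\one_{\mathcal B}]\le(\log n)^6\cdot O\big((\log n)^{-(1+o(1))\pi^2 C_F/2}\big)$; choosing $C_F$ a sufficiently large absolute constant (e.g.\ $C_F=30/\pi^2$) makes this $o((\log n)^{-7})$. By Markov's inequality, $\mathbb P\big(V_{T_I+s}\,\one_{\mathcal B}\ge(\log n)^{-4}\big)\le(\log n)^{4}\cdot o((\log n)^{-7})=o(1)$, and combining with $\mathbb P(\mathcal B^c)=o(1)$ yields the claim with the stated $s=C_F n^2\log\log n$. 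The only mildly delicate point is the step producing decay of $V_t$ at the sharp rate $\lambda_\one$: one cannot use $\|e^{s\Delta_n}\|_{1\to 1}\le 1$ directly, and the trick is to use self-adjointness of $\Delta_n$ and $d\bh(T_I)\ge 0$ to dualize to the $\ell^\infty$ bound on $e^{s\Delta_n}\one$, i.e.\ to the survival probability controlled by Lemma~\ref{lem:survival-probability-srw} — which is precisely what forces the extra time $s$ to be $\gg n^2$, still well within the $O(n^2\log\log n)$ budget.
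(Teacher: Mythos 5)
Your proof is correct and follows essentially the same route as the paper: condition on $\cF_{T_I}$, use the BTRW/random-walk representation to show that $\mathbb E[V_{T_I+s}\mid\cF_{T_I}]$ decays from $V_{T_I}\le(\log n)^6$ at rate $\lambda_\one$ over a further $s=C_Fn^2\log\log n$, then close with Markov's inequality. The only cosmetic difference is how you obtain the annealed decay bound: you dualize via self-adjointness of $\Delta_n$ to reduce to the $\ell^\infty$ survival-probability bound of Lemma~\ref{lem:survival-probability-srw}, whereas the paper instead invokes its Lemma~\ref{lem:annealed-expectation-against-random-walk} (which gets there by bounding the one-point transition probability $\mathbb P(\widetilde Y^x_t=y)\le Cn^{-2}e^{-\lambda_\one t}$ via the Markov property and a local CLT); both deliver $\mathbb E[V_{T_I+s}\mid\cF_{T_I}]\le Ce^{-\lambda_\one(s-O(n^2))}V_{T_I}$, so the arguments are interchangeable.
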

	\begin{proof}
	Condition on $\cF_{T_I}$, and consider for $t > T_I$, 
	\begin{align*}
	    \mathbb E [ V_t \mid \cF_{T_I}] = \mathbb E_{\cP} \Big[\sum_{x\in \Lambda_n} \mathbb E[d \bh^\cP_x(t) \mid \cF_{T_I}]\Big] = \mathbb E_{\cP} \Big[\sum_{x\in \Lambda_n}\mathbb E[d\bh(S_{-(t-T_I)}^{x,\cP},T_I)]\Big]\,.
	\end{align*}
	This then fits into the framework of Lemma~\ref{lem:annealed-expectation-against-random-walk}, from which we deduce that as long as $t\ge T_I + n^2$, we have 
	\begin{align*}
	    \mathbb E[ V_t \mid \cF_{T_I}] \le Ce^{ - \lambda_\one (t- T_I - n^2)} V_{T_I}\,.
	\end{align*}
	Taking $t=  T_I + C_F n^2 \log \log n$ for a $C_F$ sufficiently large, and noting that with probability $1-o(1)$, $V_T \le (\log n)^6$, we see that with probability $1-o(1)$ we have 
	\begin{align*}
	    \mathbb E[V_t] \le (\log n)^{-6}\,,
	\end{align*}
	from which the claim follows by a Markov inequality. 
	\end{proof}
	
	\subsection{Coupling in one sweep from there}
	As a byproduct of Lemma~\ref{lem:V-t-exponential-decay}, we know that after some $t_* + O(n^2 \log \log n)$ time, $V_t$ is at most $(\log n)^{-4}$. From there, we claim that the coalescence happens in essentially one final sweep of updates. Let 
	\begin{align*}
		\cT_F = \{\inf t : V_t \le (\log n)^{-3}\}\,,
	\end{align*}
	so that by Lemma~\ref{lem:V-t-exponential-decay}, with probability $1-o(1)$, $\cT_F \le T_F$. 
	
	\begin{lemma}\label{lem:coalesce-final-sweep}
	For every $s \ge (\log n)^2$,   
	\begin{align*}
	    \mathbb P(V_{\cT_F + s} \ne 0 \mid \cF_{\cT_F}) = o(1)\,.
	\end{align*}
	\end{lemma}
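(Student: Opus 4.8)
The plan is to show that, under the sticky coupling, once the volume supermartingale has been brought down to scale $(\log n)^{-3}$ at time $\cT_F$, a single extra ``sweep'' of length $(\log n)^2$ zeros it out with high probability. First, it suffices to treat $s=(\log n)^2$: the two chains share their Poisson clocks under the sticky coupling, so if they ever agree ($V_u=0$) they remain equal at all later times; hence $\{V_{\cT_F+s}\neq 0\}$ is a decreasing family of events in $s$, and it is enough to bound $\P(V_{\cT_F+(\log n)^2}\neq 0\mid\cF_{\cT_F})$.

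Fix $\epsilon_n=(\log n)^{-5/2}$ (any level that is both $\omega((\log n)^{-3})$ and $o((\log n)^{-2})$ works) and consider, conditionally on $\cF_{\cT_F}$, three events. (i) The event $\cE:=\{V_t<\epsilon_n\text{ for all }t\ge\cT_F\}$: since $(V_t)_{t\ge\cT_F}$ is a non-negative supermartingale (Claim~\ref{clm:V-t-supermartingale}) with $V_{\cT_F}\le(\log n)^{-3}$, Ville's maximal inequality (as in the proof of Lemma~\ref{lem:V-t-localization}) gives $\P(\cE^c\mid\cF_{\cT_F})\le\epsilon_n^{-1}(\log n)^{-3}=(\log n)^{-1/2}=o(1)$. (ii) The event $\cU$ that every $x\in\Lambda_n$ is updated during the window $[\cT_F,\cT_F+(\log n)^2]$: a union bound over the $n^2$ independent rate-$1$ clocks gives $\P(\cU^c\mid\cF_{\cT_F})\le n^2e^{-(\log n)^2}=o(1)$. (iii) The event $\{F\ge 1\}$, where $F$ counts the updates in the window at which the sticky coupling fails to couple. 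On $\cU\cap\{F=0\}$ every site is updated at least once and each such update coalesces the two heights at that site (and $dh_x$ changes only at updates of $x$), so $V_{\cT_F+(\log n)^2}=0$; hence it remains to show $\P(\cE\cap\{F\ge1\}\mid\cF_{\cT_F})=o(1)$.

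For this last point, introduce $\tau:=\inf\{t\ge\cT_F:V_t\ge\epsilon_n\}\wedge(\cT_F+(\log n)^2)$, so that on $\cE$ one has $\tau=\cT_F+(\log n)^2$ and $F$ equals the number of failures in $[\cT_F,\tau]$. The failures form a Poisson thinning: site $x$'s clock rings at rate $1$ and its update fails with probability $q_{x,u}=\erf(\mu_{x,u}/(2\sqrt2))$ given $\cF_{u^-}$, where $\mu_{x,u}=\tfrac14\sum_{y\sim x}dh_y(u^-)$ is the gap between the two Gaussian update means at $x$ (exactly as in the proof of Proposition~\ref{prop:angle-bracket-lower-bound}), so the failure counting process has predictable intensity $\lambda_u=\sum_{x\in\Lambda_n}q_{x,u}$. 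Using $\erf(z)\le\tfrac{2}{\sqrt\pi}z$ and that each $y\in\Lambda_n$ has at most four neighbours in $\Lambda_n$,
\[
\lambda_u\;\le\;\frac{1}{\sqrt{2\pi}}\sum_{x\in\Lambda_n}\mu_{x,u}\;\le\;\frac{1}{\sqrt{2\pi}}\sum_{y\in\Lambda_n}dh_y(u^-)\;=\;\frac{V_{u^-}}{\sqrt{2\pi}}\;\le\;\epsilon_n\qquad\text{for }u\in[\cT_F,\tau)\,.
\]
Optional stopping applied to the compensated counting process then gives
\[
\E\big[\#\{\text{failures in }[\cT_F,\tau]\}\ \big|\ \cF_{\cT_F}\big]\;=\;\E\Big[\int_{\cT_F}^{\tau}\lambda_u\,du\ \Big|\ \cF_{\cT_F}\Big]\;\le\;\epsilon_n(\log n)^2\;=\;(\log n)^{-1/2}=o(1)\,,
\]
and since on $\cE$ this count equals $F$, Markov's inequality yields $\P(\cE\cap\{F\ge1\}\mid\cF_{\cT_F})=o(1)$. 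Combining with (i) and (ii), $\P(V_{\cT_F+(\log n)^2}\neq0\mid\cF_{\cT_F})\le\P(\cE^c)+\P(\cU^c)+\P(\cE\cap\{F\ge1\})=o(1)$, which by the reduction above proves the lemma.

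The step I expect to be the main obstacle is (iii). A naive union bound over the $n^2$ sites fails badly: a single update near the active region can fail with probability of order $(\log n)^{-3}$, and $n^2(\log n)^{-3}\to\infty$. The resolution is the elementary estimate $\lambda_u\lesssim V_{u^-}$ --- because all discrepancies are non-negative and sum to $V_t$, the total failure intensity at any instant is controlled by $V_t$ itself rather than by $n^2$ times a worst-case site --- combined with Ville's inequality, which lets us keep $V_t$ just below the level $(\log n)^{-2}$ at which the expected number of failures over a $(\log n)^2$-sweep is $o(1)$. (In contrast to Section~\ref{sec:volume-supermartingale}, this crude first-moment argument does not need the finer ``zero--one'' structure of the sticky coupling from Lemma~\ref{lem:domination-of-bernoullis}.)
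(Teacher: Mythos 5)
Your proof is correct, and it takes a genuinely different route from the paper's. The paper conditions on the Poisson update sequence, arranges that every site is updated and no site more than $(\log n)^2$ times, and then performs a union bound over the event that a given update is the \emph{first} failure of the sticky coupling. The monotonicity observation that if all earlier updates succeed then $d\bh(\cT_F+s_i^-)\le d\bh(\cT_F)$ pointwise lets it cap the failure probability at update $i$ by $\erf\big(\tfrac{1}{8\sqrt2}\sum_{y\sim x_i}dh_y(\cT_F)\big)$, and summing over at most $(\log n)^2$ updates per site yields a bound of order $(\log n)^2 V_{\cT_F}$. You replace the ``first failure'' bookkeeping by (a) Ville's inequality to confine $V_t$ below a cushion level $\epsilon_n$ slightly above $V_{\cT_F}$, and (b) a predictable-intensity bound $\lambda_u\le V_{u^-}/\sqrt{2\pi}$ on the thinned Poisson process of sticky-coupling failures, together with optional stopping at the (bounded) stopping time $\tau$ to get $\E[\text{failures}]\le\epsilon_n(\log n)^2$. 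This avoids the ``bounded number of updates per site'' conditioning entirely, at the cost of an extra Ville application and a loss of a $\sqrt{\log n}$ factor relative to the paper's $(\log n)^2V_{\cT_F}$; both come in well under $o(1)$. One detail worth stating explicitly if you write this up: for $u\in(\cT_F,\tau)$ you need $V_{u^-}\le\epsilon_n$, which follows because $V_s<\epsilon_n$ for all $s<\tau$ and the left limit of such values is $\le\epsilon_n$; and the optional stopping step is legitimate because $\tau-\cT_F\le(\log n)^2$ is deterministically bounded and $\lambda_u$ is bounded on $[\cT_F,\tau)$.
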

	\begin{proof}
	    Condition on $\cF_{\cT_F}$ as well as the update sequence $\cP$. Let $\sigma$ be the smallest number such that every site's clock rings at some point in the interval $[\cT_F,\cT_F + \sigma]$. Suppose that $\cP$ is such that $\sigma \le s$, and no site is updated more than $(\log n)^2$ many times in the interval $[\cT_F, \cT_F + \sigma]$. The intersection of these events has probability $1-o(1)$ by standard coupon collecting arguments. 
	    
	    We will union bound over the probability of the sticky coupling failing to couple the two Gaussians in any one of the updates in the time interval $[\cT_F, \cT_F+ \sigma]$.  
	    In particular, we union bound over the probability that some update $(\cT_F+s_i)_{i: s_i \le \sigma}$ is the \emph{first} update to fail to couple. (If all updates in the interval $[\cT_F, \cT_F + \sigma]$ succeed at coupling, then we are done since all sites in $\Lambda_n$ are updated at least once during that period.) For any $s_i$, the probability that the update at $\cT_F + s_i$ at site $x_i$ is the first one to fail is at most the probability that the sticky coupling of the two Gaussians fails, maximized over all possible realizations of $d\bh(\cT_F+s_i^-)$ that are pointwise smaller than $d\bh(\cT_F)$. The fact that we only need to consider $d\bh(\cT_F+s_i^-)$ that are pointwise smaller than $d\bh(\cT_F)$ is because if all prior updates in $[\cT_F, \cT_F + s_i^-]$ have succeeded, then necessarily $d\bh(\cT_F+s_i^-)$ is pointwise smaller than $d\bh(\cT_F)$. This failure probability at time $\cT_{F}+s_i$ is then bounded by the total-variation distance between standard Gaussians whose mean differs by 
	    \begin{align*}
	        \frac 14 \sum_{y\sim x}dh_y(\cT_F+s_i^-) \le \frac 14 \sum_{y\sim x} dh_y(\cT_F)\,.
	    \end{align*}
	    This total-variation distance is in turn at most  
	    \begin{align*}
	    \erf \Big( \frac{1}{8\sqrt{2}} \sum_{y\sim x} dh_y(\cT_F)\Big) \le \frac{1}{4\sqrt{2}\pi} \sum_{y\sim x} dh_y(\cT_F)\,.
	    \end{align*}
	    Summing over all updates in the period $[\cT_F, \cT_F+ \sigma]$, we find that the union bound gives a failure probability bounded by 
	    \begin{align*}
	        C \sum_{x_i} \sum_{y\sim x_i} dh_y(\cT_F)\le 4C(\log n)^2 \sum_{x\in \Lambda_n} dh_x(\cT_F) = 4C (\log n)^2  V_{\cT_F}\,.
	    \end{align*}
	    This is evidently $o(1)$ by the definition of $\cT_F$. 
	\end{proof}
	
	\subsection{Upper bound on mixing time}
	We now stitch together the above stages in order to conclude the desired mixing time upper bound of Theorem~\ref{thm:main-revised}. 
	
	\begin{proof}[\textbf{\emph{Proof of  Theorem~\ref{thm:main-revised}: upper bound}}]
	In what follows, let $T = t_\star + C_\star n^2 \log \log n$ for a sufficiently large $C_\star$. It suffices for us to show that for every initialization $\bg$ satisfying $\|\bg\|_\infty \le n$, there exists a coupling of $(\bh^\bn, \bh^\bg)$ such that 
	\begin{align}\label{eq:wts-mixing-upper-bound}
	\|\mathbb P(\bh^\bn(T) \in \cdot) - \mathbb P (\bh^\bg(T) \in \cdot) \|_\tv = o(1)\,.
	\end{align}
	Indeed, given this, one concludes the mixing time bound as follows. First draw $\bg \sim \pi$, then observe that with probability $1-o(1)$,  $\bg \sim \pi$ satisfies $\|\bg\|_\infty \le n$, and use the coupling to $\bh^\bn$ for that initialization $\bg$; this yields a coupling attaining $\|\mathbb P(\bh^\bn(T) \in \cdot) - \pi\|_\tv = o(1)$. Then, uniformly over initializations $\bg$ having $\|\bg\|_\infty \le n$, by the above and a triangle inequality,
	\begin{align*}
	\|\mathbb P(\bh^\bg(T)\in \cdot) - \pi \|_\tv \le \|\mathbb P (\bh^\bg(T) \in \cdot ) - \mathbb P(\bh^\bn(T) \in \cdot)\| + \|\mathbb P(\bh^\bn(T) \in\cdot) - \pi\|_\tv = o(1)\,.
	\end{align*}
	We claim that the coupling described in Definition~\ref{def:two-stage-coupling}, 
	attains the inequality of~\eqref{eq:wts-mixing-upper-bound}. Indeed, since this coupling is monotone for all times, 
	\begin{align*}
		\|\mathbb P(\bh^\bn(T) \in \cdot) - \mathbb P (\bh^\bg(T) \in \cdot) \|_\tv \le \mathbb P (V_T \ne 0)\,.
	\end{align*}  
	If the constant $C_\star$ in the definition of $T$ is sufficiently large, this probability is $o(1)$ by combining Proposition~\ref{prop:V-t-down-to-polylog-scales}, Lemma~\ref{lem:V-t-exponential-decay} and Lemma~\ref{lem:coalesce-final-sweep}. 
	\end{proof}
	
	We now make a comment on Remark~\ref{rem:general-initializations} from the introduction. Suppose that we want to only consider the mixing time restricted to initializations $\bg$ such that $\|\bg\|_\infty \le a_n$ for a sequence $\frac{a_n}{\log n} \to \infty$. The proof of the mixing time upper bound would then be essentially identical to the proof of Theorem~\ref{thm:main-revised}, with the sole distinction being that the volume super-martingale is brought down to size $n^2 (\log n)^{ - 5}$ in time that is $t_\star(a_n) + O(n^2 \log \log n)$ in the resulting analogue of Lemma~\ref{lem:stage-1}. 

\section{Sharp total-variation distance lower bound}
In this section, we establish the lower bound on the total-variation distance to stationarity, and as a consequence, the lower bound for the mixing time attaining the expected cutoff profile. In the 1D case of the exlcusion process, in~\cite{Lacoin-cutoff-circle}, the cutoff profile is obtained by considering the convergence time of the first fourier mode of the height function, i.e., the inner product with $\varphi_\one$. By way of analogy, our lower bound is obtained by considering the test function 
\begin{align}\label{eq:F-test-function}
    F(\bh) = \langle \varphi_\one, \bh\rangle = \sum_{x\in \Lambda_n} \varphi_\one(x) h(x)\,.
\end{align}
Since constants will be relevant to the cutoff profile, we recall that $\varphi_\one$ is the normalized eigenvector so that $\langle \varphi_\one,\varphi_\one\rangle =1$, and we defined $\widehat \varphi_\one  = \varphi_\one \langle \varphi_\one , \one\rangle = \frac{8 n}{\pi^2}(1+o(1)) \varphi_\one$. We will prove Theorem~\ref{thm:main-lower-bound}, from which the corresponding lower bounds of Theorem~\ref{thm:main-revised} and its generalization in Remark~\ref{rem:general-initializations} follow immediately. 
	
	\begin{proof}[\textbf{\emph{Proof of Theorem~\ref{thm:main-lower-bound}: lower bound}}]
We construct initializations having $\|\bh(0)\|_\infty \le a_n$ attaining the claimed total variation lower bound. These initializations will be deterministic shifts of the stationary distribution, which will give us better control on their covariance profile over time than if they started from e.g., the maximal all $a_n$ initialization. Namely, given $\cP$, their time evolutions will simply be deterministic shifts of the stationary measure. This is formalized by the following lemma which follows from the form of the Glauber dynamics updates. 

\begin{lemma}\label{lem:stationary-with-shift}
Fix any $\vec{l} = (l_x)_{x\in \Lambda_n}$. Consider the DGFF Glauber dynamics initialized from (a random) $\bg$ where $\bg - \vec{l}\sim \pi$. Then for any fixed Poisson update sequence $\cP$, if we let 
$$f_x^\cP(t) = \mathbb E[\vec{l}(S_{-t}^{x,\cP})]\,,$$
then we have that 
\begin{align*}
(h_x^\cP(t)- f_x^\cP(t))\sim \pi\,.
\end{align*}
\end{lemma}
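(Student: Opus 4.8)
The plan is to deduce this from two ingredients already in place: the linearity of the backwards random walk representation~\eqref{eq:h-P} in the initial data, and the invariance of $\pi$ under the Gibbs sampler along an arbitrary \emph{deterministic} sequence of update times and sites. First I would decompose the (random) initialization as $\bg = \vec l + \tilde\bh$, where $\tilde\bh \sim \pi$ is chosen independently of $\cP$ and of the Gaussians $(Z_p)_{p\in\cP}$, and where all three vectors are viewed as functions on $\overline\Lambda_n$ vanishing on $\partial\Lambda_n$. For each fixed realization of $\cP$ and of $(Z_p)_p$, the map
\[
\bh(0)\;\longmapsto\;\Big(\mathbb E_{\cE^x}\Big[h(S_{-t}^{x,\cP},0)+\sum_{p\in \bS_{[-t,0]}^{x,\cP}}Z_p\Big]\Big)_{x\in\Lambda_n}
\]
is affine in $\bh(0)$, with a linear part that does not depend on the chosen initialization (this is the content of the identity-coupling identity used in the proof of Lemma~\ref{lem:stage-1}, and is also visible in the proof of Proposition~\ref{prop:monotonicity-of-mean-variance}). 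Hence, applying it to $\bg$ and to $\tilde\bh$ and subtracting,
\[
h_x^\cP(t)\;=\;\mathbb E_{\cE^x}\big[\vec l(S_{-t}^{x,\cP})\big]\;+\;\mathbb E_{\cE^x}\Big[\tilde h(S_{-t}^{x,\cP},0)+\sum_{p\in \bS_{[-t,0]}^{x,\cP}}Z_p\Big]\;=\;f_x^\cP(t)+\tilde h_x^\cP(t),
\]
where $\tilde\bh^\cP(t)=(\tilde h_x^\cP(t))_{x\in\Lambda_n}$ is the process~\eqref{eq:h-P} started from $\tilde\bh$, and $f_x^\cP(t)=\mathbb E_{\cE^x}[\vec l(S_{-t}^{x,\cP})]$ is a deterministic function of $\cP$ alone.

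Next I would show that, conditionally on $\cP$, one has $\tilde\bh^\cP(t)\sim\pi$, the conditional law being taken over $(Z_p)_p$ and the random initialization $\tilde\bh\sim\pi$. By Theorem~\ref{thm:rw-representation-exact-equality}, for this fixed $\cP$ there is a coupling of $(Z_p)_p$ with the update Gaussians $(G_j)_j$ of Definition~\ref{def:surface-RW-coupling} under which $\tilde\bh^\cP(t)$ coincides with the genuine Glauber dynamics $\bh^{\cQ_t}(t)$, $\cQ_t=\vartheta_t\cP$, run from initial configuration $\tilde\bh$ with clock-ring sequence $\cQ_t$ on $[0,t]$. Since each Gibbs-sampler update resamples the value at a site from its $\pi$-conditional law given its neighbours, $\pi$ is invariant under a single update at any prescribed site; iterating over the finitely many rings of $\cQ_t$ in $[0,t]$ shows that $\bh^{\cQ_t}(t)\sim\pi$ whenever its initial configuration is $\pi$-distributed. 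Thus $\tilde\bh^\cP(t)\sim\pi$ conditionally on $\cP$, and subtracting the $\cP$-measurable vector $f^\cP(t)$ gives $h^\cP(t)-f^\cP(t)=\tilde\bh^\cP(t)\sim\pi$.

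I do not expect a genuine obstacle here: the statement is essentially a bookkeeping consequence of the linearity of~\eqref{eq:h-P} together with stationarity of $\pi$ under arbitrary deterministic update schedules, and it uses the same mechanism as the covariance computation in the proof of Proposition~\ref{prop:monotonicity-of-mean-variance}. The only points to be careful about are that $f^\cP(t)$ is measurable with respect to $\cP$ alone --- so that subtracting it does not disturb the conditional law over $(Z_p)_p$ and $\tilde\bh$, which is transparent from the displayed decomposition --- and that one passes through the exact coupling of Theorem~\ref{thm:rw-representation-exact-equality}, so that the invariance of $\pi$ is applied to the actual Glauber dynamics rather than directly to the representation~\eqref{eq:h-P}.
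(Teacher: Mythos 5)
Your proof is correct and is essentially the same as the paper's: you decompose $\bg = \vec l + \tilde\bh$ with $\tilde\bh\sim\pi$ and exploit affine dependence on initial data so that the $\vec l$-part evolves deterministically as $f^\cP(t)=\mathbb E_\cE[\vec l(S_{-t}^{x,\cP})]$ while the $\tilde\bh$-part remains $\pi$-distributed (conditionally on $\cP$) by stationarity of the Gibbs sampler under any fixed update schedule. The paper phrases the same mechanism via the identity coupling of $\bh^\bg$ with a dynamics started from $\pi$ and identifies $f^\cP$ as the discrete heat equation driven by $\cP$; your derivation directly from the BTRW formula~\eqref{eq:h-P} is an equivalent bookkeeping of the same cancellation.
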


We defer the proof of the lemma to after the conclusion of the proof of the claimed cutoff profile lower bound. Let $C_0$ be a large enough constant that the maximum of a DGFF on $\Lambda_n$ is smaller than $C_0\log n$ with probability $1-o(1)$. 

Let $l_x = a_n - C_0 \log n$ for all $x\in \Lambda_n$ and consider the initialization $\bh(0) = \bg$ where $\bg - \vec{l} \sim \pi$. 
Notice that since $a_n/\log n\to \infty$, with probability $1-o(1)$, $\|\bg\|_\infty \le a_n$, so it suffices to prove the lower bound for this distribution over initializations. 

We also notice that by the definition of total-variation distance and a reverse triangle inequality, for any event $\Gamma$ for the Poisson update sequence, 
	\begin{align}\label{eq:tv-distance-lower-bound}
	\|\mathbb P(\bh(t) \in \cdot) - \pi\|_\tv & \ge \|\mathbb P ( F(\bh(t))\in \cdot) - \pi(F(\bh)\in \cdot)\|_\tv \nonumber\\
	& \ge \inf_{\cP\in \Gamma} \|\mathbb P(F(\bh^\cP(t))\in \cdot) - \pi(F(\bh)\in \cdot)\|_\tv \nonumber\\ 
	& \qquad - \sup_{\cP,\cP'\in \Gamma} \|\mathbb P(F(\bh^\cP(t))\in \cdot) - \mathbb P(F(\bh^{\cP'}(t)) \in \cdot) \|_\tv - \mathbb P(\cP\notin \Gamma) 
	\end{align}
	As such, it suffices for us to condition on the Poisson update times for some typical set $\Gamma$, and consider the two total variation distances above. 

Let $\Gamma$ denote the set of all $\cP$ such that Proposition~\ref{prop:mean-of-h} applies; notice that by that proposition then, $\mathbb P (\cP\notin \Gamma) = o(1)$. Let 
\begin{align*}
    T_s = t_\star(a_n) + \frac{\pi^2}{2} s n^2\,.
\end{align*}
For any such $\cP$, uniformly over $x\in \Lambda_n$, 
\begin{align*}
    \mathbb P \Big(S_{-T_s}^{x,\cP}\notin \partial \Lambda_n\Big) = \widehat \varphi_\one(x)(1+o(1))\frac{1}{a_n}e^{ - s}\,.
\end{align*}
Using the fact that $a_n$ goes to infinity faster than order $\log n$, we also have that $\vec{l} = a_n (1+o(1))$.  Therefore, uniformly over $\cP\in \Gamma$, 
\begin{align}\label{eq:mean-quantity-lb}
    \vec{f}^\cP(T_s) = \mathbb E[\vec{l}(S_{-T_s}^{x,\cP})]=  \frac{8n}{\pi^2} \varphi_\one (1+o(1))e^{-s}\,, \qquad \mbox{for all $x\in \Lambda_n$}\,.
\end{align}
We now need to bound two total-variation distances on the right-hand side of~\eqref{eq:tv-distance-lower-bound}.  
 For these total-variation computations, we use Lemma~\ref{lem:stationary-with-shift} to observe that
\begin{align*}
    \bh^\cP(t) \stackrel{d}= \cN(\vec{f}^\cP(t), G)\,.
\end{align*}
By linearity of $F$, $F(\bh^{\cP}(t))$ is Gaussian, and by an explicit calculation, it is distributed as 
\begin{align*}
    F(\bh^{\cP}(t)) \sim \cN\big(\langle  \varphi_\one, \vec{f}^\cP(t)\rangle, \langle \varphi_\one, G \varphi_\one\rangle \big) = \cN\big(\langle  \varphi_\one, \vec{f}^\cP(t)\rangle, \lambda_\one^{-1} \big)\,.
\end{align*}
Let us now consider the two total variation distances in~\eqref{eq:tv-distance-lower-bound}, beginning with the second which we wish to show is $o(1)$. Uniformly over $\cP, \cP'\in \Gamma$, factoring out a $\lambda_\one^{1/2}$, we have
\begin{align*}
    \|\cN(\lambda_\one^{1/2}\langle  \varphi_\one, \vec{f}^\cP(t)\rangle, 1) - \cN(\lambda_\one^{1/2}  & \langle  \varphi_\one, \vec{f}^{\cP'}(t)\rangle, 1)\|_\tv \\
    & =\erf\Big(\frac{\lambda_\one^{1/2}}{2\sqrt 2}\langle  \varphi_\one, \vec{f}^\cP(t) - \vec{f}^{\cP'}(t)\rangle\Big)\,.
\end{align*}
If we take $t = T_s$ and plug in $\lambda_\one = \frac{\pi^2}{2n^2}(1+o(1))$ and~\eqref{eq:mean-quantity-lb}, we find this to be $\erf(\frac{2}{\pi} e^{ - s} \cdot o(1))$ which is $o(1)$ by linearity of the $\erf$ function near zero. 

Now consider the first term in~\eqref{eq:tv-distance-lower-bound}. Similar to the above, 
\begin{align*}
    \|\mathbb P(F(\bh^\cP(T_s))\in \cdot) - \pi(F(\bh)\in \cdot)\|_\tv &  = \erf\Big(\frac{\lambda_\one^{1/2}}{2\sqrt{2}} \langle \varphi_\one,\vec{f}^{\cP}(T_s)\rangle \Big) = (1+o(1))\erf\Big(\frac{2}{\pi} e^{ - s}\Big)\,.
\end{align*}
Again the $o(1)$ term above is uniform over $\cP \in \Gamma$. Combining the three terms above and taking $n\to\infty$, we obtain the desired lower bounding cutoff profile.  
\end{proof}
	
	It remains to prove Lemma~\ref{lem:stationary-with-shift}, which we used in the above proof. 
	\begin{proof}[\textbf{\emph{Proof of Lemma~\ref{lem:stationary-with-shift}}}]
	    Consider the identity coupling (recall Remark~\ref{rem:identity-coupling}) of the DGFF dynamics $\bh^\bg$ to the dynamics initialized from $\pi$. Stationarity of $\pi$ implies that for every~$t \ge 0$, 
\begin{align*}
    \bh^\cP(t) - \vec{f}^\cP(t) \sim \pi\,,
\end{align*}
where $\vec{f}_x^\cP(t)$ is the deterministic discrete heat equation process defined as follows: initialize $\vec{f}^\cP(0)= \vec{l}$, and if the clock at a site $y$ rings in $\cP$ at time $t$, then let $f_y^\cP(t) = \frac{1}{4} \sum_{z\sim y} f_z^\cP(t_-)$. By analogous reasoning to that used in Section~\ref{sec:random-walk-representation}  (indeed this process is the same as a DGFF dynamics in which the Gaussians used in the updates are all set to be zero), one has 
\begin{align*}
    f_x^\cP(t) = \mathbb E [\vec{l}(S_{-t}^{x,\cP})]\qquad \mbox{for all $x\in \Lambda_n$}\,.
\end{align*}
This is exactly the claimed expression for the centering $\vec{f}^\cP(t)$. 
	\end{proof}

	\appendix
	\section{Standard estimates for simple random walks}\label{sec:appendix}
	In this appendix, we include all the estimates we laid out in Section~\ref{subsec:random-walk-prelims} on the discrete-time simple random walk on $\Lambda_n$ absorbed at $\partial \Lambda_n$. 
	
	\subsection{Spectral theory of the random walk on $\Lambda_n$}
	
	Consider the discrete Laplacian  $\Delta_n$ on $\Lambda_n$ as defined in~\eqref{eq:Laplacian}. 
	This is the same as the Dirichlet Laplacian (i.e., domain restricted to functions which are identically zero on $\partial \Lambda_n$) on the graph $\Lambda_n \cup \partial \Lambda_n$. 
	
	We will find that the eigenvectors of $-\Delta_n$ are $2$-way products of the eigenvectors on the line-segment $\llb 1,n-1\rrb$. More precisely, the eigenvectors of $-\Delta_n$ are indexed by pairs $\mathbf{i} = (i_1,i_2)$ and if $x=(x_1,x_2)\in \Lambda_n$, they are given by 
	\begin{align*}
	\varphi_{\mathbf{i}} (x) : = \varphi_{i_1}(x_1)\varphi_{i_2}(x_2) \qquad \mbox{where} \qquad \varphi_{i}(k) = \sqrt{\frac{2}{n}}\sin \Big(\frac{k i \pi}{n}\Big)\,. 
	\end{align*}
	Notice that these functions take the value $0$ at $k =0$ and $k=n$, so that for every $\mathbf i$, $\varphi_i$ is identically zero on $\partial \Lambda_n$, consistent with the Dirichlet boundary conditions. 
	
	Similarly, we will find that the eigenvalues corresponding to these eigenvectors are given by 
	\begin{align*}
	\lambda_{\mathbf i} = \frac{1}{2} (\lambda_{i_1} + \lambda_{i_2}) \qquad \mbox{where} \qquad \lambda_i = 1- \cos\Big(\frac{i \pi}{n}\Big)\,.
	\end{align*}
	
	\begin{fact}\label{fact:eigenfunctions-eigenvalues}
		The functions $\varphi_{\mathbf{i}}$ as defined above form an orthonormal basis of eigenvectors for $-\Delta_n$, and their corresponding eigenvalues are $\lambda_{\mathbf{i}}$. 
	\end{fact}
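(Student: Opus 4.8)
The plan is to diagonalize $\Delta_n$ by reducing to the one-dimensional path graph and then exploiting the (normalized) tensor-product structure of the grid Laplacian. Write $P$ for the transition matrix of the simple random walk on $\Lambda_n$ killed at $\partial\Lambda_n$, so that $-\Delta_n = I - P$ per~\eqref{eq:Laplacian}, and let $P^{(1)}$ be the analogous transition matrix on the segment $\llb 1,n-1\rrb$ killed at $\{0,n\}$, with $\Delta^{(1)} := P^{(1)} - I$.

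First I would handle the segment. For each $1 \le i \le n-1$ and each $1 \le k \le n-1$, the identity $\sin((k-1)\theta) + \sin((k+1)\theta) = 2\cos\theta\,\sin(k\theta)$ with $\theta = i\pi/n$ gives $\tfrac12\big(\varphi_i(k-1) + \varphi_i(k+1)\big) = \cos(i\pi/n)\,\varphi_i(k)$; crucially, the boundary values $\varphi_i(0) = \varphi_i(n) = 0$ are exactly what is needed for this to remain valid at $k = 1$ and $k = n-1$, which is the content of the Dirichlet condition. Hence $-\Delta^{(1)}\varphi_i = (1 - \cos(i\pi/n))\varphi_i = \lambda_i \varphi_i$. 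Since $t \mapsto 1 - \cos t$ is strictly increasing on $[0,\pi]$, the eigenvalues $\lambda_1, \dots, \lambda_{n-1}$ are pairwise distinct, so the $n-1$ vectors $\varphi_i$ are linearly independent and thus form a basis of $\R^{n-1}$. Orthonormality is the classical evaluation $\sum_{k=1}^{n-1} \sin(ki\pi/n)\sin(kj\pi/n) = \tfrac{n}{2}\delta_{ij}$, obtained by writing the product of sines as a difference of cosines and summing the resulting geometric series; the normalization $\sqrt{2/n}$ is chosen precisely so that $\langle \varphi_i, \varphi_i\rangle = 1$.

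Next I would lift to two dimensions. The key algebraic step is that $P$ acts on a product function as a normalized sum of one-dimensional actions: for $x = (x_1,x_2) \in \Lambda_n$,
\begin{align*}
(P\varphi_{\mathbf i})(x) &= \tfrac14 \sum_{y\sim x} \varphi_{i_1}(y_1)\varphi_{i_2}(y_2) = \tfrac12\big(P^{(1)}\varphi_{i_1}\big)(x_1)\,\varphi_{i_2}(x_2) + \tfrac12\,\varphi_{i_1}(x_1)\big(P^{(1)}\varphi_{i_2}\big)(x_2) \\
&= \tfrac12\big(\cos(i_1\pi/n) + \cos(i_2\pi/n)\big)\varphi_{\mathbf i}(x)\,,
\end{align*}
where the first equality uses that $\varphi_{\mathbf i}$ (extended by the sine formula) vanishes on $\partial\Lambda_n$, so the sum over the four lattice neighbors of $x$ equals the sum over the neighbors inside $\Lambda_n$, and the second uses the one-dimensional eigenrelation twice. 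Therefore $-\Delta_n \varphi_{\mathbf i} = \big(1 - \tfrac12(\cos(i_1\pi/n) + \cos(i_2\pi/n))\big)\varphi_{\mathbf i} = \tfrac12(\lambda_{i_1} + \lambda_{i_2})\varphi_{\mathbf i} = \lambda_{\mathbf i}\varphi_{\mathbf i}$, so each $\varphi_{\mathbf i}$ is an eigenvector with eigenvalue $\lambda_{\mathbf i}$. Orthonormality factorizes, $\langle \varphi_{\mathbf i}, \varphi_{\mathbf j}\rangle = \langle \varphi_{i_1}, \varphi_{j_1}\rangle\langle \varphi_{i_2}, \varphi_{j_2}\rangle = \delta_{i_1 j_1}\delta_{i_2 j_2}$, by the one-dimensional orthonormality, and since there are $(n-1)^2 = |\Lambda_n|$ such functions, orthonormal (hence independent) in the $|\Lambda_n|$-dimensional space of functions on $\Lambda_n$, they form an orthonormal basis.

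There is no substantive obstacle here: this is the textbook diagonalization of a path-graph Laplacian together with the elementary observation that the grid Laplacian is a normalized tensor sum of two path Laplacians. The only spots that call for any care are bookkeeping the constant $\tfrac{n}{2}$ in the orthonormality sum (which fixes the normalization $\sqrt{2/n}$) and making sure the Dirichlet boundary values are invoked correctly so that the one-dimensional eigenrelation holds at the sites adjacent to $\partial\Lambda_n$.
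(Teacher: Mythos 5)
Your proof is correct and follows essentially the same tensor-product approach as the paper: reduce to the one-dimensional segment and then use the product structure of the grid Laplacian and of the inner product, concluding completeness by a dimension count. The only difference is that you derive the one-dimensional eigenrelation and orthonormality explicitly from trigonometric identities, whereas the paper simply cites these segment facts from references; the two-dimensional lift, the factorization of $\langle\varphi_{\mathbf i},\varphi_{\mathbf j}\rangle$, and the counting argument are the same.
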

	
	\begin{proof}
		We will use the fact that $(\varphi_i)_{i=1}^{n-1}$ are the normalized eigenfunctions on the line segment $\llb 1,n-1\rrb$ with corresponding eigenvalues $\lambda_i$: see e.g.,~\cite[Section A.4]{Lacoin-adjacent-transposition},~\cite[Section 8.2]{Lawler-RW-book}. Then the fact that $\varphi_{\mathbf i}$ are eigenfunctions of $\Lambda_n$ with eigenvalues $\lambda_{\mathbf{i}}$ follows from the following calculation.  
		\begin{align*}
		\Delta \varphi_{\mathbf i} (x) 
		& = \frac{1}{2} \sum_{j=1,2} \Big(\prod_{k\ne j} \varphi_{i_k}(x_k)\Big) \bigg[\frac 12 \sum_{\epsilon_j \in \{-e_j, e_j\}} \varphi_{i_j} (x_j + \epsilon_j) - \varphi_{i_j} (x_j)\bigg] \\
		& = \Big(\frac{1}{2} \sum_{j=1,2} \lambda_{i_j}\Big) \varphi_{\mathbf i}(x)\,.
		\end{align*}
		The fact that they are orthonormal follows from 
		\begin{align*}
		\sum_{x} \varphi_{\mathbf i}(x) \varphi_{\mathbf j}(x)  = \prod_{k=1,2} \sum_{x_k} \varphi_{i_k}(x_k)\varphi_{j_k}(y_k) = \prod_{k=1,2} \mathbf 1\{i_k = j_k\} = \mathbf 1\{\mathbf i = \mathbf j\}\,.
		\end{align*}
		Finally, the fact that this gives all the eigenfunctions of the Laplacian follows from a simple counting argument since this gives $(n-1)^2$ many orthogonal vectors, which equals $|\Lambda_n|$; at the same time $\Delta_n$ is evidently a $|\Lambda_n|\times |\Lambda_n|$ matrix. 
	\end{proof}

	\begin{fact}\label{fact:eigenvalue-lower-bound}
		We have for every $\mathbf{i}\in \{1,...,n-1\}^2$, $\lambda_{\mathbf{i}} \ge \Big(\frac{\sum_{j} i_j}{2}\Big) \lambda_{\mathbf{1}}$.
	\end{fact}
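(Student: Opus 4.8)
The statement to prove is Fact~\ref{fact:eigenvalue-lower-bound}: for every $\mathbf i = (i_1,i_2)\in\{1,\dots,n-1\}^2$, we have $\lambda_{\mathbf i}\ge\big(\tfrac{i_1+i_2}{2}\big)\lambda_{\mathbf 1}$, where $\lambda_{\mathbf i}=\tfrac12(\lambda_{i_1}+\lambda_{i_2})$ and $\lambda_i = 1-\cos(i\pi/n)$.

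\medskip

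\noindent\textbf{Proof proposal.} The plan is to reduce everything to a one-dimensional inequality. Since $\lambda_{\mathbf i}=\tfrac12(\lambda_{i_1}+\lambda_{i_2})$ and $\lambda_{\mathbf 1}=\lambda_1$ (because $\tfrac12(\lambda_1+\lambda_1)=\lambda_1$), it suffices to show the single-coordinate bound
\begin{equation}\label{eq:one-dim-eigenvalue}
\lambda_i \;\ge\; i\,\lambda_1 \qquad\text{for all } i\in\{1,\dots,n-1\}\,,
\end{equation}
and then add the two instances ($i=i_1$ and $i=i_2$) and divide by $2$: this gives $\lambda_{\mathbf i}=\tfrac12(\lambda_{i_1}+\lambda_{i_2})\ge\tfrac12(i_1+i_2)\lambda_1=\big(\tfrac{i_1+i_2}{2}\big)\lambda_{\mathbf 1}$, exactly as claimed. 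So the entire content is \eqref{eq:one-dim-eigenvalue}.

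\medskip

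\noindent To prove \eqref{eq:one-dim-eigenvalue}, rewrite $\lambda_i=1-\cos(i\pi/n)=2\sin^2(i\pi/(2n))$. Setting $\theta=\pi/(2n)\in(0,\pi/2)$, the claim becomes $\sin^2(i\theta)\ge i\sin^2(\theta)$ for $i=1,\dots,n-1$, i.e. for $i\theta\in(0,\pi/2)$. One clean route: first establish the sub-additivity/concavity-flavored inequality $\sin^2(a+b)\le \sin^2 a + \sin^2 b$ is the \emph{wrong} direction, so instead I would directly prove, by induction on $i$, that $\sin(i\theta)\ge \sqrt{i}\,\sin\theta$ — but square roots are awkward, so the cleanest is to prove \eqref{eq:one-dim-eigenvalue} directly by induction using the identity for $\lambda_i$ in terms of $\lambda_{i-1}$. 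Concretely, note the three-term recursion coming from $\cos((i)\theta\cdot 2)$: we have $1-\cos((i+1)\alpha) + 1-\cos((i-1)\alpha) = 2(1-\cos\alpha\cos(i\alpha))$ with $\alpha=\pi/n$, hence $\lambda_{i+1}+\lambda_{i-1}=2\lambda_i + 2\cos(i\alpha)\lambda_1 \cdot\frac{(1-\cos i\alpha)\cdots}{}$ — this is getting messy. A more transparent approach: use convexity. The function $g(x)=1-\cos(x\pi/n)$ on $[0,n]$ is convex on $[0,n/2\cdot \text{something}]$? Actually $g''(x)=(\pi/n)^2\cos(x\pi/n)$, which is nonnegative precisely for $x\in[0,n/2]$. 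For $i\le n/2$, convexity of $g$ together with $g(0)=0$ gives $g(i)\ge$ the chord? No — convexity with $g(0)=0$ gives $g(i)=g(i)\le \frac{i}{1}g(1)$? Convex functions lie \emph{below} chords, giving the wrong direction again. The right statement: for a convex $g$ with $g(0)=0$, the ratio $g(i)/i$ is \emph{increasing}, hence $g(i)/i\ge g(1)/1$, i.e. $g(i)\ge i\,g(1)$ — that is the correct direction and handles $1\le i\le n/2$. For $n/2< i\le n-1$, use the symmetry $\lambda_i=\lambda_{n-i}=1-\cos((n-i)\pi/n)$ combined with $g$ being increasing on... hmm, $\lambda_i$ is increasing in $i$ on $[0,n]$ actually (since $\cos(i\pi/n)$ decreases), so for $i>n/2$ we simply have $\lambda_i\ge\lambda_{n/2}=1\ge i\lambda_1$ provided $i\lambda_1\le 1$; but $i\lambda_1$ can be as large as $(n-1)\cdot\frac{\pi^2}{2n^2}(1+o(1))\approx\frac{\pi^2}{2n}\to 0$, so in fact $i\lambda_1\le 1$ holds comfortably for all $i\le n-1$ and all large $n$, which closes the $i>n/2$ range trivially.

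\medskip

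\noindent\textbf{Main obstacle.} The only delicate point is making the ``ratio $g(i)/i$ is increasing for convex $g$ with $g(0)=0$'' argument fully rigorous on the integer grid $i=1,\dots,\lfloor n/2\rfloor$ — this is elementary (it follows from $g(i)\ge \frac{i}{i+1}g(i+1)+\frac{1}{i+1}g(0)$, i.e. writing $i$ as a convex combination of $0$ and $i+1$), but one must check the convexity window $[0,n/2]$ matches the index range where it is invoked, and handle the boundary index $i=\lfloor n/2\rfloor$ against $i=\lceil n/2\rceil$ carefully, or else just verify the remaining large indices directly via the crude bound $\lambda_i\ge \tfrac12$ versus $i\lambda_1\le \tfrac{\pi^2}{2n}(n-1)\cdot\tfrac1n\cdot(1+o(1))\to 0$. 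Everything else is routine trigonometric manipulation. Since the paper elsewhere ignores rounding and integer effects, I would present the proof with that same license, invoking convexity of $x\mapsto 1-\cos(x\pi/n)$ on $[0,n/2]$ to get $\lambda_i\ge i\lambda_1$ for $i\le n/2$ and the monotonicity bound $\lambda_i\ge\lambda_{n/2}=1\ge i\lambda_1$ for $n/2<i\le n-1$, then summing the two one-dimensional inequalities and dividing by two.
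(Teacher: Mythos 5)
Your reduction to the one-dimensional inequality $\lambda_i \ge i\,\lambda_1$ followed by summing the two coordinate bounds and halving is exactly the paper's approach; the paper simply asserts $\lambda_i \ge i\lambda_1$ as a ``simple calculation'' with a reference to the one-dimensional literature, whereas you supply an actual proof (convexity of $g(x)=1-\cos(x\pi/n)$ on $[0,n/2]$ with $g(0)=0$ giving the increasing ratio $g(i)/i$, plus the crude monotonicity bound $\lambda_i\ge\lambda_{n/2}=1\ge i\lambda_1$ for $i>n/2$). The argument you ultimately settle on is correct, though the write-up is cluttered with abandoned false starts (including the incorrect interim claim $\lambda_i=\lambda_{n-i}$, which you rightly discard).
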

	\begin{proof}
		A simple calculation (see also~\cite{Lacoin-adjacent-transposition}) gives $\lambda_i \ge i \lambda_1$ for all $i$. Then 
		\[    \lambda_{\mathbf i} = \frac{1}{2} \sum_{j=1,2} \lambda_{i_j} \ge \frac 12 \sum_{j=1,2} i_j \lambda_1 = \Big(\frac{1}{2} \sum_{j=1,2} i_j\Big)\lambda_{\mathbf 1}\,. \qedhere
		\]
	\end{proof}
	
	\begin{proof}[\textbf{\emph{Proof of Fact~\ref{fact:order-of-magnitudes}}}]
		We begin with the expansion for $\lambda_\one$. By Taylor expanding, $\lambda_\one = \lambda_1  = \frac{\pi^2}{2n^2} + O(n^{-4})$. 
		The upper bounds on $\max_{x\in \Lambda_n}\varphi_\one(x)$ and $\max_{x\in \Lambda_n}\widehat \varphi_\one(x)$ follow by bounding sine by $1$. The lower bound on $\widehat \varphi_\one(x)$ for $x$ such that $d(x,\partial \Lambda_n)\ge n/4$ comes from the fact that $\sin(k\pi/n)$ is positive for all $k$, and $\Omega(1)$ for all $k\in [n/4,3n/4]$.  
	\end{proof}
	
	\subsection{Survival probability for the SRW on $\Lambda_n$}
	Here, we establish that the survival probability for the simple random walk on $\Lambda_n$ decays exponentially with rate $\lambda_\one$, and moreover, the constant in front of that decay is given by the normalized value of the top eigenvector of $-\Delta_n$.

	\begin{proof}[\textbf{\emph{Proof of Lemma~\ref{lem:survival-probability-srw}}}]
		We begin with the discrete-time bound. To express the survival probability  $\mathbb E[\mathbf 1\{Y_k^x \notin \partial \Lambda_n\}]$ let  $f = \mathbf 1_{\{\Lambda_n\}}$. Consider 
		\begin{align*}
		P^k f(x) := \mathbb E[\mathbf 1\{Y_k^x \in \Lambda_n  \}]\,.
		\end{align*}
		where $P$ is the transition matrix for the killed random walk on $\Lambda_n$. 
		Since $P = I + \Delta$, expanding this in the eigenbasis of the Laplacian,
		\begin{align*}
		P^k f(x) = \sum_{\mathbf i} (1-\lambda_{\mathbf i})^k \langle \mathbf 1_{\{\Lambda_n  \}}, \varphi_{\mathbf i} \rangle \varphi_{\mathbf i}(x)\,,
		\end{align*}
		where the inner product is in $\ell^2(\Lambda_n)$, i.e., $\langle f,g\rangle = \sum_{x\in \Lambda_n} f(x)g(x)$. 
		For the upper bound, using the inequality $1-x\le e^{-x}$, and the positivity of $\widehat \varphi_\one$, we have the upper bound 
		\begin{align*}
		P^k f(x) \le  \widehat \varphi_\one(x) (1-\lambda_\one)^{k}  + \sum_{\mathbf i\ne \one} e^{ - \lambda_{\mathbf i} k} |\langle \mathbf 1_{\{\Lambda_n  \}}, \varphi_{\mathbf i} \rangle| |\varphi_{\mathbf i}(x)|\,.
		\end{align*}
		We also have the lower bound 
		\begin{align*}
		P^k f(x) & \ge \widehat \varphi_\one(x) (1-\lambda_\one)^k \  - \sum_{\mathbf i\ne \mathbf 1} e^{ - \lambda_{\mathbf i} k} |\langle \mathbf 1_{\{\Lambda_n  \}}, \varphi_{\mathbf i}\rangle| | \varphi_{\mathbf i}(x)| \\
			& \ge \widehat \varphi_\one (x) (1-\lambda_\one)^k - \sum_{\mathbf i\ne \mathbf 1} e^{ - \lambda_{\mathbf i} k} |\langle \mathbf 1_{\{\Lambda_n  \}}, \varphi_{\mathbf i}\rangle| | \varphi_{\mathbf i}(x)|\,.
		\end{align*}
		
		It remains to show that the contribution from the $\mathbf i \ne \one$ terms are $o(e^{ - \lambda_\one k})$. Towards that, let $\widehat \varphi_{\mathbf i}(x) : = \varphi_{\mathbf i}(x) \sum_{y} \varphi_{\mathbf i}(y)$, and notice that by the same reasoning as for $\widehat \varphi_{\mathbf 1}(x) =O(1)$ from Fact~\ref{fact:order-of-magnitudes},  we get $\widehat \varphi_{\mathbf i}(x) = O(1)$ uniformly over all $x\in\Lambda_n$. Thus, we have that for all $x\in \Lambda_n$,  
		\begin{align*}
		\sum_{\mathbf i\ne \mathbf 1} e^{ - \lambda_{\mathbf i} k} |\langle \mathbf 1_{\{\Lambda_n  \}}, \varphi_{\mathbf i}\rangle| | \varphi_{\mathbf i}(x)| \le C \sum_{\mathbf i\ne \mathbf 1} e^{ - \lambda_{\mathbf i} k}\,.
		\end{align*}
		Now, by Fact~\ref{fact:eigenvalue-lower-bound}, we have 
		\begin{align*}
		\sum_{\mathbf i\ne \one} e^{- (\frac{1}{2} \sum_{j=1,2} i_j )\lambda_{\mathbf 1}k}  = \sum_{r \in \frac{1}{2} \mathbb Z: r\ge 1+ \frac 12} \,\, \sum_{\mathbf i: \frac{1}{2} \sum_{j=1,2} i_j = r}  e^{ - r \lambda_{\mathbf 1} k}  \le   \sum_{r \in \frac{1}{2} \mathbb Z: r\ge 1+ \frac 12} (2r)^2 e^{ - r\lambda_{\mathbf 1}k}\,.
		\end{align*}
		Using, say, summation by parts, we find that the above is at most 
		$$C ( r^2 + {(\lambda_{\mathbf 1} k)^{-2}}) e^{- r\lambda_{\mathbf 1}k}\Big|_{r=1+\frac 12}^{n}\le Ce^{ - \lambda_\one k} e^{ - \lambda_\one k/2}\,.$$ 
		This is $o(e^{ - \lambda_\one k})$ as desired, since $\lambda_{\mathbf 1}k$ is diverging when $n^2 = o(k)$ by Fact~\ref{fact:order-of-magnitudes}.
		
		The continuous-time bound follows essentially the same reasoning, with $P^k$ replaced by $e^{ - t \Delta}$, which automatically replaces $(1-\lambda_{\mathbf{i}})^k$ by $e^{ - \lambda_{\mathbf{i}}k}$ in the above.
	\end{proof}

	\bibliographystyle{abbrv}
	\bibliography{references}

\begin{thebibliography}{10}

\bibitem{Aldous}
D.~Aldous.
\newblock Random walks on finite groups and rapidly mixing markov chains.
\newblock In J.~Az{\'e}ma and M.~Yor, editors, {\em S{\'e}minaire de
  Probabilit{\'e}s XVII 1981/82}, pages 243--297, Berlin, Heidelberg, 1983.
  Springer Berlin Heidelberg.

\bibitem{Aldous-Diaconis}
D.~Aldous and P.~Diaconis.
\newblock Shuffling cards and stopping-times.
\newblock {\em American Mathematical Monthly}, 93:333--348, 1986.

\bibitem{Aldous-Fill}
D.~Aldous and J.~A. Fill.
\newblock Reversible {M}arkov chains and random walks on graphs, 2002.
\newblock Unfinished monograph, recompiled 2014, available at
  {http://www.stat.berkeley.edu/$\sim$aldous/RWG/book.html}.

\bibitem{BaBo}
R.~Bauerschmidt and T.~Bodineau.
\newblock Spectral gap critical exponent for {G}lauber dynamics of hierarchical
  spin models.
\newblock {\em Communications in Mathematical Physics}, 373(3):1167--1206,
  2020.

\bibitem{Berestycki-notes}
N.~Berestycki.
\newblock Introduction to the {G}aussian free field and {L}iouville {Q}uantum
  {G}ravity.
\newblock 2016.

\bibitem{Biskup-notes}
M.~Biskup.
\newblock Extrema of the two-dimensional discrete {G}aussian free field.
\newblock In M.~T. Barlow and G.~Slade, editors, {\em Random Graphs, Phase
  Transitions, and the Gaussian Free Field}, pages 163--407, Cham, 2020.
  Springer International Publishing.

\bibitem{BCL22}
J.~Boursier, D.~Chafa{\"\i}, and C.~Labb{\'e}.
\newblock Universal cutoff for {D}yson {O}rnstein {U}hlenbeck process.
\newblock {\em Probability Theory and Related Fields}, 2022.

\bibitem{Bufetov-ASEP}
A.~Bufetov and P.~Nejjar.
\newblock Cutoff profile of {ASEP} on a segment.
\newblock {\em Probability Theory and Related Fields}, 183(1):229--253, 2022.

\bibitem{CLL-mixing-simplex}
P.~Caputo, C.~Labb{\'e}, and H.~Lacoin.
\newblock {Mixing time of the adjacent walk on the simplex}.
\newblock {\em The Annals of Probability}, 48(5):2449 -- 2493, 2020.

\bibitem{CLL-nabla-phi}
P.~Caputo, C.~Labb{\'e}, and H.~Lacoin.
\newblock {Spectral gap and cutoff phenomenon for the Gibbs sampler of
  $\nabla\varphi$ interfaces with convex potential}.
\newblock 38 pages, Nov. 2020.

\bibitem{CLMST}
P.~Caputo, E.~Lubetzky, F.~Martinelli, A.~Sly, and F.~L. Toninelli.
\newblock {Dynamics of $(2+1)$-dimensional SOS surfaces above a wall: Slow
  mixing induced by entropic repulsion}.
\newblock {\em The Annals of Probability}, 42(4):1516 -- 1589, 2014.

\bibitem{CMST-zero-temperature-3d-ising}
P.~Caputo, F.~Martinelli, F.~Simenhaus, and F.~L. Toninelli.
\newblock ``{Z}ero" temperature stochastic {3D} {I}sing model and dimer
  covering fluctuations: {A} first step towards interface mean curvature
  motion.
\newblock {\em Communications on Pure and Applied Mathematics}, 64(6):778--831,
  2011.

\bibitem{CMT-repulsion}
P.~Caputo, F.~Martinelli, and F.~Toninelli.
\newblock {On the Approach to Equilibrium for a Polymer with Adsorption and
  Repulsion}.
\newblock {\em Electronic Journal of Probability}, 13(none):213 -- 258, 2008.

\bibitem{CPT-monotone-surfaces}
P.~Caputo, F.~Martinelli, and F.~L. Toninelli.
\newblock Mixing times of monotone surfaces and {SOS} interfaces: A mean
  curvature approach.
\newblock {\em Communications in Mathematical Physics}, 311(1):157--189, 2012.

\bibitem{Caravenna-Rongfeng-Zygouras}
F.~Caravenna, R.~Sun, and N.~Zygouras.
\newblock {The two-dimensional KPZ equation in the entire subcritical regime}.
\newblock {\em The Annals of Probability}, 48(3):1086 -- 1127, 2020.

\bibitem{ChatterjeeRandom}
S.~Chatterjee.
\newblock Superconcentration in surface growth, 2021.

\bibitem{Chatterjee-Dunlap}
S.~Chatterjee and A.~Dunlap.
\newblock {Constructing a solution of the $(2+1)$-dimensional KPZ equation}.
\newblock {\em The Annals of Probability}, 48(2):1014 -- 1055, 2020.

\bibitem{corwin2}
I.~Corwin.
\newblock The {K}ardar--{P}arisi--{Z}hang equation and universality class.
\newblock {\em Random matrices: Theory and applications}, 1(01):1130001, 2012.

\bibitem{Diaconis-PNAS}
P.~Diaconis.
\newblock The cutoff phenomenon in finite {M}arkov chains.
\newblock {\em Proceedings of the National Academy of Sciences},
  93(4):1659--1664, 1996.

\bibitem{log-variance-square-ice}
H.~Duminil-Copin, M.~Harel, B.~Laslier, A.~Raoufi, and G.~Ray.
\newblock Logarithmic variance for the height function of square-ice, 2019.

\bibitem{DC-delocalization-six-vertex}
H.~Duminil-Copin, A.~Karrila, I.~Manolescu, and M.~Oulamara.
\newblock Delocalization of the height function of the six-vertex model, 2020.

\bibitem{Edwards-Wilkinson}
S.~F. Edwards and D.~R. Wilkinson.
\newblock The surface statistics of a granular aggregate.
\newblock {\em Proceedings of the Royal Society of London. A. Mathematical and
  Physical Sciences}, 381(1780):17--31, 1982.

\bibitem{FerrariNiederhauser}
P.~A. Ferrari and B.~M. Niederhauser.
\newblock {Harness processes and harmonic crystals}.
\newblock {\em Stochastic Processes and their Applications}, 116(6):939--956,
  June 2006.

\bibitem{Frohlich-Spencer}
J.~Fr{\"o}hlich and T.~Spencer.
\newblock The {K}osterlitz-{T}houless transition in two-dimensional {A}belian
  spin systems and the {C}oulomb gas.
\newblock {\em Communications in Mathematical Physics}, 81(4):527--602, 1981.

\bibitem{Funaki-Spohn}
T.~Funaki and H.~Spohn.
\newblock Motion by mean curvature from the {G}inzburg-{L}andau interface
  model.
\newblock {\em Communications in Mathematical Physics}, 185(1):1--36, 1997.

\bibitem{hairer-SPDE-notes}
M.~Hairer.
\newblock An introduction to stochastic {PDE}s, 2009.

\bibitem{KPZ}
M.~Kardar, G.~Parisi, and Y.-C. Zhang.
\newblock Dynamic scaling of growing interfaces.
\newblock {\em Phys. Rev. Lett.}, 56:889--892, Mar 1986.

\bibitem{Kenyon-dominoes-free-field}
R.~Kenyon.
\newblock {Dominos and the Gaussian Free Field}.
\newblock {\em The Annals of Probability}, 29(3):1128 -- 1137, 2001.

\bibitem{LL-WASEP}
C.~Labb{\'e} and H.~Lacoin.
\newblock Mixing time and cutoff for the weakly asymmetric simple exclusion
  process, 2018.

\bibitem{Labbe-Lacoin-ASEP}
C.~Labbe and H.~Lacoin.
\newblock {Cutoff phenomenon for the asymmetric simple exclusion process and
  the biased card shuffling}.
\newblock {\em The Annals of Probability}, 47(3):1541 -- 1586, 2019.

\bibitem{Lacoin-adjacent-transposition}
H.~Lacoin.
\newblock {Mixing time and cutoff for the adjacent transposition shuffle and
  the simple exclusion}.
\newblock {\em The Annals of Probability}, 44(2):1426 -- 1487, 2016.

\bibitem{Lacoin-cutoff-circle}
H.~Lacoin.
\newblock {The cutoff profile for the simple exclusion process on the circle}.
\newblock {\em The Annals of Probability}, 44(5):3399 -- 3430, 2016.

\bibitem{Lacoin-diffusive-window}
H.~Lacoin.
\newblock {The simple exclusion process on the circle has a diffusive cutoff
  window}.
\newblock {\em Annales de l'Institut Henri Poincar{\'e}, Probabilit{\'e}s et
  Statistiques}, 53(3):1402 -- 1437, 2017.

\bibitem{LST-zero-temperature-2d-ising}
H.~Lacoin, F.~Simenhaus, and F.~L. Toninelli.
\newblock {Zero-temperature 2D stochastic Ising model and anisotropic
  curve-shortening flow}.
\newblock {\em {J. Eur. Math. Soc. (JEMS) 6}}, 16(12):incoonu, Dec. 2014.

\bibitem{Laslier-Toninelli}
B.~Laslier and F.~L. Toninelli.
\newblock Lozenge tiling dynamics and convergence to the hydrodynamic equation.
\newblock {\em Communications in Mathematical Physics}, 358(3):1117--1149,
  2018.

\bibitem{Lawler-RW-book}
G.~Lawler and V.~Limic.
\newblock {\em Random Walk: A Modern Introduction}.
\newblock Cambridge Studies in Advanced Mathematics. Cambridge University
  Press, 2010.

\bibitem{LP}
D.~Levin and Y.~Peres.
\newblock {\em Markov Chains and Mixing Times (2nd ed.)}.
\newblock American Mathematical Society, Providence, RI, 2017.

\bibitem{Magnen-Unterberger}
J.~Magnen and J.~Unterberger.
\newblock The scaling limit of the {KPZ} equation in space dimension 3 and
  higher.
\newblock {\em Journal of Statistical Physics}, 171(4):543--598, 2018.

\bibitem{Miller-Ginzburg-landau}
J.~Miller.
\newblock Fluctuations for the {G}inzburg-{L}andau $\nabla\varphi$ interface
  model on a bounded domain.
\newblock {\em Communications in Mathematical Physics}, 308(3):591--639, 2011.

\bibitem{Nishikawa}
T.~Nishikawa.
\newblock Hydrodynamic limit for the {G}inzburg-{L}andau {$\nabla \phi$}
  interface model with boundary conditions.
\newblock {\em Probability Theory and Related Fields}, 127:205--227--, 2003.

\bibitem{Sheffield-GFF-notes}
S.~Sheffield.
\newblock Gaussian free fields for mathematicians.
\newblock {\em Probability Theory and Related Fields}, 139(3):521--541, 2007.

\bibitem{Toninelli-ICM-review}
F.~Toninelli.
\newblock {(2+1)-dimensional interface dynamics: mixing time, hydrodynamic
  limit and Anisotropic KPZ growth}.
\newblock In {\em {International Congress of Mathematicians 2018}}, volume~2 of
  {\em Proceedings of the International Congress of Mathematicians 2018, Rio de
  Janeiro}, pages 2719--2744, Rio de Janeiro, Brazil, Aug. 2018.

\bibitem{Wilson}
D.~B. Wilson.
\newblock {Mixing times of lozenge tiling and card shuffling Markov chains}.
\newblock {\em The Annals of Applied Probability}, 14(1):274 -- 325, 2004.

\bibitem{Yang-repulsion}
S.~Yang.
\newblock {Cutoff for polymer pinning dynamics in the repulsive phase}.
\newblock {\em Annales de l'Institut Henri Poincare, Probabilites et
  Statistiques}, 57(3):1306 -- 1335, 2021.

\end{thebibliography}

\end{document}